\numberwithin{equation}{section}
\theoremstyle{plain}
\newtheorem{thm}{Theorem}[section]
\newtheorem{prop}[thm]{Proposition}
\newtheorem{cor}[thm]{Corollary}
\newtheorem{lem}[thm]{Lemma}
\theoremstyle{definition}
\newtheorem{exa}[thm]{Example}
\newtheorem{rem}[thm]{Remark}
\newtheorem{defi}[thm]{Definition}
\newcommand{\real}{\mathbb{R}}
\DeclareMathOperator*{\comp}{\mathbb{C}}
\DeclareMathOperator*{\nat}{\mathbb{N}}
\DeclareMathOperator*{\im}{\text{\normalfont Im}}
\newcommand{\id}{\mathcal{ID}}
\newcommand{\utimes}{\kern0.05em\buildrel{\times}\over{\rule{0em}{0.004em}} \kern-0.9em\cup \kern0.2em}
\newcommand{\putimes}{\mathop{\kern0.05em\buildrel{\times}\over{\rule{0em}{0.0em}} \kern-0.9em\cup \kern0em}}
\newcommand{\hutimes}{\mathop{\kern0.02em\buildrel{\times}\over{\rule{0em}{0.0em}} \kern-0.48em\cup \kern0em}}
\newcommand{\sutimes}{\mathrel{\kern0em \buildrel{\mathsf{x}}\over{\rule{0em}{0.0em}} \kern-0.35em\cup \kern-0.0em}}
\newcommand{\tor}{\mathbb{T}}
\newcommand{\disc}{\mathbb{D}}
\begin{document}
\title{Semigroups related to additive and multiplicative, free and Boolean convolutions}

\author{Octavio Arizmendi\footnote{Supported by funds of R.\ Speicher from the Alfried Krupp von Bohlen und Halbach Stiftung. E-mail: arizmendi@math.uni-sb.de} \\ Universit\"{a}t des Saarlandes, FR $6.1-$Mathematik,\\ 66123 Saarbr\"{u}cken, Germany \\ \\ Takahiro Hasebe\footnote{Supported by Global COE Program at Kyoto University. E-mail: thasebe@math.kyoto-u.ac.jp}\\Graduate School of Science,  Kyoto University,\\  Kyoto 606-8502, Japan }   
\date{}
\maketitle
\begin{abstract}
Belinschi and Nica introduced a composition semigroup of maps on the set of probability measures. Using this semigroup, they introduced a free divisibility indicator, from which one can know quantitatively if a measure is freely infinitely divisible or not. 

In the first half of the paper, we further investigate this indicator: we calculate how the indicator changes with respect to free and Boolean powers; we prove that free and Boolean $1/2$-stable laws have free divisibility indicators equal to infinity;
we derive an upper bound of the indicator in terms of Jacobi parameters. This upper bound is achieved only by free Meixner distributions. We also prove Bo\.zejko's conjecture which says the Boolean powers $\mu^\uplus$, $t\in[0,1],$ of a probability measure $\mu$ are freely infinitely divisible if the measure $\mu$ is freely infinitely divisible.

In the other half of this paper, we introduce an analogous composition semigroup for multiplicative convolutions and define free divisibility indicators for these convolutions. Moreover, we prove that a probability measure on the unit circle is freely infinitely divisible relative to the free multiplicative convolution if and only if the indicator is not less than one. We also prove how the multiplicative divisibility indicator changes under free and Boolean powers and then the multiplicative analogue of Bo\.zejko's conjecture. We include an appendix, where the Cauchy distributions and point measures are shown to be the only fixed points of the Boolean-to-free Bercovici-Pata bijection. 

\end{abstract}

Mathematics Subject Classification 2010: 46L54, 46L53, 30D05

Keywords: Free convolution, Boolean convolution, free divisibility indicator

\section{Introduction}
The class of infinitely divisible distributions has been a central theme in probability theory because they appear as the laws of L\'evy processes. The same is true for free probability, with  L\'evy processes replaced by free L\'evy processes~\cite{Biane}. 
Infinitely divisible distributions in free probability also arise from the eigenvalue distributions of infinitely divisible random matrices \cite{Ben05,Cab05}. 
Important distributions among them are Wigner's semicircle law and the free Poisson law that respectively appear in the eigenvalue distributions of Gaussian unitary ensembles and Wishart matrices. We will develop the theory of free infinite divisibility in this paper.   

For probability measures $\mu, \nu$ on $\real$, let us denote by $\mu \boxplus \nu$ the free additive convolution. $\mu \boxplus \nu$ is the distribution of $X + Y$, where $X$ and $Y$ are self-adjoint free random variables with distributions $\mu$ and $\nu$, respectively.
A probability measure $\mu$ is said to be freely infinitely divisible if for any $n \in \nat$, there exists $\mu_n$ such that $\mu = \mu_n \boxplus \cdots \boxplus \mu_n = \mu_n^{\boxplus n}$, the free convolution of $\mu_n$ by $n$ times.
A striking property of free convolution is that for any $t \geq 1$ and any probability measure $\mu$, the convolution power $\mu^{\boxplus t}$ exists as a probability measure.
This is in contrast to probability theory, because the usual convolution $\mu^{\ast t}$ is not  necessarily defined even for $t \geq 1$, unless $\mu$ is $\ast$-infinitely divisible; the reader is referred to page 2 of \cite{S-V}. Hence, in free probability, the half line $(0,\infty)$ for time parameter is simply divided into two connected components (intervals) by checking whether $\mu^{\boxplus t}$ exists or not. The boundary of these distinct intervals is given by the quantity $\widetilde{\phi}(\mu):= \inf \{t> 0: \mu^{\boxplus t} \text{~exists as a probability measure}\}$. A probability measure $\mu$ is freely infinitely divisible if and only if $\widetilde{\phi}(\mu)=0$.

Another famous convolution arising from non-commutative random variables is a Boolean additive convolution $\mu \uplus \nu$, which is defined as the probability distribution of $X + Y$ for self-adjoint, Boolean independent $X$ and $Y$ with distributions $\mu$ and $\nu$, respectively. It is known that the power $\mu^{\uplus t}$ can be defined for any probability measure $\mu$ and any $t \geq 0$. Thus a difference among classical, free and Boolean convolutions can be observed in existence of convolution powers.  

Using the above properties of free and Boolean convolutions, Belinschi and Nica~\cite{Bel2} introduced a family of maps 
$$
\mathbb{B}_t: \mu \mapsto \left(\mu^{\boxplus (1+t)}\right)^{\uplus \frac{1}{1+t}},~~t\geq 0,  
$$
which turns out to satisfy the semigroup property $\mathbb{B}_t(\mathbb{B}_s(\mu))=\mathbb{B}_{t+s}(\mu)$. 
By looking at the image of the map $\mathbb{B}_t$, the so-called free divisibility indicator $\phi(\mu)$ can be defined for a probability measure $\mu$. 
Unexpectedly, this quantity coincides with $1-\widetilde{\phi}(\mu)$ if $\mu$ is not freely infinitely divisible. 

The explicit calculation of this indicator is expected to be useful to understand the free convolution and free infinite divisibility. In that direction, we prove the relation
\[
\phi(\mu^{\uplus t}) = \frac{\phi(\mu)}{t} \text{~~for~~} t > 0.
\]


As a byproduct, we find a different characterization of the indicator in terms of Boolean convolution powers: $\phi(\mu)=\sup \{t \geq 0: \mu^{\uplus t} \text{~is freely infinitely divisible}\}$.
This new characterization enables us to interpret the indicator quite naturally in terms of Boolean convolutions.

Another consequence of this relation is that $\mu^{\uplus t}$ is freely infinitely divisible for $0 \leq t \leq 1$ whenever $\mu$ is freely infinitely divisible. This has been conjectured by Bo\.{z}ejko from many calculations~\cite{BW2}.

Also, we give an upper bound of the indicator in terms of Jacobi parameters for any probability measure with a finite fourth moment. Moreover, we show that only free Meixner distributions achieve that upper bound.

Delta and Cauchy distributions have free divisibility indicators equal to infinity. This is because they are the fixed points of the semigroup $\mathbb{B}_t$~\cite{Bel2}. They had been the only known examples whose free divisibility indicators are infinity. We will provide other ones: free stable laws and Boolean strictly stable laws of index $1/2$. However, these examples are not fixed points of the semigroup of homomorphisms. To get these results, we prove that the free divisibility indicator is invariant under shifts and the Boolean convolution with a delta measure. Moreover, we prove in Appendix that the Cauchy distributions and delta measures are the only fixed points of the homomorphisms.  

A multiplicative free convolution $\boxtimes$ and Boolean convolution $\utimes$ and their associated infinite divisibility, which we explain more in Section \ref{sec1}, can be defined on the positive real line and on the unit circle. So, in relation to multiplicative free infinite divisibility, a natural question is if a divisibility indicator and a composition semigroup exist for these convolutions.

The main subject of the second part of this paper is the existence of a counterpart of the semigroup of \cite{Bel2} for the multiplicative convolutions. Remarkably, the same commutation relation used in the additive case is true for multiplicative convolutions.
In contrast to the additive convolutions, some difficulties appear for multiplicative convolutions on the positive real line and on the unit circle. On the unit circle, the problem is the non-uniqueness of convolution powers: neither $\mu^{\boxtimes t}$ nor $\mu^{\hutimes t}$ can be defined uniquely~\cite{Bel3,Fra2}. On the positive real line, a difficulty comes from the fact that $\mu^{\hutimes t}$ cannot be defined for large $t$~\cite{Ber2}. We, however, manage to define composition semigroups and free divisibility indicators for multiplicative convolutions, following the additive case.

Moreover, we prove several results analogous to the additive case. We define free divisibility indicators for these convolutions. For instance, a probability measure on the unit circle is freely infinitely divisible concerning the multiplicative free convolution if and only the indicator is not less than one. We also prove how the multiplicative divisibility indicator changes under free and Boolean powers and then the multiplicative analogue of Bo\.zejko's conjecture.

This paper is organized as follows. In Section \ref{sec1} we explain on additive and multiplicative convolutions, both free and Boolean. Section \ref{sec2} is devoted to the study of the additive free divisibility indicator. In Section \ref{sec3} we develop the multiplicative counterparts of composition semigroups.
Combining results of \cite{Bel2} and Section \ref{sec3}, we have simple commutation relations between various pairs of convolutions: the additive free convolution and the additive Boolean one; the multiplicative free convolution and the multiplicative Boolean one. In Section \ref{sec4}, we prove more commutation relations between additive convolutions and multiplicative convolutions. These properties provide new examples of freely infinitely divisible distributions with respect to both multiplicative and additive free convolutions. In the Appendix, the Cauchy distributions and point measures are shown to be the only fixed points of the Boolean-to-free Bercovici-Pata bijection.

\section{Preliminaries} \label{sec1}
\subsection{Additive free convolution}
Let $\mathcal{P}(\real)$ denote the set of the probability measures on $\real$. The upper half-plane and the lower half-plane are respectively denoted as $\comp^+$ and $\comp^-$.
In this article, $G_\mu:\comp^+\rightarrow\comp^-$ denotes the Cauchy transform $G_\mu(z):=\int_{\real}\frac{\mu(dx)}{z-x}$ of a probability measure $\mu$ and
$F_\mu:\comp^+\rightarrow\comp^+$ denotes its reciprocal $\frac{1}{G_\mu(z)}$.
An additive free convolution was introduced by Voiculescu in \cite{Voi86} for compactly supported measures and later extended to all probability measures in \cite{Be-Vo}.
Let $\phi_\mu(z)$ be the Voiculescu transform of $\mu$ defined by
\[
\phi_\mu(z) = F_\mu^{-1}(z)-z
\]
for $z$ in a suitable open set of $\comp^+$. The free convolution $\mu \boxplus \nu$ of probability measures $\mu$ and $\nu$ is characterized by $\phi_{\mu \boxplus \nu}(z) = \phi_\mu (z) + \phi_\nu(z)$ for $z$ in a common domain where $\phi_\mu (z)$ and $\phi_\nu(z)$ are defined. As we mentioned in Introduction, the measure $\mu^{\boxplus t} \in\mathcal{P}(\real)$, satisfying $\phi_{\mu^{\boxplus t}}(z) = t\phi_\mu(z)$,  exists for any $ t \geq 1$ and $\mu \in\mathcal{P}(\real)$.

The free infinite divisibility is characterized in terms of $\phi_\mu$ \cite{Be-Vo}. 
\begin{thm} \label{thmid}The following are equivalent for a probability measure $\mu \in \mathcal{P}(\real)$. 
\begin{enumerate}[\rm(1)]
\item The measure $\mu$ is freely ($\boxplus$- for short) infinitely divisible. 
\item The Voiculescu transform $\phi_\mu$ extends analytically to $\comp^+$ with values in $\comp^- \cup \real$. 
\item  For each $0 < t < \infty$, a probability measure $\mu^{\boxplus t}$ exists satisfying $\phi_{\mu^{\boxplus t}}(z) = t\phi_\mu(z)$. 
\item  The L\'{e}vy-Khintchine representation exists: 
\[
\phi_\mu(z) = \gamma_\mu +\int_{\real}\frac{1+xz}{z-x}\tau_\mu(dx), 
\]
where $\gamma_\mu \in \real$ and $\tau_\mu$ is a non-negative finite Borel measure.
\end{enumerate}
\end{thm}
An infinitely divisible distribution can also be characterized as a weak limit of infinitesimal triangular arrays~\cite{C-G}.

\subsection{Multiplicative free convolutions}

Let $\real_+$, $\disc$ and $\tor$ denote the positive real line $[0,\infty)$, the unit disc $\{z \in \comp: |z| <1 \}$ and the unit circle of the complex plane, respectively. Moreover, let $\mathcal{P}(\real_+)$ and $\mathcal{P}(\tor)$ denote the sets of probability measures on $\real_+$ and $\tor$, respectively.
For probability measures $\mu,\nu$ on $\tor$ and $\real_+$, multiplicative free convolutions $\mu \boxtimes \nu$ on $\tor$ and $\real_+$ were respectively introduced in \cite{Voi87} for compactly supported probability measures. The measure $\mu \boxtimes \nu$ on $\tor$ is the distribution
 of $UV$, where $U$ and $V$ are unitary free random variables with distributions $\mu$ and $\nu$, respectively. Similarly, $\mu \boxtimes \nu$ on $\real_+$ is defined as the
 distribution of $X^{1/2}YX^{1/2}$, where $X$ and $Y$ are positive free random variables with distributions $\mu$ and $\nu$, respectively.
The multiplicative convolution of probability measures on $\real_+$ with non-compact supports was considered in \cite{Be-Vo}.

A probability measure $\mu$ on $\tor$ (resp.,\ $\real_+$) is said to be $\boxtimes$-infinitely divisible if for any $n \in \nat$, there is $\mu_n$ on $\tor$ (resp.,\ $\real_+$) such that $\mu=\mu_n ^{\boxtimes n} = \mu_n \boxtimes \cdots  \boxtimes \mu_n$.

To investigate multiplicative convolutions, an important transform is
\[
\eta_\mu(z)=\frac{\psi_\mu(z)}{1+\psi_\mu(z)}
\]
for $\mu \in \mathcal{P}(\real_+)$ or $\mathcal{P}(\tor)$, where $\psi_\mu(z)$ is a moment generating function defined by $\int_{\text{supp~} \mu} \frac{tz}{1-tz}\mu(dt)$. Note that for $\mu\in \mathcal{P}(\real_+)$, $\eta_\mu = 0$ if and only if $\mu=\delta_0$ and for  $\mu\in \mathcal{P}(\tor)$, $\eta_\mu = 0$ if and only if $\mu=\omega$, the normalized Haar measure on $\tor$.   The transform $\eta_\mu$ is characterized as follows \cite{Bel3}.

\begin{prop}\label{prop0}
(1) Let $\eta: \comp \backslash \real_+ \to \comp$ be an analytic map satisfying $\eta(\overline{z}) = \overline{\eta(z)}$ and $\eta \neq 0$. Then the following properties are equivalent. \\
(1a) $\eta =\eta_\mu$ for a probability measure $\mu \in \mathcal{P}(\real_+)$, $\mu \neq \delta_0$. \\
(1b) $\eta(-0) = 0$ and $\arg \eta(z) \in [\arg z, \pi)$ for any $z \in \comp^+$.

(2) Let $\eta: \disc \to \comp$ be an analytic map. Then the following properties are equivalent. \\
(2a) $\eta =\eta_\mu$ for a probability measure $\mu \in \mathcal{P}(\tor)$. \\
(2b) $|\eta(z)| \leq |z|$ for $z \in \disc$.
\end{prop}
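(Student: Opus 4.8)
The plan is to realize $\eta_\mu$ as the image of a canonical ``positive-type'' analytic function under a fractional-linear (Cayley) transformation, so that each direction reduces to a classical representation theorem. The basic algebraic input is the pair of identities obtained by splitting the integrand of $\psi_\mu$: for $\mu$ on $\real_+$ or $\tor$ one has
\[
1+\psi_\mu(z)=\int\frac{1}{1-tz}\,\mu(dt),\qquad \frac{\psi_\mu(z)}{z}=\int\frac{t}{1-tz}\,\mu(dt),
\]
so that $\eta_\mu=\psi_\mu/(1+\psi_\mu)$ is a fractional-linear function of $\psi_\mu$, while $\eta_\mu(z)/z$ is the ratio of the two integrals above. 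I would treat the circle case (ii) first, as it is the cleaner model.

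For (ii) I would set $h_\mu(z)=1+2\psi_\mu(z)=\int_\tor\frac{1+\zeta z}{1-\zeta z}\,\mu(d\zeta)$, whose real part is the Poisson kernel, $\re h_\mu(z)=\int\frac{1-|z|^2}{|1-\zeta z|^2}\,\mu(d\zeta)>0$ on $\disc$; thus $h_\mu$ maps $\disc$ into the open right half-plane with $h_\mu(0)=1$. A direct computation gives $\eta_\mu=(h_\mu-1)/(h_\mu+1)$, the Cayley transform of the right half-plane onto $\disc$ sending $1$ to $0$; hence $\eta_\mu\colon\disc\to\disc$ with $\eta_\mu(0)=0$, and the Schwarz lemma yields $|\eta_\mu(z)|\le|z|$, which is the implication (1)$\Rightarrow$(2). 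For the converse, given analytic $\eta\colon\disc\to\comp$ with $|\eta(z)|\le|z|$, I would form $h=(1+\eta)/(1-\eta)$, which maps $\disc$ into the right half-plane with $h(0)=1$; the Herglotz representation of functions with positive real part produces a probability measure $\mu$ on $\tor$ with $h=h_\mu$, and inverting the Cayley transform gives $\eta=\eta_\mu$.

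For (i) the forward direction splits into the two one-sided bounds on $\arg\eta_\mu$. Since $z\mapsto tz/(1-tz)$ maps $\comp_+$ into $\comp_+$ for every $t>0$, the average $\psi_\mu$ maps $\comp_+$ into $\comp_+$ (strictly, using $\mu\neq\delta_0$), and then $u\mapsto u/(1+u)$ keeps us in $\comp_+$, giving $\eta_\mu(\comp_+)\subseteq\comp_+$, i.e.\ $\arg\eta_\mu(z)<\pi$. The lower bound $\arg\eta_\mu(z)\ge\arg z$ is the one nontrivial analytic point: writing $\eta_\mu(z)/z=A(z)/B(z)$ with $A=\int\frac{t}{1-tz}\,\mu(dt)$ and $B=\int\frac{1}{1-tz}\,\mu(dt)$, it suffices to show $\im\big(A\overline B\big)\ge0$; symmetrizing the double integral representing $A\overline B-\overline A B$ over its two variables collapses it to
\[
\im\big(A(z)\overline{B(z)}\big)=\frac{\im z}{2}\iint\frac{(t-s)^2}{|1-tz|^2\,|1-sz|^2}\,\mu(dt)\,\mu(ds)\ \ge\ 0
\]
for $z\in\comp_+$. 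Together with $\eta_\mu(-0)=0$, which is just the normalization $\mu(\real_+)=1$ read off as $z\to0^-$, this gives (1)$\Rightarrow$(2).

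The converse of (i) is where I expect the real work. Starting from $\eta$ with the stated properties, I would reconstruct $\psi=\eta/(1-\eta)$ and then the candidate reciprocal Cauchy transform $F(w)=w\big(1-\eta(1/w)\big)$ on $\comp_+$ (equivalently $G=1/F$), via the relation $1+\psi_\mu(1/w)=wG_\mu(w)$. Three things must be checked: that $F$ maps $\comp_+$ into $\comp_+$ (a reformulation of $\arg\eta(z)\ge\arg z$ after the inversion $z\mapsto1/w$ and the reflection $\eta(\overline z)=\overline{\eta(z)}$); that $F(iy)/(iy)\to1$ as $y\to\infty$, so $G$ is the Cauchy transform of a mass-one measure (this is where $\eta(-0)=0$ enters); and that the measure is supported on $\real_+$, which I would read off from the sign of $F$ on the negative axis, where $\eta$ is real. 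The main obstacle is precisely this last step together with making the normalization limit rigorous: one must invoke the Nevanlinna representation of $F$ and the characterization of support in $\real_+$, and verify the non-degeneracy $\mu\neq\delta_0$. Once $F=F_\mu$ with $\supp\mu\subseteq\real_+$ is established, unwinding the identities returns $\eta=\eta_\mu$, completing (2)$\Rightarrow$(1).
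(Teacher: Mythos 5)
First, a point of reference: the paper does not prove Proposition \ref{prop0} at all --- it is quoted from Belinschi--Bercovici \cite{Bel3} --- so your attempt can only be compared with the standard argument in the literature, which it essentially follows. Your part (ii) is complete and correct (Cayley transform into the Herglotz class one way, Herglotz representation back), and so is your forward implication in (i): the symmetrization identity for $\im\bigl(A(z)\overline{B(z)}\bigr)$ is exactly the right computation. One small misstatement there: $\eta_\mu(-0)=0$ is \emph{not} ``the normalization $\mu(\real_+)=1$ read off''; it holds for every finite positive measure on $\real_+$ by dominated convergence ($\psi_\mu(z)\to 0$ as $z\uparrow 0$). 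The total mass actually enters only in the converse, as follows.

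The genuine gap is the one you flagged yourself, in the converse of (i), and it needs to be closed rather than merely named: the hypothesis $\eta(-0)=0$ is a limit along $(-\infty,0)$, so it does not directly yield the normalization $F(iy)/(iy)\to 1$, which is a limit along a ray where you have no pointwise control of $\eta$. The fix: note that $F(w):=w\bigl(1-\eta(1/w)\bigr)$ is defined and analytic on all of $\comp\setminus\real_+$ (not just $\comp_+$), is real on $(-\infty,0)$ by the symmetry $\eta(\overline{z})=\overline{\eta(z)}$, and maps $\comp_+$ into $\comp_+$ because, with $z=1/\overline{w}\in\comp_+$, one has $F(w)=w-\overline{\eta(z)/z}$ and hence $\im F(w)=\im w+\im\bigl(\eta(z)/z\bigr)\ge\im w$. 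Writing the Nevanlinna representation $F(w)=a+bw+\int_\real\frac{1+xw}{x-w}\,\sigma(dx)$, the analytic continuation of $F$ across $(-\infty,0)$ with real values forces $\supp\sigma\subset\real_+$, and then $b=\lim_{u\to-\infty}F(u)/u=1-\eta(-0)=1$, a limit taken along the negative axis, exactly where your hypothesis lives; this gives $F=F_\mu$ for a probability measure $\mu$. To place $\supp\mu$ in $\real_+$ you must also rule out zeros of $F$ on $(-\infty,0)$ (a zero of $F$ is a pole of $G_\mu$, i.e.\ a negative atom of $\mu$); this uses a fact not recorded in your outline: letting $z\to x<0$ in the condition $\arg\eta(z)\in[\arg z,\pi)$ forces $\eta\le 0$ on $(-\infty,0)$, so that $1-\eta(1/u)\ge 1$ and $F(u)\le u<0$ there. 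Stieltjes inversion then gives $\mu\bigl((-\infty,0)\bigr)=0$, and finally $\mu\ne\delta_0$ because $\eta_{\delta_0}\equiv 0$ while $\eta\not\equiv 0$. With these insertions your outline becomes a complete proof along the same lines as the cited one.
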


If $\mu \neq \delta_0 \in \mathcal{P}(\real_+)$, the function $\eta_\mu$ is injective in $(-\infty, 0)$, so that 
one can define $\eta_\mu^{-1}(z)$ and 
\[
\Sigma_\mu(z) :=\frac{\eta_\mu^{-1}(z)}{z}
\]
in a suitable open set of $\comp$, which actually contains an interval of the form $(-\alpha, 0)$, $\alpha >0$. The multiplicative free convolution $\boxtimes$ is characterized by the multiplication of $\Sigma$: 
\begin{equation}\label{eq324}
\Sigma_{\mu \boxtimes \nu}(z) = \Sigma_\mu(z) \Sigma_\nu(z)
\end{equation}
for $z$ in an interval $(-\beta, 0)$, provided $\mu \neq \delta_0 \neq \nu$. A measure $\mu \in \mathcal{P}(\real_+)$ is  $\boxtimes$-infinitely divisible if and only if $\Sigma_\mu$ can be written as \cite{Be-Vo}
\begin{equation}\label{eq76}
\Sigma_\mu(z)= \exp\Big(-a_\mu z + b_\mu + \int_{\real_+}\frac{1 +xz}{z-x}\tau_\mu(dx)\Big),
\end{equation}
where $a_\mu \geq 0$, $b_\mu \in \real$ and $\tau_\mu$ is a non-negative finite measure on $\real_+ = [0,\infty)$.

For a measure $\mu$ on $\tor$ with $m_1(\mu):=\int_{\tor} \zeta\, \mu(d\zeta)=\eta_\mu'(0) \neq 0$, the inverse $\eta_\mu^{-1}(z)$ exists in a neighborhood of $0$. The multiplicative convolution $\boxtimes$ is characterized by the same relation (\ref{eq324}), but now the domain for $z$ is a neighborhood of $0$. 
Now a measure $\mu \in \mathcal{P}(\tor)$ is $\boxtimes$-infinitely divisible if and only if $\Sigma_\mu$ can be written as \cite{Be-Vo2}
\begin{equation}\label{idm}
\Sigma_\mu(z)= \gamma_\mu \exp\Big(\int_{\tor}\frac{1 +\zeta z}{1-\zeta z}\tau_\mu(d\zeta)\Big)
\end{equation}
for $z \in \disc$, where $\gamma_\mu \in \tor$ and $\tau_\mu$  is a non-negative finite measure.

If $\mu \in \mathcal{P}(\real_+)$, a convolution power $\mu^{\boxtimes t} \in \mathcal{P}(\real_+)$, satisfying $\Sigma_{\mu^{\boxtimes t}}(z)=(\Sigma_\mu(z))^t$, is well defined for any $\mu$ and any $ t \geq 1$. However this is not true for $\mu \in \mathcal{P}(\tor)$. With additional assumptions that $\eta_\mu$ does not vanish in $\disc$ and $m_1(\mu)=\eta_\mu'(0)\neq 0$, a power $\mu^{\boxtimes t} \in \mathcal{P}(\tor)$ exists for any $\mu$ and any $ t \geq 1$. There is however another problem which will be explained in Section \ref{sec3}. 

$\boxtimes$-infinite divisibility is equivalent to the existence of a weakly continuous convolution semigroup $\mu^{\boxtimes t}$ for $t \geq 0$ with $\mu^{\boxtimes 0} = \delta_1$.  Another characterization of infinitely divisible distributions is found in \cite{B-W} in terms of infinitesimal triangular arrays.

\subsection{Additive and multiplicative Boolean convolutions}
Additive and multiplicative Boolean convolutions on $\real$ and $\tor$ were introduced in \cite{S-W} and \cite{Fra2} respectively.
Let $K_\mu(z)$ be the energy function \cite{S-W} defined by
\[
K_\mu(z)=z-F_\mu(z),~~~z\in\mathbb{C}^+
\]
for $\mu \in \mathcal{P}(\real)$.
The Boolean convolution $\mu \uplus \nu$ is characterized by 
$$
K_{\mu \uplus \nu} (z)=K_\mu(z)+K_\nu(z).
$$
 For any $t > 0$ and any probability measure $\mu$ there exists a probability measure $\mu^{\uplus t}$ such that $K_{\mu^{\uplus t}}(z)=tK_{\mu} (z)$. The L\'{e}vy-Khintchine representation is written as \cite{S-W}
\[
K_\mu(z) = \gamma_\mu +\int_{\real}\frac{1+xz}{z-x}\tau_\mu(dx),
\]
where $\gamma_\mu$ and $\tau_\mu$ satisfy the same conditions as the free case.
We note that $\eta_\mu$ and $K_\mu$ are related through the formula $\eta_\mu(z) = z K_\mu(\frac{1}{z})$. 

Now an important transform is a meromorphic function $k_\mu(z) := \frac{z}{\eta_\mu(z)}$, defined if $\eta_\mu \neq 0$. 
For $\mu, \nu \in \mathcal{P}(\tor)$, both different from the Haar measure on $\tor$, the multiplicative Boolean convolution $\mu \putimes \nu$ is characterized by
\begin{equation}\label{bo}
k_{\mu \hutimes \nu}(z) = k_\mu (z) k_\nu (z).
\end{equation}
If $\mu, \nu \in \mathcal{P}(\real_+)$, both being different from $\delta_0$, the convolution $\mu \putimes \nu$ is characterized by the same relation (\ref{bo}). 
A probability measure $\mu \in \mathcal{P}(\tor)$ is said to be $\putimes$-infinitely divisible if for any $n \in \nat$, we can find $\mu_n$ such that $\mu=\mu_n^{\hutimes n}$. This is equivalent to the condition that $\frac{1}{k_\mu(z)}$ does not have a zero in $\disc$, and is also equivalent to the existence of the L\'{e}vy-Khintchine formula \cite{Fra2}
\begin{equation}\label{levymb}
k_\mu(z) = \gamma_\mu \exp\Big(\int_{\tor}\frac{1 +\zeta z}{1-\zeta z}\tau_\mu(d\zeta)\Big),
\end{equation}
where $\gamma_\mu$ and $\tau_\mu$ satisfy the same conditions as the free case (\ref{idm}).

Bercovici proved in \cite{Ber2} that the multiplicative Boolean convolution does not preserve $\mathcal{P}(\real_+)$. However, there still exists a Boolean power $\mu^{\hutimes t} \in \mathcal{P}(\real_+)$ for $0 \leq t \leq 1$. 
Results on infinitesimal triangular arrays can be found in \cite{Wang1}.

\section{On the free divisibility indicator}\label{sec2}
A central objective of this paper is a composition semigroup $\{\mathbb{B}_t \}_{t \geq 0}$, introduced by Belinschi and Nica \cite{Bel2}, defined to be
\[
\mathbb{B}_t(\mu) = \Big(\mu^{\boxplus (1+t)}\Big) ^{\uplus\frac{1}{1+t}},~~~\mu\in \mathcal{P}(\real).
\]
In addition to the semigroup property 
\begin{equation}\label{semigroup}
\mathbb{B}_t(\mathbb{B}_s(\mu))=\mathbb{B}_{t+s}(\mu),
\end{equation}
the map $\mathbb{B}_t$ is a homomorphism regarding the multiplicative free convolution $\boxtimes$: 
\begin{equation}\label{homomorphism}
\mathbb{B}_t(\mu \boxtimes \nu) = \mathbb{B}_t (\mu) \boxtimes \mathbb{B}_t(\nu)
\end{equation} 
for probability measures $\mu, \nu$, one of which is supported on $\real_+$. It is known that $\mathbb{B}_1$ coincides with the Bercovici-Pata bijection $\Lambda_B$ from the Boolean convolution to the free one. The reader is referred to \cite{Be-Pa} for the definition of $\Lambda_B$. Let $\phi(\mu)$ denote the free divisibility indicator defined by
\[
\phi(\mu):=\sup \{t \geq 0: \mu \in \mathbb{B}_t(\mathcal{P(\real)}) \}.
\]
For a probability measure $\mu$, Belinschi and Nica proved that a probability measure $\nu$ uniquely exists such that $\mathbb{B}_{\phi(\mu)}(\nu)=\mu$.
Therefore, $\mathbb{B}_t(\mu)$ can be defined as a probability measure for any $t \geq -\phi(\mu)$. This is a natural extension to possibly negative $t$ because the semigroup property (\ref{semigroup})  still holds if $t,s,t+s \geq -\phi(\mu)$. Here we collect properties of the free divisibility indicator~\cite{Bel2}.
\begin{thm}\label{thm001}
(1) $\mu^{\boxplus t}$ exists for $t \geq \max\{1-\phi(\mu),0 \}$. \\
(2) $\mu$ is $\boxplus$-infinitely divisible if and only if $\phi(\mu) \geq 1$. \\
(3) $\phi(\mathbb{B}_t(\mu))$ can be calculated as
\begin{equation}\label{eq7}
\phi(\mathbb{B}_t(\mu)) = \phi(\mu)+t
\end{equation}
for $t \geq -\phi(\mu)$.
\end{thm}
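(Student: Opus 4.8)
The plan is to derive all three statements from the structural facts already recorded above: the semigroup law $\mathbb{B}_s\circ\mathbb{B}_t=\mathbb{B}_{s+t}$, the identification $\mathbb{B}_1=\Lambda_B$, the existence of $\mathbb{B}_t(\mu)$ for every $t\geq-\phi(\mu)$ together with the unique preimage $\mathbb{B}_{\phi(\mu)}(\nu)=\mu$, the free-power criterion ``$\mu^{\boxplus r}$ exists iff some probability measure has Voiculescu transform $r\phi_\mu$'', and the elementary fact that Boolean powers linearize the energy function, so that $(\rho^{\uplus a})^{\uplus b}=\rho^{\uplus ab}$ and $\rho^{\uplus a}$ exists for every $a\geq0$.

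First I would record a consequence of the unique-preimage result: the family $\{\mathbb{B}_s(\mathcal{P}(\real))\}_{s\geq0}$ is decreasing and $\{s\geq0:\mu\in\mathbb{B}_s(\mathcal{P}(\real))\}=[0,\phi(\mu)]$, with the supremum attained. This converts the definition of $\phi$ into the equivalence $\mu\in\mathbb{B}_s(\mathcal{P}(\real))\iff 0\leq s\leq\phi(\mu)$, from which (3) follows: writing $\mathbb{B}_t(\mu)=\mathbb{B}_{t+\phi(\mu)}(\nu)$ gives $\phi(\mathbb{B}_t(\mu))\geq\phi(\mu)+t$, while if $\mathbb{B}_t(\mu)=\mathbb{B}_s(\lambda)$ with $s>\phi(\mu)+t$ then applying $\mathbb{B}_{-t}$ (legitimate because $t\leq\phi(\mu)+t\leq\phi(\mathbb{B}_t(\mu))$) yields $\mu\in\mathbb{B}_{s-t}(\mathcal{P}(\real))$ with $s-t>\phi(\mu)$, a contradiction. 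For (2) I would use that $\Lambda_B=\mathbb{B}_1$ is a bijection of $\mathcal{P}(\real)$ onto the $\boxplus$-infinitely divisible measures; hence $\mu$ is $\boxplus$-infinitely divisible iff $\mu\in\mathbb{B}_1(\mathcal{P}(\real))$, and by the equivalence above this happens exactly when $\phi(\mu)\geq1$.

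The analytic content is (1). The crux is the identity
\[
\mu^{\boxplus(1+t)}=\bigl(\mathbb{B}_t(\mu)\bigr)^{\uplus(1+t)},\qquad t\geq\max\{-\phi(\mu),-1\},
\]
read as: the right-hand side, which manifestly exists as a probability measure because $\mathbb{B}_t(\mu)$ exists for $t\geq-\phi(\mu)$ and Boolean powers of nonnegative exponent always exist, has Voiculescu transform $(1+t)\phi_\mu$ and therefore realizes $\mu^{\boxplus(1+t)}$. Setting $t=r-1$ then gives existence of $\mu^{\boxplus r}$ for all $r\geq\max\{1-\phi(\mu),0\}$, which is exactly (1). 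For $t\geq0$ the identity is immediate from $\mathbb{B}_t(\mu)=(\mu^{\boxplus(1+t)})^{\uplus\frac{1}{1+t}}$ and $(\rho^{\uplus\frac{1}{1+t}})^{\uplus(1+t)}=\rho$.

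The main obstacle is to justify the identity for $-\phi(\mu)\leq t<0$, where the left-hand side is not yet known to exist and so cannot be manipulated directly. Here I would substitute $\mu=\mathbb{B}_{\phi(\mu)}(\nu)$, so that $\mathbb{B}_t(\mu)=\mathbb{B}_{t+\phi(\mu)}(\nu)$, reducing the right-hand side to $\bigl(\nu^{\boxplus(1+t+\phi(\mu))}\bigr)^{\uplus\frac{1+t}{1+t+\phi(\mu)}}$, an expression built entirely from the genuine measure $\nu$ with all exponents admissible ($1+t+\phi(\mu)\geq1$ and $\frac{1+t}{1+t+\phi(\mu)}\geq0$). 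It then remains to check that this measure has Voiculescu transform $(1+t)\phi_\mu$. This is most transparent through $F_\mu$ and its compositional inverse $F_\mu^{-1}$, using that free powers act affinely on $F^{-1}$ while Boolean powers act affinely on $F$; the verification reduces to an algebraic identity among $F_\nu$, $F_\nu^{-1}$ and the parameters $t,\phi(\mu)$, which one can alternatively obtain by analytic continuation in $t$ from the established case $t\geq0$. Carrying out this transform computation, and confirming the regularity of $t\mapsto\mathbb{B}_t(\mu)$ that legitimizes the continuation, is the delicate step; everything else is formal.
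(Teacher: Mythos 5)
Your overall architecture is sound, and it is worth noting that the paper itself gives no proof of Theorem \ref{thm001} at all: it is quoted from Belinschi and Nica \cite{Bel2}. So what you have written is in effect a reconstruction of the original argument — the interval characterization $\mu\in\mathbb{B}_s(\mathcal{P}(\real))\iff 0\le s\le\phi(\mu)$, part (3) from the semigroup law and unique preimages, part (2) from $\mathbb{B}_1=\Lambda_B$ being a bijection onto the $\boxplus$-infinitely divisible laws, and part (1) from the identity $\mu^{\boxplus(1+t)}=\bigl(\mathbb{B}_t(\mu)\bigr)^{\uplus(1+t)}$. Two points, however, are genuinely open in your write-up. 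First, each time you ``apply $\mathbb{B}_{-t}$'' with $t\ge 0$ (in part (3), and in making the negative-time extension consistent with the flow) you implicitly use that $\mathbb{B}_t$ is injective; this is true and elementary — $K_{\rho^{\uplus c}}=cK_\rho$ and $\phi_{\rho^{\boxplus c}}=c\phi_\rho$, and the energy function and Voiculescu transform each determine the measure, so both power maps are injective — but without stating it, ``$\mathbb{B}_{-t}$'' is not a well-defined map and the cancellation $\mathbb{B}_{-t}\circ\mathbb{B}_t=\mathrm{id}$ has no meaning.

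Second, and more seriously, the step you yourself flag as ``the delicate step'' — that $\bigl(\nu^{\boxplus(1+t+\phi(\mu))}\bigr)^{\uplus\frac{1+t}{1+t+\phi(\mu)}}$ has Voiculescu transform $(1+t)\phi_\mu$ — is never carried out, and the alternative of ``analytic continuation in $t$'' would itself require a regularity argument you do not supply. This gap closes at once with a tool already stated in the paper, namely Proposition \ref{prop5}. Put $P:=\phi(\mu)$ and first suppose $P<1$ (if $P\ge1$, part (2) shows $\mu$ is $\boxplus$-infinitely divisible, and then the existence of $\mu^{\boxplus r}$ for all $r\ge0$ is the standard fact recalled in the Preliminaries). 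Applying Proposition \ref{prop5} to $\nu$ with $p=1+P$, $q=\frac{1}{1+P}$ gives $pq=1$, $q'=1-P>0$, $p'=\frac{1}{1-P}$, i.e.
\[
\mu=\mathbb{B}_{P}(\nu)=\bigl(\nu^{\uplus(1-P)}\bigr)^{\boxplus\frac{1}{1-P}},
\]
while applying it with $p=1+t+P$, $q=\frac{1+t}{1+t+P}$ (the hypotheses $p\ge1$ and $q>1-\frac1p$ hold precisely because $t\ge-P$ and $P<1$) gives $pq=1+t$, $q'=1-P$, $p'=\frac{1+t}{1-P}$, i.e.
\[
\bigl(\nu^{\boxplus(1+t+P)}\bigr)^{\uplus\frac{1+t}{1+t+P}}=\bigl(\nu^{\uplus(1-P)}\bigr)^{\boxplus\frac{1+t}{1-P}}.
\]
Thus your candidate measure and $\mu$ are free powers of the \emph{same} measure $\nu^{\uplus(1-P)}$, whence its Voiculescu transform equals $\frac{1+t}{1-P}\,\phi_{\nu^{\uplus(1-P)}}=(1+t)\phi_\mu$, which is exactly what part (1) requires. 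With these two repairs (injectivity stated, and the transform identity derived from Proposition \ref{prop5} together with the case split $P<1$ versus $P\ge1$), your proof is complete.
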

The following property was crucial to prove the semigroup property of $\mathbb{B}_t$ \cite{Bel2}. This will be also crucial in Theorem \ref{thm01} below.
\begin{prop}\label{prop5} Let $\mu \in  \mathcal{P}(\real)$ and $p, q$ be two real numbers such that $p \geq 1$ and $1-\frac{1}{p} < q$. Then
\begin{equation}
(\mu^{\boxplus p})^{\uplus q}= (\mu^{\uplus q'})^{\boxplus p'},
\end{equation}
where $p', q'$ are defined by $p' := pq/(1 - p + pq)$, $q' := 1 - p + pq$.
\end{prop}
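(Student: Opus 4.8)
The plan is to reduce everything to the behaviour of the reciprocal Cauchy transform $F_\mu$ and its inverse $F_\mu^{-1}$ under the two powers. Writing the Voiculescu transform as $\phi_\mu(z) = F_\mu^{-1}(z)-z$ and the energy function as $K_\mu(z) = z - F_\mu(z)$, the additivity relations $\phi_{\mu^{\boxplus t}} = t\phi_\mu$ and $K_{\mu^{\uplus t}} = tK_\mu$ translate into two affine interpolation rules,
\begin{equation}
F_{\mu^{\boxplus t}}^{-1}(z) = (1-t)z + t\,F_\mu^{-1}(z), \qquad F_{\mu^{\uplus t}}(z) = (1-t)z + t\,F_\mu(z).
\end{equation}
Thus a free power interpolates linearly on the level of $F_\mu^{-1}$ while a Boolean power interpolates linearly on the level of $F_\mu$. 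The strategy is to compute $F^{-1}$ of each side of the claimed identity and check the two analytic functions agree; since $\nu \mapsto F_\nu$ is injective, this proves the equality of measures.

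To carry out the two computations, set $\psi := F_\mu^{-1}$. For the left-hand side $(\mu^{\boxplus p})^{\uplus q}$ I first apply the free rule to get $F_{\mu^{\boxplus p}}^{-1}(s) = (1-p)s + p\,\psi(s)$, then the Boolean rule; parametrizing by $s = F_{\mu^{\boxplus p}}(z)$ and substituting, I expect to obtain
\begin{equation}
F_{(\mu^{\boxplus p})^{\uplus q}}^{-1}(w) = (1-p)s + p\,\psi(s), \qquad w = q'\,s + (1-q')\psi(s),
\end{equation}
where the coefficients collapse precisely because $(1-q)(1-p)+q = 1-p+pq = q'$ and $(1-q)p = 1-q'$. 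For the right-hand side $(\mu^{\uplus q'})^{\boxplus p'}$ I apply the Boolean rule first and invert, then the free rule, and the same type of substitution yields
\begin{equation}
F_{(\mu^{\uplus q'})^{\boxplus p'}}^{-1}(u) = (1-p')q'\,r + \bigl[(1-p')(1-q') + p'\bigr]\psi(r), \qquad u = q'\,r + (1-q')\psi(r).
\end{equation}
Using $1-p' = (1-p)/q'$, which is immediate from $p' = pq/q'$ and $q' = 1-p+pq$, the two coefficients simplify to $(1-p')q' = 1-p$ and $(1-p')(1-q') + p' = p[(1-p)(1-q)+q]/q' = p$. Hence the right-hand side has exactly the same parametric form as the left-hand side, so the two inverse transforms coincide and the measures are equal.

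The hypotheses enter only to make all objects legitimate: $p \geq 1$ guarantees that $\mu^{\boxplus p}$ exists, while $q > 1 - \tfrac1p$ is equivalent to $q' = 1-p+pq > 0$, which is what makes $\mu^{\uplus q'}$ meaningful; together with $p \geq 1$ this forces $q > 0$ and $p' = pq/q' \geq 1$ (since, with $q'>0$, one has $p' \geq 1 \iff p \geq 1$), so all four powers in the identity are genuine probability measures. The algebra above is short; the step I expect to be the main obstacle is the analytic bookkeeping. Each of $F_\mu$, $\psi = F_\mu^{-1}$, $\phi_\mu$ and $K_\mu$ lives only on a truncated Stolz cone in $\comp_+$, and I must verify that all the inversions and substitutions are simultaneously valid on a common nonempty open set that is a uniqueness set for the reciprocal Cauchy transform. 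Once this domain matching is established, equality of the two analytic functions there upgrades to equality of measures by the injectivity of $\nu \mapsto F_\nu$.
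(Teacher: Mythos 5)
Your computation is correct, and it amounts to a complete, self-contained proof of a statement that the paper itself does not prove: Proposition \ref{prop5} is quoted there from Belinschi--Nica \cite{Bel2} without argument. Your route --- writing the free power as an affine rule on $F_\mu^{-1}$ and the Boolean power as an affine rule on $F_\mu$, then exhibiting both sides of the identity in the same parametric form $F^{-1}\bigl(q's+(1-q')\psi(s)\bigr)=(1-p)s+p\,\psi(s)$ --- is sound; the key algebraic identities you use, $(1-q)(1-p)+q=q'$, $p(1-q)=1-q'$, $p'q'=pq$, $(1-p')q'=1-p$ and $(1-p')(1-q')+p'=p$, all check out, and your verification that the hypotheses $p\geq 1$, $q>1-\tfrac1p$ give exactly $q'>0$, $q>0$ and $p'\geq 1$ (so all four powers exist) is also correct. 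The closest proof actually contained in the paper is that of the multiplicative analogue, Proposition \ref{prop131}: there the authors do not compute both sides parametrically, but instead express each side through a subordination function ($\omega_p$ for the left side, $\sigma_{p'}$ for the right) and conclude by the uniqueness of the right inverse of $\Phi_p$ that $\omega_p=\sigma_{p'}$. The two strategies are close in spirit --- your parametrizing variable plays the role of their subordination function --- but yours is more elementary and makes the identity look like pure affine algebra, which is exactly what one can get away with on the real line; the subordination/uniqueness formulation is what survives in the multiplicative setting, where powers are multivalued and the paper must track branches via $[\arg m_1(\mu)]$, a complication with no additive counterpart. The one point you correctly flag but leave schematic, the domain bookkeeping, is genuinely needed but standard: all of $F_\mu^{-1}$, $F_{\mu^{\boxplus p}}^{-1}$, $F_{\mu^{\uplus q'}}^{-1}$ are defined on a common truncated cone $\Gamma$, the map $T(s)=q's+(1-q')\psi(s)$ is nonconstant analytic (as $\psi(s)\sim s$ nontangentially), so $T(\Gamma)$ contains a nonempty open set on which the two inverses agree, and the identity theorem plus Stieltjes inversion then upgrade this to equality of the measures. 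With that paragraph written out, your argument is a valid alternative proof.
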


\subsection{On free powers, Boolean powers and shifts}

The following results describe the behavior of the divisibility indicator under free powers, Boolean powers and shifts which give a better understanding of this indicator independently as an object independent of
the evolution $\mathbb{B}_{t}$. In particular, these results give a clear and quantitative descripition of why Bo\.zejko's conjecture is true.
First, we explicitly calculate the free divisibility indicator for free and Boolean time evolutions.
\begin{thm} \label{thm01} Let $\mu$ be a probability measure of $\mathcal{P}(\real)$. Then
 $\phi(\mu^{\uplus t}) = \frac{1}{t}\phi(\mu)$ for $t > 0$. Moreover, $\phi(\mu^{\boxplus t})-1 = \frac{1}{t}(\phi(\mu)-1)$ for $t > \max \{1- \phi(\mu),0\}$.
\end{thm}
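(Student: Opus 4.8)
The plan is to reduce everything to the behaviour of the additive indicator under the Belinschi--Nica flow, i.e. to Theorem \ref{thm001}(3) together with the commutation relation of Proposition \ref{prop5}. Throughout, write $r := \phi(\mu)$ and let $\nu$ be the probability measure with $\mathbb{B}_{r}(\nu) = \mu$ furnished by Belinschi and Nica; since $\phi(\mathbb{B}_r(\nu)) = \phi(\nu) + r$, we have $\phi(\nu) = 0$. I would treat the generic case $r < \infty$ first and dispose of $r = \infty$ (the Cauchy/delta fixed points) separately by a direct check.

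For the Boolean statement the central claim is the identity
\[
\mu^{\uplus t} = \mathbb{B}_{r/t}\big(\nu^{\uplus t}\big), \qquad t > 0.
\]
To verify it I would expand both sides to free-then-Boolean normal form. On the left, $\mu^{\uplus t} = (\nu^{\boxplus(1+r)})^{\uplus t/(1+r)}$. On the right, the inner free power $(\nu^{\uplus t})^{\boxplus(1+r/t)}$ is rewritten by Proposition \ref{prop5} (with $p = 1+r$, $q = (t+r)/(1+r)$, whose hypothesis $1 - 1/p < q$ reduces to $t > 0$ and therefore holds for every $t$) as $(\nu^{\boxplus(1+r)})^{\uplus (t+r)/(1+r)}$; composing the remaining Boolean power $1/(1+r/t)$ collapses the exponent to $t/(1+r)$, matching the left side. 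Once the identity is in hand, $\mu^{\uplus t} \in \mathbb{B}_{r/t}(\mathcal{P}(\real))$, so the definition of the indicator gives $\phi(\mu^{\uplus t}) \ge r/t = \phi(\mu)/t$. Applying this same inequality to the pair $(\mu^{\uplus t}, 1/t)$ and using $(\mu^{\uplus t})^{\uplus 1/t} = \mu$ yields $\phi(\mu) \ge t\,\phi(\mu^{\uplus t})$, i.e. the reverse inequality, whence equality.

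For the free statement I would first establish it for $t \ge 1$ by reducing to the Boolean case just proved. The key identity here is $\mu^{\boxplus t} = \big(\mathbb{B}_{r+t-1}(\nu)\big)^{\uplus t}$: expanding $\mu^{\boxplus t} = (\rho^{\uplus 1/(1+r)})^{\boxplus t}$ with $\rho := \nu^{\boxplus(1+r)}$ and applying Proposition \ref{prop5} (now with $p = (r+t)/(1+r)$, $q = t/(r+t)$) turns it into $(\nu^{\boxplus(r+t)})^{\uplus t/(r+t)}$, which is exactly $(\mathbb{B}_{r+t-1}(\nu))^{\uplus t}$. Taking $\phi$ and invoking the Boolean formula together with Theorem \ref{thm001}(3) gives $\phi(\mu^{\boxplus t}) = \tfrac1t\,\phi(\mathbb{B}_{r+t-1}(\nu)) = \tfrac1t(\phi(\nu)+r+t-1) = (r+t-1)/t$, i.e. $\phi(\mu^{\boxplus t}) - 1 = (r-1)/t = (\phi(\mu)-1)/t$. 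To cover the remaining range $\max\{1-r,0\} < t < 1$, I would bootstrap: apply the just-proved case (with exponent $1/t \ge 1$) to the measure $\mu^{\boxplus t}$, using $(\mu^{\boxplus t})^{\boxplus 1/t} = \mu$, and solve for $\phi(\mu^{\boxplus t})$.

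The main obstacle is the hypothesis $p \ge 1$ in Proposition \ref{prop5}. It costs nothing in the Boolean statement (there the side condition collapses to $t>0$), but in the free statement the natural exponent $p = (r+t)/(1+r)$ is $\ge 1$ only when $t \ge 1$; this is precisely why the free case splits into a direct argument for $t \ge 1$ and a bootstrap for $t < 1$. The second point requiring care is keeping the existence ranges straight --- in particular that $\mathbb{B}_{r+t-1}$ is legitimate (which needs $r + t - 1 \ge -\phi(\nu) = 0$) and that every free power written down exists in the asserted domain $t > \max\{1-\phi(\mu),0\}$ guaranteed by Theorem \ref{thm001}(1). Finally, the degenerate case $\phi(\mu) = \infty$, where the representing measure $\nu$ is unavailable, should be settled by checking the two formulas directly on the Cauchy and delta distributions.
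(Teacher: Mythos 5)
Your main line of argument is essentially the paper's own proof. For the Boolean half you use exactly the same key identity $\mu^{\uplus t} = \mathbb{B}_{r/t}\bigl(\nu^{\uplus t}\bigr)$, verified by the same application of Proposition \ref{prop5} (with $p = 1+r$, $q = (t+r)/(1+r)$) and followed by the same reverse-inequality trick ($s \mapsto 1/s$, $\mu \mapsto \mu^{\uplus s}$). For the free half, your identity $\mu^{\boxplus t} = \bigl(\mathbb{B}_{r+t-1}(\nu)\bigr)^{\uplus t}$ is, by the semigroup property $\mathbb{B}_{r+t-1}(\nu) = \mathbb{B}_{t-1}(\mathbb{B}_{r}(\nu)) = \mathbb{B}_{t-1}(\mu)$, the same relation $\mu^{\boxplus t} = \mathbb{B}_{t-1}(\mu)^{\uplus t}$ that the paper feeds into the Boolean formula and (\ref{eq7}); your split into $t \geq 1$ plus a bootstrap for $\max\{1-\phi(\mu),0\} < t < 1$ is a legitimate, and in fact slightly more careful, way of covering the range where the paper tacitly invokes $\mathbb{B}_s$ at negative times $s$.

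The genuine gap is your disposal of the case $\phi(\mu) = \infty$. You propose to settle it by ``checking the two formulas directly on the Cauchy and delta distributions,'' i.e.\ you identify $\{\mu : \phi(\mu) = \infty\}$ with the fixed points of $\mathbb{B}_t$. That identification is false, and it is contradicted by this very paper: the Boolean and free $1/2$-stable laws are shown (in the subsection on stable laws) to have indicator $\infty$ while explicitly \emph{not} being fixed points of $\mathbb{B}_t$; the Appendix only asserts that fixed points are Cauchy or delta, not the converse. Worse, Theorem \ref{thm01} is applied later precisely to measures whose indicator is not known in advance (the stable laws), so a proof valid only for Cauchy, delta, and finite-indicator measures would not support those applications. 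The repair is cheap and requires no classification: by the semigroup property, the set $\{t \geq 0 : \mu \in \mathbb{B}_t(\mathcal{P}(\real))\}$ is an interval containing $[0,\phi(\mu))$, so when $\phi(\mu) = \infty$ you can run your finite-case computation with $\mu = \mathbb{B}_T(\nu_T)$ for arbitrarily large finite $T$ in place of $r$, obtaining $\phi(\mu^{\uplus t}) \geq T/t$ for every $T$, hence $\phi(\mu^{\uplus t}) = \infty$; the free-power statement then follows the same way (or from the same limiting argument applied to your identity with $T$ in place of $r$).
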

\begin{proof}
If $\phi(\mu)=\infty$ this is trivial.
Suppose $\phi(\mu)=l$, then there is $\nu$ such that $\mathbb{B}_t(\nu)=\mu$ and then
\[
\begin{split}
\mu^{\uplus s}  &= \Big((\nu^{\boxplus (1+l)})^{\uplus \frac{s+l}{1+l}}\Big) ^{\uplus \frac{s}{s+l}} \\
&= \Big((\nu^{\uplus s})^{\boxplus \frac{s+l}{s}}\Big) ^{ \uplus \frac{s}{s+l}} \\
&= \Big((\nu^{\uplus s})^{\boxplus (1 +l/s)}\Big) ^{ \uplus \frac{1}{1+l/s}} \\
&= \mathbb{B}_{l/s}(\nu^{\uplus s}),
\end{split}
\]
where Proposition \ref{prop5} was applied in the second line with $p =1+l$, $q = \frac{s+l}{1+l}$. Therefore, $\phi(\mu^{\uplus s}) \geq l/s = \frac{\phi(\mu)}{s}$. Replacing $s$ by $1/s$ and $\mu$ by $\mu^{\uplus s}$, we see that $\phi(\mu) \geq s \phi(\mu^{\uplus s})$. Therefore, the conclusion follows for Boolean powers. From this result and (\ref{eq7}), we can prove that $\phi(\mu^{\boxplus (t+1)}) = \phi(\mathbb{B}_t(\mu)^{\uplus (1+t)}) = \frac{1}{1+t}\phi(\mathbb{B}_t(\mu)) = \frac{\phi(\mu)+t}{1+t}$ for $t > \max\{-\phi(\mu),-1 \}$.
\end{proof}

The first identity of Theorem \ref{thm01} leads to a new interpretation of $\phi(\mu)$ in terms of Boolean powers. Let us mention that this characterization is very useful for deriving the value of the free divisibility indicator, as we will see in the next section. This is because, in practice, it is much easier to calculate Boolean powers than free powers.
\begin{cor} \label{cor90}
$\phi(\mu) = \sup \{t \geq 0: \mu^{\uplus t} \text{~is~} \text{$\boxplus$-infinitely divisible}\}$.
\end{cor}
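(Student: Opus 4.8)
The plan is to reduce the statement to the two facts already established: the scaling law for Boolean powers in Theorem \ref{thm01} and the characterization of free infinite divisibility in Theorem \ref{thm001}(2). The point is that infinite divisibility of $\mu^{\uplus t}$ can be read off directly from the value of the indicator of $\mu^{\uplus t}$, and that value is controlled explicitly by $\phi(\mu)$.

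First I would fix $t>0$ and combine the two results. By Theorem \ref{thm001}(2), the measure $\mu^{\uplus t}$ is $\boxplus$-infinitely divisible if and only if $\phi(\mu^{\uplus t}) \geq 1$. By the first identity of Theorem \ref{thm01}, $\phi(\mu^{\uplus t}) = \frac{1}{t}\phi(\mu)$. Hence $\mu^{\uplus t}$ is $\boxplus$-infinitely divisible if and only if $\frac{1}{t}\phi(\mu) \geq 1$, which for $t>0$ is equivalent to $t \leq \phi(\mu)$.

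Next I would assemble the set whose supremum is being taken. The previous step shows that
\[
\{t > 0 : \mu^{\uplus t} \text{~is~} \boxplus\text{-infinitely divisible}\} = \{t > 0 : t \leq \phi(\mu)\}.
\]
Adjoining $t=0$, which always lies in the set since $\mu^{\uplus 0} = \delta_0$ is $\boxplus$-infinitely divisible, gives $[0, \phi(\mu)]$ when $\phi(\mu) > 0$ and $\{0\}$ when $\phi(\mu) = 0$. In both cases the supremum equals $\phi(\mu)$, which is exactly the asserted identity.

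There is essentially no hard step here; the corollary is a clean repackaging of Theorem \ref{thm01} and Theorem \ref{thm001}(2). The only points requiring a moment of care are the restriction $t>0$ in the scaling law (handled by treating $t=0$ separately) and the degenerate case $\phi(\mu)=0$, where one must check that the admissible set collapses to $\{0\}$ so that the supremum is still $\phi(\mu)$. Neither raises a genuine obstacle.
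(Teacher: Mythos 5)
Your proof is correct and follows exactly the route the paper intends: the paper states Corollary \ref{cor90} as an immediate consequence of the first identity of Theorem \ref{thm01} combined with Theorem \ref{thm001}(2), which is precisely the reduction you carry out. Your explicit handling of $t=0$ and of the degenerate case $\phi(\mu)=0$ merely fills in details the paper leaves tacit.
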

Bo\.zejko's conjecture follows immediately.
\begin{prop} \label{Boz}
If $\mu$ is  $\boxplus$-infinitely divisible, then so is $\mu^{\uplus t}$ for $0 \leq t \leq 1$.  Moreover,
\[
\phi_{\mu^{\uplus t}}(z) = K_{(\mu^{\boxplus (1-t)})^{\uplus t /(1-t)}}(z) =  \frac{t}{1-t}K_{\mu^{\boxplus (1-t)}}(z)
\]
for $0 < t < 1$. In terms of the Boolean Bercovici-Pata bijection $\Lambda_B$,

\begin{equation}\label{Bozeq}\Lambda_B \Big((\mu^{\boxplus (1-t)})^{\uplus t /(1-t)}\Big) = \mu^{\uplus t}.
\end{equation}
\end{prop}
\begin{proof}
Infinite divisibility is immediate from Theorem \ref{thm01}.
For $t \in (0,1)$,
\[
\begin{split}
\Lambda_B \Big((\mu^{\boxplus (1-t)})^{\uplus t /(1-t)}\Big) &= \mathbb{B}_1((\mu^{\boxplus (1-t)})^{\uplus t /(1-t)}) \\
&= \Big(\Big( (\mu^{\boxplus (1-t)})^{\uplus t /(1-t)} \Big)^{\boxplus 2} \Big)^{\uplus 1/2} \\ &= \Big(\Big( (\mu^{\uplus 2t})^{\boxplus 1/2} \Big)^{\boxplus 2} \Big)^{\uplus 1/2} \\
&= \mu^{\uplus t},
\end{split}
\]
where Proposition \ref{prop5} was applied in the third line.
\end{proof}
\begin{rem} Note that Equation (\ref{Bozeq}) also implies Bo\.zejko's conjecture and is independent of Theorem \ref{thm01}. However, Theorem \ref{thm01} gives a refinement of this fact.
\end{rem}
 
Free divisibility indicators are invariant under shifts of probability measures and also under the Boolean convolutions with delta measures. Let us first note the following.
\begin{lem}\label{lem51}Let $\mu$ be a probability measure on $\real$. Then the following are equivalent: \\
(1) $\mu$ is freely infinitely divisible; \\(2) $\mu \uplus \delta_a$ is freely infinitely divisible for any $a \in \real$.
\end{lem}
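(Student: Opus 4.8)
The plan is to translate free infinite divisibility into a statement about the Voiculescu transform and to record precisely how that transform changes under the Boolean convolution with a point mass. Since $G_{\delta_a}(z)=\frac{1}{z-a}$, we have $F_{\delta_a}(z)=z-a$ and hence $K_{\delta_a}(z)=a$. The defining relation $K_{\mu\uplus\delta_a}=K_\mu+K_{\delta_a}$ then gives $F_{\mu\uplus\delta_a}(z)=F_\mu(z)-a$ (note this is \emph{not} a shift of $\mu$, which would instead produce $F_\mu(z-a)$). Writing $w=F_{\mu\uplus\delta_a}(z)=F_\mu(z)-a$ and inverting on the common truncated Stolz angle, one obtains $F_{\mu\uplus\delta_a}^{-1}(w)=F_\mu^{-1}(w+a)$, and therefore
\[
\phi_{\mu\uplus\delta_a}(z)=\phi_\mu(z+a)+a.
\]

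First I would prove (1) $\Rightarrow$ (2). Assuming $\mu$ is $\boxplus$-infinitely divisible, the transform $\phi_\mu$ continues analytically to all of $\comp_+$ with values in $\comp_-\cup\real$. Because $a\in\real$, the map $z\mapsto z+a$ carries $\comp_+$ into itself, so $z\mapsto\phi_\mu(z+a)+a$ is analytic on $\comp_+$; and since adding the real constant $a$ leaves the imaginary part unchanged, it again takes values in $\comp_-\cup\real$. By the $\phi$-characterization of free infinite divisibility recalled in Section~\ref{sec1}, this yields that $\mu\uplus\delta_a$ is $\boxplus$-infinitely divisible.

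For the converse I would exploit the group structure of point masses under $\uplus$: since $K_{\delta_a}+K_{\delta_{-a}}=a-a=0=K_{\delta_0}$, we have $(\mu\uplus\delta_a)\uplus\delta_{-a}=\mu$. Applying the implication (1)$\Rightarrow$(2) with $\mu$ replaced by $\mu\uplus\delta_a$ and $a$ replaced by $-a$ then gives (2)$\Rightarrow$(1), completing the equivalence.

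The only delicate point is the passage underlying the first display: the identity $\phi_{\mu\uplus\delta_a}(z)=\phi_\mu(z+a)+a$ is a priori available only on a common truncated cone near infinity, and one must argue that the globally defined candidate $\phi_\mu(\cdot+a)+a$ really is the analytic continuation of $\phi_{\mu\uplus\delta_a}$, not merely a function agreeing with it on a smaller set. This is where uniqueness of analytic continuation on $\comp_+$ is essential, together with the fact that $z\mapsto z+a$ preserves $\comp_+$, so that the candidate is defined and analytic on the entire target domain and no part of it is lost.
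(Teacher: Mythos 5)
Your proof is correct, but it takes a genuinely different route from the paper's. You work directly with the analytic characterization of free infinite divisibility: you derive the transformation rule $\phi_{\mu\uplus\delta_a}(z)=\phi_\mu(z+a)+a$ from $F_{\mu\uplus\delta_a}(z)=F_\mu(z)-a$, and then observe that translation by a real $a$ preserves both $\comp_+$ and the property of mapping into $\comp_-\cup\real$, so the Pick--Nevanlinna criterion transfers; your closing remark about the identity theorem correctly handles the only delicate point (the identity is a priori valid only on a truncated cone, but it propagates to the global candidate). The paper instead stays inside the convolution machinery it has already built: it uses that the freely infinitely divisible laws are exactly the range of the Boolean-to-free Bercovici--Pata bijection $\Lambda_B=\mathbb{B}_1$, writes $\mu=\Lambda_B(\nu)$, and exhibits an explicit preimage $\lambda=(\nu\boxplus\delta_{-a})\uplus\delta_{2a}$ with $\Lambda_B(\lambda)=\mu\uplus\delta_a$; there the fact that $\lambda$ is a bona fide probability measure (being a shift and Boolean shift of $\nu$) replaces any continuation argument. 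The two proofs rest on the same elementary computation of how $F$ and $F^{-1}$ move under $\uplus\,\delta_a$ (indeed $z+2a-F_\nu(z+a)$ equals your $\phi_\mu(z+a)+a$), but yours is more self-contained and yields a reusable formula for the Voiculescu transform under Boolean translation, while the paper's is constructive and fits its semigroup framework, producing the $\Lambda_B$-preimage explicitly. Your reduction of (2)~$\Rightarrow$~(1) via $(\mu\uplus\delta_a)\uplus\delta_{-a}=\mu$ is fine, though taking $a=0$ in (2) is even quicker; the paper leaves this trivial direction implicit.
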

\begin{proof}
The following identity holds: 
$$
\phi_{\mu \uplus \delta_a}(z)=a+\phi_\mu(z+a). 
$$
The conclusion follows from the application of Theorem \ref{thmid}.   
\end{proof}
Now, Corollary \ref{cor90} combined with the above lemma enables us to prove the invariance of $\phi(\mu)$ under shifts and the Boolean convolution with $\delta_a$.

\begin{prop}\label{prop47} Let $\mu$ be a probability measure on $\real$ and $a \in \real$. Then $\phi(\mu) = \phi(\mu \uplus \delta_a) = \phi(\mu \boxplus \delta_a)$.
\end{prop}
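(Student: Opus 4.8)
The plan is to reduce everything to the Boolean-power characterization of the indicator from Corollary~\ref{cor90}, namely $\phi(\nu) = \sup\{t \geq 0 : \nu^{\uplus t} \text{ is } \boxplus\text{-infinitely divisible}\}$, and then to identify the Boolean powers of the shifted measures $\mu \uplus \delta_a$ and $\mu \boxplus \delta_a$ up to a harmless Boolean convolution with a point mass, which can be removed by Lemma~\ref{lem51}. Throughout I would work with the energy function $K$, using that $K_{\delta_a} \equiv a$ (since $F_{\delta_a}(z) = z - a$) and that Boolean powers act linearly on $K$.

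For the Boolean shift the computation is immediate: since $K_{\mu \uplus \delta_a} = K_\mu + a$, taking $\uplus t$ gives $K_{(\mu \uplus \delta_a)^{\uplus t}} = t K_\mu + ta = K_{\mu^{\uplus t} \uplus \delta_{ta}}$, so that $(\mu \uplus \delta_a)^{\uplus t} = \mu^{\uplus t} \uplus \delta_{ta}$. By Lemma~\ref{lem51}, $\mu^{\uplus t} \uplus \delta_{ta}$ is $\boxplus$-infinitely divisible if and only if $\mu^{\uplus t}$ is, so the two sets defining $\phi(\mu \uplus \delta_a)$ and $\phi(\mu)$ through Corollary~\ref{cor90} coincide, giving $\phi(\mu \uplus \delta_a) = \phi(\mu)$.

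For the free shift I would first record that $\mu \boxplus \delta_a$ is the translate of $\mu$ by $a$, so $F_{\mu \boxplus \delta_a}(z) = F_\mu(z - a)$ and hence $K_{\mu \boxplus \delta_a}(z) = K_\mu(z - a) + a$. Taking $\uplus t$ multiplies this by $t$, while the translate $\mu^{\uplus t} \boxplus \delta_a$ has energy function $K_{\mu^{\uplus t}}(z-a) + a = t K_\mu(z-a) + a$. Comparing the two, the powers of $\mu \boxplus \delta_a$ carry an extra constant $(t-1)a$, i.e.\
\[
(\mu \boxplus \delta_a)^{\uplus t} = (\mu^{\uplus t} \boxplus \delta_a) \uplus \delta_{(t-1)a}.
\]
Now Lemma~\ref{lem51} strips off the Boolean point mass, and since free convolution with $\delta_a$ is only a translation it preserves free infinite divisibility in both directions ($\delta_a$ is itself $\boxplus$-infinitely divisible, and one can translate back by $\delta_{-a}$). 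Hence $(\mu \boxplus \delta_a)^{\uplus t}$ is $\boxplus$-infinitely divisible exactly when $\mu^{\uplus t}$ is, and Corollary~\ref{cor90} again yields $\phi(\mu \boxplus \delta_a) = \phi(\mu)$.

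The routine parts are the energy-function bookkeeping; the one genuinely substantive input is Lemma~\ref{lem51}, which is precisely what makes the extra Boolean point masses $\delta_{ta}$ and $\delta_{(t-1)a}$ irrelevant. The main thing to get right is the free-shift case, where the shift enters the \emph{argument} of $K_\mu$ and produces the spurious factor $\delta_{(t-1)a}$; without the invariance of free infinite divisibility under Boolean shifts supplied by Lemma~\ref{lem51}, this term would obstruct a direct comparison of the suprema.
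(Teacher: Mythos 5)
Your proof is correct and follows essentially the same route as the paper: both establish the identities $(\mu \uplus \delta_a)^{\uplus t} = \mu^{\uplus t} \uplus \delta_{ta}$ and $(\mu \boxplus \delta_a)^{\uplus t} = (\mu^{\uplus t} \boxplus \delta_a) \uplus \delta_{(t-1)a}$, then apply Lemma~\ref{lem51} and Corollary~\ref{cor90} exactly as you do. The only cosmetic difference is that you carry out the bookkeeping with the energy function $K_\mu$ while the paper works directly with $F_\mu$; since $K_\mu(z) = z - F_\mu(z)$, these are the same computation.
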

\begin{proof}
For $t \geq 0$, we have $(\mu\uplus \delta_a)^{\uplus t} = \mu^{\uplus t} \uplus \delta_{at}$. Lemma \ref{lem51} then implies that $\mu^{\uplus t}$ is freely infinitely divisible if and only if $(\mu \uplus \delta_a)^{\uplus t}$ is so. Therefore, $\phi(\mu) = \phi(\mu \uplus \delta_a)$ from Corollary \ref{cor90}.

The second identity follows similarly from the identity
$$
(\mu \boxplus \delta_a)^{\uplus t} = (\mu^{\uplus t} \boxplus \delta_a) \uplus \delta_{(t-1)a}.
$$
\end{proof}

\subsection{On Jacobi parameters and free Meixner laws}
In this section, we give an upper bound for the free divisibility indicator in terms of Jacobi parameters. We start from the definition of Jacobi parameters~\cite{H-O}. For a probability measure $\mu$ with all finite moments, let us orthogonalize the sequence $(1,x,x^2,x^3,\cdots )$ in the Hilbert space $L^2(\real,\mu)$, following the Gram-Schmidt method. This procedure yields orthogonal polynomials $(P_0(x), P_1(x), P_2(x), \cdots)$ with $\text{deg}\, P_n(x) =n$. Multiplying constants, we take $P_n$ to be monic, i.e., the coefficient of $x^n$ is one. It is known that they satisfy a recurrence relation
\[
xP_n(x) = P_{n+1}(x) +\beta_nP_n(x) + \gamma_{n-1} P_{n-1}(x)
\]
for $n \geq 0$, under the convention that $P_{-1}(x)=0$. The coefficients $\beta_n$ and $\gamma_n$ are called Jacobi parameters and they satisfy $\beta_n \in \real$ and $\gamma_n \geq 0$. Indeed, it is known that $\gamma_0 \cdots \gamma_n=\int_{\real}|P_{n+1}(x)|^2\mu(dx)$ for $n \geq 0$. Moreover, the measure $\mu$ has a finite support of cardinality $N$ if and only if $\gamma_{N-1}=0$ and $\gamma_n > 0$ for $n = 0,\cdots, N-2$.
We write Jacobi parameters as
\[
J(\mu )=\left(
\begin{array}{ccccc}
\beta _{0}, & \beta _{1}, & \beta _{2}, & \beta _{3}, & ... \\
\gamma _0, & \gamma _1, & \gamma _2, & \gamma _3, & ...%
\end{array}%
\right).
\]
Continued fraction representation of $G_\mu$ can be expressed in terms of the Jacobi Parameters:
\[
\int_{\real}\frac{\mu(dx)}{z-x} = \dfrac{1}{z-\beta_0 -\dfrac{\gamma_0}{z-\beta_1-\dfrac{\gamma_1}{z- \beta_2 - \cdots}}}.
\]
This is useful to calculate $G_\mu$ from the Jacobi parameters.

In this section, we also consider Jacobi parameters $\beta_0,\beta_1, \gamma_0,\gamma_1$ for a probability measure $\mu$ with a finite fourth moment. We can define them just by orthogonalizing the sequence $(1,x,x^2)$ in $L^2(\real, \mu)$. The resulting orthogonal monic polynomials $(P_0(x), P_1(x), P_2(x))$ are expressed as
\[
P_0(x)=1,~~ P_1(x) = x-\beta_0,~~P_2(x)=(x-\beta_0)(x-\beta_1)-\gamma_0
\]
for some real numbers $\beta_0,\beta_1,\gamma_0$. Then $\gamma_0 =  \int_\real|P_1(x)|^2\mu(dx)$. $\beta_0$ is the mean of $\mu$ and $\gamma_0$ coincides with the variance of $\mu$. If $\gamma_0 =0$, we define $\gamma_1:= 0$, and if $\gamma_0\neq 0$, we define $\gamma_1 := \gamma_0^{-1}\int_\real|P_2(x)|^2\mu(dx)$.

\begin{prop}\label{prop38}
Let $\mu\neq\delta_a$ be a probability measure with a finite fourth moment. Let us consider the Jacobi parameters $\left(\gamma _{i},\beta _{i}\right)_{i=0,1}$ of $\mu$. Then $\phi (\mu )\leq \gamma _{1}/\gamma _{0}$.
\end{prop}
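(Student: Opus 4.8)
The plan is to use the Boolean characterization of the indicator from Corollary \ref{cor90}, namely $\phi(\mu) = \sup\{t \geq 0 : \mu^{\uplus t} \text{ is } \boxplus\text{-infinitely divisible}\}$, which reduces the claim to showing: whenever $\mu^{\uplus t}$ is $\boxplus$-infinitely divisible, one has $t \leq \gamma_1/\gamma_0$. I assume $\gamma_0 > 0$ (otherwise $\mu$ is a point mass, governed by the convention $\gamma_1 := 0$). Since both $\phi$ and the ratio $\gamma_1/\gamma_0$ are shift-invariant --- $\phi$ by Proposition \ref{prop47}, and $\gamma_0,\gamma_1$ because translation leaves the off-diagonal Jacobi parameters unchanged while shifting only the $\beta_n$ --- I may also assume $\beta_0 = 0$, i.e.\ $\mu$ has mean zero; then $\mu^{\uplus t}$ has mean zero as well.

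The first ingredient is a necessary condition for free infinite divisibility expressed in free cumulants $\kappa_n$, the coefficients of $\phi_\nu(z) = \sum_{n\ge1}\kappa_n z^{1-n}$. Expanding the L\'evy--Khintchine representation $\phi_\nu(z) = \gamma_\nu + \int_\real \frac{1+xz}{z-x}\,\tau_\nu(dx)$ at $z=\infty$ yields $\kappa_n = \int_\real x^{n-2}(1+x^2)\,\tau_\nu(dx)$ for every $n \geq 2$. Thus $(\kappa_n)_{n\ge2}$ is the moment sequence of the positive measure $d\rho := (1+x^2)\,d\tau_\nu$; in particular, provided $\nu$ has finite fourth moment so that $\kappa_2,\kappa_3,\kappa_4$ are finite, the Cauchy--Schwarz inequality gives $\kappa_3^2 = (\int x\,d\rho)^2 \leq (\int d\rho)(\int x^2\,d\rho) = \kappa_2\kappa_4$. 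This is the inequality I will violate for large $t$.

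The second ingredient is the explicit computation of $\kappa_2,\kappa_3,\kappa_4$ of $\nu := \mu^{\uplus t}$. The key simplification is that the Boolean cumulants $r_n$ (the coefficients of $K_\nu$) scale linearly, $r_n(\mu^{\uplus t}) = t\,r_n(\mu)$, because $K_{\mu^{\uplus t}} = tK_\mu$. Expanding $K_\mu = z - F_\mu$ at infinity with $\beta_0=0$ gives $r_2(\mu)=\gamma_0$, $r_3(\mu)=\gamma_0\beta_1$, $r_4(\mu)=\gamma_0(\gamma_1+\beta_1^2)$, obtained from the moments $m_2=\gamma_0$, $m_3=\gamma_0\beta_1$, $m_4=\gamma_0(\gamma_0+\gamma_1+\beta_1^2)$ read off the continued-fraction (weighted-path) expansion. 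Converting back through the moment--Boolean-cumulant and moment--free-cumulant relations for a mean-zero measure yields $\kappa_2(\nu)=t\gamma_0$, $\kappa_3(\nu)=t\gamma_0\beta_1$, $\kappa_4(\nu)=t\gamma_0(\gamma_1+\beta_1^2)-t^2\gamma_0^2$, whence
\[
\kappa_2(\nu)\kappa_4(\nu) - \kappa_3(\nu)^2 = t^2\gamma_0^2(\gamma_1 - t\gamma_0).
\]
Combining with the necessary condition, if $\mu^{\uplus t}$ is $\boxplus$-infinitely divisible then $\gamma_1 - t\gamma_0 \geq 0$, i.e.\ $t \leq \gamma_1/\gamma_0$; taking the supremum over such $t$ gives $\phi(\mu) \leq \gamma_1/\gamma_0$.

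The main obstacle is bookkeeping rather than conceptual: I must pass cleanly between three encodings --- Boolean cumulants (which scale trivially under $\uplus t$), ordinary moments, and free cumulants (whose lowest Hankel determinant $\kappa_2\kappa_4-\kappa_3^2$ detects free infinite divisibility) --- and verify that the finite-fourth-moment hypothesis is preserved under the Boolean power, so that all quantities are finite and the Cauchy--Schwarz step is legitimate. Characterizing the equality case (only free Meixner laws) would require understanding when $(\kappa_n)_{n\ge2}$ can be the moment sequence of a single atom, but for the stated inequality only the $2\times2$ determinant above is needed.
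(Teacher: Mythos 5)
Your proposal is correct, and its skeleton is the paper's: both proofs reduce the statement, via Corollary \ref{cor90} (equivalently Theorem \ref{thm01}), to showing that $\boxplus$-infinite divisibility of $\mu^{\uplus t}$ forces $t \leq \gamma_1/\gamma_0$. The difference is in how that implication is established. The paper's proof is two lines of citations: it quotes \cite{BW2} for the fact that the Boolean power by $t$ multiplies $\beta_0$ and $\gamma_0$ by $t$ while leaving $\gamma_1$ unchanged, and \cite{Ml} for the fact that free infinite divisibility implies $\gamma_0 \leq \gamma_1$ (still valid with only a finite fourth moment). You instead re-derive both ingredients from scratch, working with cumulants rather than Jacobi parameters: the linear scaling $r_n(\mu^{\uplus t}) = t\,r_n(\mu)$ of Boolean cumulants, the conversion to free cumulants for a centered measure, and --- this is your genuinely different key lemma --- the obstruction $\kappa_3^2 \leq \kappa_2\kappa_4$ for freely infinitely divisible laws, obtained by reading $(\kappa_{n+2})_{n\geq 0}$ as the moment sequence of the positive measure $(1+x^2)\,\tau_\nu(dx)$ and applying Cauchy--Schwarz. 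Since your own computation gives $\kappa_2\kappa_4-\kappa_3^2 = \gamma_0^2(\gamma_1-\gamma_0)$ for a centered measure, your Hankel inequality is precisely the cited Młotkowski inequality; so what your route buys is a self-contained proof that also exposes where the bound comes from (positivity of the free Lévy measure), at the cost of the bookkeeping the paper avoids by citation. All your identities check out ($m_4 = \gamma_0(\gamma_0+\gamma_1+\beta_1^2)$, $\kappa_4(\mu^{\uplus t}) = t\gamma_0(\gamma_1+\beta_1^2)-t^2\gamma_0^2$, and the determinant $t^2\gamma_0^2(\gamma_1-t\gamma_0)$), and the use of Proposition \ref{prop47} to center $\mu$ is legitimate since shifting leaves the $\gamma_n$ unchanged.

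One step you flag as bookkeeping does deserve an actual argument: the identity $\kappa_n = \int_\real x^{n-2}(1+x^2)\,\tau_\nu(dx)$ for $n=2,3,4$ is not purely formal when $\nu$ is only assumed to have a finite fourth moment, because one must first know that these integrals are finite. This is standard --- e.g.\ $\kappa_2 = \lim_{y\to\infty}\int_\real \frac{y^2(1+x^2)}{y^2+x^2}\,\tau_\nu(dx)$ by monotone convergence from the non-tangential expansion of $\phi_\nu$, and the cases $n=3,4$ follow similarly --- but it should be stated rather than absorbed into the formal expansion at $z=\infty$. Likewise, preservation of the finite fourth moment under $\uplus t$ is immediate from $F_{\mu^{\uplus t}} = (1-t)z + tF_\mu$ and the asymptotic-expansion characterization of moments. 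Neither point is a gap in substance, only in exposition.
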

\begin{proof}
It was observed in \cite{BW2} that the Boolean power by $t$ is nothing else than multiplying both $\beta _{0}$ and $\gamma _{0}$ by $t$.
On the other hand, it was proved in \cite{Ml} that if a probability measure is $\boxplus $-infinitely divisible, then $\gamma _{0}\leq \gamma _{1}$ (this is still true under the assumption of finite fourth moment). Therefore, $\phi (\mu )\leq \gamma _{1}/\gamma _{0}$ from Theorem \ref{thm01}.
\end{proof}

\begin{cor}
Let $\mu\neq\delta_a$ be a probability measure with finite fourth moment. Then $\mu^{\uplus t} \notin ID(\boxplus)$ for $t$ large enough.
\end{cor}

The last proposition extends Theorem 3.7 in \cite{APA} which gives an upper bound for the divisibility indicator in terms of the Boolean kurtosis: $Kurt^\uplus(\mu)\geq\phi(\mu)$. Indeed, for a probability measure with mean zero and finite fourth moment, we have $Kurt^\uplus(\mu) = (\gamma _{1}+\beta_1^2)/\gamma _{0} \geq \gamma_1 /\gamma_0$. Therefore, Proposition \ref{prop38} gives a sharper estimate.
\begin{exa}The family of $q$-Gaussian distributions introduced by
Bo\.{z}ejko and Speicher in \cite{BS} (see also the paper \cite{BKS} of Bo\.{z}ejko, K\"{u}mmerer and
Speicher) interpolate between the normal ($q = 1$) and the semicircle ($q = 0$) laws. It is determined in terms of their orthogonal polynomials $H_{n,q}(x)$, called the $q$-Hermite polynomials, via the recurrence relation
\begin{equation*}
xH_{n,q}(x) = H_{n+1,q}(x) +\frac{1-q^n}{1-q} H_{n-1,q}(x).
\end{equation*}
It was proved in \cite{ABBL} that the $q$-Gaussian distributions are freely infinitely divisible for all $q\in[0,1]$. A direct application of the Proposition \ref{prop38} gives the estimate $\phi (\mu )\leq \gamma _{1}/\gamma _{0}=1+q$. In particular, this shows that the $q$-Gaussian distributions are not freely infinitely divisible when $q\in[-1,0)$.
\end{exa}
Notice from last example that the semicircle distribution satisfies the equality $\phi (\mu )= \gamma _{1}/\gamma _{0}$. In the next example we will see that the class of free Meixner distributions also satisfies this equality.
\begin{exa}
The free Meixner distributions $\mu _{\beta _{0},\gamma _{0},\beta_{1},\gamma _{1}}$ are probability measures with Jacobi parameters
\[
J(\mu )=\left(
\begin{array}{ccccc}
\beta _{0}, & \beta _{1}, & \beta _{1}, & \beta _{1}, & ... \\
\gamma _0, & \gamma _1, & \gamma _1, & \gamma _1, & ...%
\end{array}%
\right)
\]%
where $\beta _{0}, \beta_{1} \in {\mathbb{R}} $ and $\gamma _{0},\gamma_{1}\geq 0 $. More explicitly, $\mu _{0,1,b,1+c}$ is written as
\[
\frac{1}{2\pi }\cdot   \frac{\sqrt{[4(1+c)-(x-b)^{2}]_+}}{1+bx+cx^{2}}dx+ (\textup{0, 1 or 2 atoms),}
\]%
where $f(x)_+$ is defined by $\max\{f(x),0 \}$.
General free Meixner distributions are affine transformations of this case; see \cite{Ans}, \cite{BB}.

Let $\gamma=\gamma _{1}-\gamma _{0}$ and $\beta=\beta _{1}-\beta _{0}$. Then
\begin{equation}\label{eq100}
J(\mu ^{\boxplus t})=\left(
\begin{array}{ccccc}
\beta _{1}t, & \beta+\beta _{0}t, & \beta+\beta _{0}t, &\beta+\beta _{0}t, & ... \\
\gamma _{0}t, & \gamma+\gamma _{0}t, & \gamma+\gamma _{0}t, &\gamma+\gamma _{0}t, & ...%
\end{array}%
\right),
\end{equation}
as shown in \cite{A-M}.
On the other hand, from \cite{BW2}, we have
\begin{equation}\label{eq101}
J(\mu ^{\uplus s})=\left(
\begin{array}{ccccc}
\beta _{0}s, & \beta _{1}, & \beta _{1}, & \beta _{1}, & ... \\
\gamma _{0}s, & \gamma _1, & \gamma _1, & \gamma _1, & ...%
\end{array}%
\right).
\end{equation}

Now, it is easy to derive the free divisibility indicator of $\mu _{\beta _{0},\gamma _{0},\beta_{1},\gamma _{1}}$. Let us denote by $\gamma_i(\nu)$ Jacobi parameters of $\nu$ to distinguish a probability measure. We get $\gamma_1(\mu _{\beta _{0},\gamma _{0},\beta_{1},\gamma _1}^{\uplus t})-\gamma_0(\mu _{\beta _{0},\gamma _0, \beta_{1},\gamma _1}^{\uplus t}) = \gamma_1(\mu _{\beta _{0},\gamma _{0},\beta_{1},\gamma _1}) - t \gamma_0(\mu _{\beta _{0},\gamma _{0},\beta_{1},\gamma _1})$ using (\ref{eq101}). By the way, Saitoh and Yoshida \cite{S-Y} showed that a free Meixner distribution $\mu$ is $\boxplus$-infinitely divisible if and only if $\gamma_{1}(\mu)-\gamma_{0}(\mu) \geq 0$. Therefore, we conclude that
\[
\phi(\mu _{\beta _{0},\gamma _{0},\beta_{1},\gamma _{1}}) = \gamma_1(\mu _{\beta _{0},\gamma _{0},\beta_{1},\gamma _1})/\gamma_0(\mu _{\beta _{0},\gamma _{0},\beta_{1},\gamma _1})
\]
from Corollary \ref{cor90}.
In particular, for a so-called Kesten-McKay distribution $\mu _{t}$,
with absolutely continuous part
\[
\frac{1}{2\pi }\cdot \frac{\sqrt{4t-x^{2}}}{1-(1-t)x^{2}},
\]%
we get  $\phi (\mu _{t})=t$. This distribution was studied by
Kesten \cite{Kes} in connection to simple random walks on free groups. They appear in free probability theory as free additive powers of a Bernoulli distribution
. For relations between $d$-regular graphs and these distributions, see \cite{McK}.
\end{exa}

As shown in the above example, every free Meixner distribution achieves the upper bound of Proposition \ref{prop38}. More strongly, this characterizes free Meixner distributions.
\begin{thm} Let $\mu\neq\delta_a$ be a probability measure with a finite fourth moment and Jacobi parameters $\left(\gamma _{i},\beta _{i}\right)_{i=0,1}$. Then $\phi (\mu) = \gamma _{1}/\gamma _{0}$ if and only if $\mu$ is a free Meixner distribution.
\end{thm}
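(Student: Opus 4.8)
The ``if'' direction is already contained in the Example preceding the theorem, where every free Meixner law is shown to satisfy $\phi(\mu)=\gamma_1/\gamma_0$; so the plan is to establish the converse. Assume $\gamma_0>0$ and $\gamma_1>0$ (the remaining cases $\gamma_0=0$ or $\gamma_1=0$ give point or two-point masses, which are degenerate free Meixner laws), and set $T:=\gamma_1/\gamma_0=\phi(\mu)$. The first step is to pass to the measure $\nu:=\mu^{\uplus T}$. Since the Boolean power by $T$ multiplies $\beta_0,\gamma_0$ by $T$ and leaves the remaining Jacobi parameters unchanged (as recalled in the proof of Proposition \ref{prop38}), we get $\gamma_0(\nu)=T\gamma_0=\gamma_1$ and $\gamma_1(\nu)=\gamma_1$, so that $\gamma_0(\nu)=\gamma_1(\nu)$. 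Moreover, Theorem \ref{thm01} gives $\phi(\nu)=\phi(\mu)/T=1$, so by Theorem \ref{thm001}(2) the measure $\nu$ is $\boxplus$-infinitely divisible. Thus everything is reduced to identifying the $\boxplus$-infinitely divisible measures whose Jacobi parameters $\gamma_0$ and $\gamma_1$ coincide, and then transporting the conclusion back to $\mu$.

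The heart of the argument is the equality case of M\l otkowski's inequality $\gamma_0\le\gamma_1$ \cite{Ml}, which I would make explicit through free cumulants. Writing the free L\'evy--Khintchine representation $\phi_\nu(z)=\gamma+\int\frac{1+xz}{z-x}\,\tau_\nu(dx)$ and expanding the $R$-transform $R_\nu(u)=\phi_\nu(1/u)$, one finds $\kappa_{n+2}(\nu)=\int x^n(1+x^2)\,\tau_\nu(dx)=m_n(\rho)$ for $n\ge 0$, where $\rho:=(1+x^2)\,\tau_\nu$ is a finite measure whose first three moments are finite because $\mu$ has a finite fourth moment. Since free cumulants of order $\ge 2$ are shift invariant, the standard relations $\gamma_0=\kappa_2$ and $\gamma_1=\kappa_4/\kappa_2+\kappa_2-\kappa_3^2/\kappa_2^2$ yield
\[
\gamma_1(\nu)-\gamma_0(\nu)=\frac{\kappa_2\kappa_4-\kappa_3^2}{\kappa_2^2}=\frac{m_0(\rho)\,m_2(\rho)-m_1(\rho)^2}{\kappa_2^2}.
\]
The right-hand side is a Hankel determinant of $\rho$, so $\gamma_1(\nu)=\gamma_0(\nu)$ is precisely the equality case of the Cauchy--Schwarz inequality $m_1(\rho)^2\le m_0(\rho)\,m_2(\rho)$. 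As $\kappa_2=\gamma_0(\nu)>0$ forces $\rho\neq 0$, equality forces $\rho$, and hence $\tau_\nu$, to be a single atom $\tau_\nu=c\,\delta_a$.

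It remains to recognize a $\boxplus$-infinitely divisible law with one-point L\'evy measure as free Meixner. With $\tau_\nu=c\,\delta_a$ the Voiculescu transform becomes the M\"obius function $\phi_\nu(z)=\gamma+c\,\frac{1+az}{z-a}$, and substituting $w=F_\nu(z)$ into $z=w+\phi_\nu(w)$ shows, after clearing denominators, that $F_\nu$ satisfies a quadratic equation whose coefficients are polynomials of degree at most one in $z$. By the characterization of free Meixner laws through such a quadratic equation for $G_\nu$ (equivalently, the stabilization of the Jacobi parameters from index one onward; see \cite{Ans,BB}), $\nu$ is a free Meixner distribution. Finally $\mu=\nu^{\uplus 1/T}$, and formula (\ref{eq101}) shows that Boolean powers preserve the free Meixner class, so $\mu$ is itself free Meixner, as desired.

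I expect the main obstacle to be the middle step: correctly identifying the free cumulants of $\nu$ of order $\ge 2$ with the moments of the finite measure $\rho=(1+x^2)\tau_\nu$, so that the inequality $\gamma_0\le\gamma_1$ and, crucially, its equality case become a transparent Cauchy--Schwarz statement. Once the equality case is pinned to a one-point L\'evy measure, the identification with free Meixner laws is a direct computation with the known quadratic-equation characterization.
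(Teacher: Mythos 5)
Your proposal is correct, but it takes a genuinely different route from the paper's. The paper inverts the Belinschi--Nica flow: it writes $\mu=\mathbb{B}_t(\nu)$ with $t=\phi(\mu)$, deduces $\phi(\nu)=0$ from Theorem \ref{thm001}(3), computes $\gamma_0(\nu)=\gamma_0(\mu)$ and $\gamma_1(\mu)=\gamma_1(\nu)+t\,\gamma_0(\nu)$ from the Jacobi-parameter identities of \cite{Ml} and \cite{BW2}, concludes $\gamma_1(\nu)=0$, i.e.\ $\nu$ is a Bernoulli law, and finishes because (\ref{eq100})--(\ref{eq101}) show the free Meixner class is stable under $\mathbb{B}_t$. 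You instead flow in the opposite direction along the Boolean semigroup: your $\nu=\mu^{\uplus T}$ sits exactly at the divisibility threshold $\phi(\nu)=1$, hence is $\boxplus$-infinitely divisible with $\gamma_0(\nu)=\gamma_1(\nu)$, and you then characterize the equality case of M\l{}otkowski's inequality --- made transparent as Cauchy--Schwarz for $\rho=(1+x^2)\tau_\nu$ --- to pin the free L\'evy measure to a single atom, so that $\nu$ is a shifted free Poisson (or semicircle) law, hence free Meixner, and (\ref{eq101}) pulls this back to $\mu$. The two auxiliary measures are in fact exchanged by $\Lambda_B=\mathbb{B}_1$, since the Bercovici--Pata image of the paper's Bernoulli generator is your one-atom free compound Poisson. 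What your route buys is a standalone strengthening of \cite{Ml} of independent interest: among $\boxplus$-infinitely divisible laws with finite fourth moment, $\gamma_0=\gamma_1$ holds precisely for free compound Poisson laws with one-point jump distribution, up to shift. What it costs is two inputs the paper's Jacobi-parameter calculus avoids and which you should state with care (as you yourself flag): first, the moment transfer between $\nu$ and its L\'evy measure --- that a finite fourth moment of $\nu$ forces $m_2(\rho)<\infty$ and $\kappa_{n+2}(\nu)=m_n(\rho)$, which is cleanest via the identity $\phi_\nu=\kappa_1+G_\rho$ together with the standard asymptotic-expansion criterion for Cauchy transforms of finite measures; second, the identification of one-atom-L\'evy-measure laws as free Meixner, which your quadratic-equation argument (or a direct computation of the resulting geometric cumulant sequence) does establish.
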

\begin{proof}
Let us use the notation $\gamma_i(\mu)$ to distinguish probability measures.
We assume that $\phi (\mu) = \gamma_{1}(\mu)/\gamma _{0}(\mu)$.
If $\gamma_0(\mu) = 0$, $\mu$ is a delta measure at a point, so that $\phi(\mu)=\infty$. We assume that the variance $\gamma_0(\mu)$ is nonzero.

First, let us suppose that $\gamma_1(\mu)=0$. From the paragraph previous to Proposition \ref{prop38}, the $L^2$-norm of $P_2(x)$ is zero, which implies that $\mu$ is supported on at most two points. In other words, $\mu$ is of the form $p\delta_a + (1-p)\delta_b$, i.e., $\mu$ is a Bernoulli law. This is a free Meixner law since its Jacobi parameters are given by
\[
J(\mu)=\left(
\begin{array}{ccccc}
\beta _{0}(\mu), & \beta _{1}(\mu), & \beta _{1}(\mu), & \beta _{1}(\mu), & ... \\
\gamma _{0}(\mu), & 0,& 0, & 0, & ...%
\end{array}%
\right),
\]
where $\beta_0(\mu) = pa + (1-p)b$, $\gamma_0(\mu)= p(1-p)(b-a)^2$ and $\beta_1(\mu)= pb + (1-p)a$. This calculation can be checked by using the continued fraction of the Stieltjes transform.

Next let us assume that $t:=\phi(\mu)=\gamma_1(\mu)/\gamma_0(\mu) >0$. Then there exists $\nu$ such that $\mathbb{B}_t(\nu)=\mu$. Using the relation $\phi(\mathbb{B}_t(\mu))=\phi(\mu)+t$, we conclude $\phi(\nu)=0$. Since $\gamma_0$ is equal to variance, $\gamma_0(\mu)=\gamma_0(\nu^{\boxplus (1+t)})/(1+t) = \gamma_0(\nu)$. From the relation (3.14) of \cite{Ml} and (3.9) of \cite{BW2},
\[
\gamma_1(\mu)
= \gamma_1(\nu^{\boxplus (1+t)}) = \gamma_1(\nu) +t\gamma_0(\nu).
\]
Divided by $\gamma_0(\mu)=\gamma_0(\nu)$, the above equality becomes
\[
\frac{\gamma_1(\mu)}{\gamma_0(\mu)} = \frac{\gamma_1(\nu)}{\gamma_0(\nu)} + t.
\]
Using the assumption $\phi(\mu)=\gamma_1(\mu)/\gamma_0(\mu)$, we have
\[
\phi(\mu)= t = \frac{\gamma_1(\mu)}{\gamma_0(\mu)} = \frac{\gamma_1(\nu)}{\gamma_0(\nu)} + t.
\]
Therefore, $\gamma_1(\nu)=0$, so that $\nu$ is a Bernoulli law. (\ref{eq100}) and (\ref{eq101}) imply that the set of the free Meixner laws is closed under the operation $\mathbb{B}_t$, so that $\mu=\mathbb{B}_t(\nu)$ is also a free Meixner law.
\end{proof}

\subsection{On free stable and Boolean stable laws}
We investigate free and Boolean stable distributions in terms of free divisibility indicators.
Free and Boolean stable laws were respectively introduced in \cite{Be-Vo} and \cite{S-W}, and their domains of attraction were studied in \cite{Be-Pa}.
Using Theorem \ref{thm01}, we show that the free divisibility indicator of a Boolean stable distribution takes only two possible values, $0$ or $\infty $. Similarly, in the free case we get two possibilities, $1$ or $\infty .$

Let $D_t$ denote the dilation operation defined by $(D_t \mu)(B):=\mu(t^{-1}B)$ for every Borel set $B$. Since, $\mathbb{B}_{t}(D_{s}(\mu))=D_{s}(\mathbb{B}_{t}(\mu ))$, for $t,s>0$, $\phi (\mu )$ does not change under dilations.

In this paper, a probability measure $\nu _{\alpha }$ is said to be $\uplus$-stable of index $\alpha $ if $\nu _{\alpha }\uplus \nu _{\alpha }=D_{1/2^{\alpha }}(\nu
_{\alpha })\uplus\delta_b$, for some $b\in\mathbb{R}$. If $b=0$ the probability measure $\nu _{\alpha }$ is said to be $\uplus$-strictly stable of index $
\alpha$. Now it is clear from Theorem \ref{thm01} and Proposition \ref{prop47} that a $\uplus$-stable law $\nu _{\alpha }$
satisfies
\[
\frac{1}{2}\phi (\nu _{\alpha })=\phi (\nu _{\alpha }\uplus \nu _{\alpha
})=\phi (D_{1/2^{\alpha }}(\nu _{\alpha }))=\phi (\nu _{\alpha })\text{ }
\]%
which is possible only if $\phi (\nu _{\alpha })=0$ or $\phi (\nu _{\alpha
})=\infty $, depending on whether $\mu $ is freely infinitely divisible or
not. Note that both $0$ and $\infty$ can be achieved since the Cauchy
distribution ($\alpha =1$) is freely infinitely divisible and the Bernoulli
distribution ($\alpha =2$) is not.

Analogously, a probability measure $\sigma _{\alpha }$ is said to be $\boxplus$-stable of index $%
\alpha $ if $\sigma _{\alpha }\boxplus \sigma _{\alpha }=D_{1/2^{\alpha
}}(\sigma _{\alpha })\boxplus\delta_b$, for some $b\in\mathbb{R}$. Also, $\sigma _{\alpha }$ is said to be $\boxplus$-strictly stable when $b=0$. As in the Boolean case, we see that
\[
\frac{1}{2}(1-\phi (\sigma _{\alpha }))=(1-\phi (\sigma _{\alpha })).
\]
This yields only two possibilities, either $\phi (\sigma _{\alpha })=1$ or $%
\phi (\sigma _{\alpha })=\infty$. Also in the free case, both
values $1$ and $\infty$ can be achieved: the semicircle law ($\alpha =2$)
satisfies $\phi (\sigma _{2})=1$~\cite{Bel2}; the Cauchy distribution $\sigma_1$, which is $\boxplus$-strictly stable of index $\alpha =1$, satisfies $\phi(\sigma _{1})=\infty$. Later we also consider free and Boolean $1/2$-stable laws.
\begin{rem}\label{rem95}
An upper bound can be estimated for the free divisibility indicator of the Gaussian distribution $N(0,1)$, i.e., the $2$-stable law in probability theory. Indeed, one can
show, using Theorem \ref{thm01} and numerical computations of cumulants\footnote{The authors would like to thank Professor Franz Lehner for his assistance to improve these numerical computations. }, that $\phi (N(0,1))<1.2$. The lower bound $\phi (N(0,1))\geq 1$,
which is much harder to prove, is implicit in Belinschi et al.~\cite{BBLR}.
\end{rem}
The free divisibility indicators of Cauchy and delta distributions are infinity since they are the fixed points of $\mathbb{B}_t$~\cite{Bel2}. The following theorem shows that there are other measures
with their free divisibility indicators equal to $\infty$, while they are not fixed points of $\mathbb{B}_t$.

\begin{thm}
Any $\uplus$-stable distribution $\nu$ of index $1/2$ is $\boxplus$-infinitely divisible. Moreover, $\phi(\nu)=\infty$.
\end{thm}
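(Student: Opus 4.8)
The plan is to leverage the dichotomy already established just before the statement: for a $\uplus$-stable law $\nu$ of index $1/2$ the chain $\tfrac12\phi(\nu) = \phi(\nu \uplus \nu) = \phi(\nu)$ forces $\phi(\nu) \in \{0,\infty\}$, and by Theorem \ref{thm001}(2) we have $\phi(\nu) = \infty$ precisely when $\nu$ is $\boxplus$-infinitely divisible. Thus the entire theorem reduces to showing that $\nu$ is freely infinitely divisible, and I would do this by writing down the Voiculescu transform $\phi_\nu$ explicitly and verifying the Nevanlinna criterion recalled in Section \ref{sec1}: $\phi_\nu$ must continue analytically to $\comp_+$ with $\im\phi_\nu \le 0$ there.

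First I would record the explicit form of the energy function. Since $K_{D_t\mu}(z) = tK_\mu(z/t)$ and $K_{\delta_b}(z) = b$, the defining identity $\nu\uplus\nu = D_s(\nu)\uplus\delta_b$ linearises under $K$ into a functional equation whose only admissible analytic solution is homogeneous of degree $1-\alpha = 1/2$; rigidity against log-periodic corrections comes from the boundary behaviour of an energy function. Hence $K_\nu(z) = \gamma + c\sqrt{z}$ for a real constant $\gamma$ (carrying the shift $b$) and a complex constant $c$, with the principal branch of the square root. Moreover the requirement that $\nu$ be a genuine probability measure, i.e. that $K_\nu$ map $\comp_+$ into $\comp_-\cup\real$, forces $\arg c \in [\pi, 3\pi/2]$. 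I would cite the known classification of Boolean stable laws for this explicit form.

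Next I would invert $F_\nu(z) = z - \gamma - c\sqrt{z}$. Writing $u = \sqrt{z}$ turns $w = F_\nu(z)$ into the quadratic $u^2 - cu - (\gamma+w) = 0$, and choosing the branch with $F_\nu^{-1}(w) \sim w$ at infinity yields
\[
\phi_\nu(w) = \Big(\gamma + \tfrac{c^2}{2}\Big) + c\,\sqrt{\,w + \gamma + \tfrac{c^2}{4}\,}.
\]
Because $\arg c \in [\pi,3\pi/2]$ we have $c^2 \in \comp_+\cup\real$, so the branch point $-\gamma - c^2/4$ lies in $\comp_-\cup\real$; this guarantees that $\phi_\nu$ continues analytically to all of $\comp_+$, which settles the first half of the criterion. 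Note that a real shift of $\nu$ changes only $\gamma$, hence affects neither $\im(c^2)$ nor the imaginary part of the branch point, in accordance with the shift-invariance of $\phi$ in Proposition \ref{prop47}; so for the sign analysis I may take $\gamma = 0$.

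The main obstacle is the inequality $\im\phi_\nu \le 0$. Setting $D = \gamma + c^2/4$, the function $\im\big({-c}\sqrt{w+D}\big)$ is harmonic on $\comp_+$, so by the minimum principle the required inequality $\im\big({-c}\sqrt{w+D}\big) \ge \tfrac12\im(c^2)$ need only be checked as $w$ approaches the real boundary, the value at infinity tending to $+\infty$. On that boundary, parametrising $w+D = \rho e^{i\theta}$ and writing $\beta := \arg(-c) = \arg c - \pi \in [0,\pi/2]$, a short computation collapses the inequality to the perfect-square identity $\sin^2(\beta - \theta/2) \ge 0$, with equality only at $\theta = 2\beta$. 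Hence $\im\phi_\nu \le 0$ throughout $\comp_+$, so $\nu$ is $\boxplus$-infinitely divisible and $\phi(\nu) = \infty$. I expect the careful bookkeeping of the square-root branch dictated by $\arg c \in [\pi,3\pi/2]$, together with this tight boundary reduction, to be the only genuinely delicate points; everything else is algebra and the general facts already available in Theorem \ref{thm01}, Corollary \ref{cor90} and Proposition \ref{prop47}.
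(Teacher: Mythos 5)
Your proposal is correct, and its skeleton is the same as the paper's: reduce to the strictly stable case via Proposition \ref{prop47} (your reduction to $\gamma=0$ is exactly this), take the explicit form of the law from the known classification (the paper cites the same fact as $F_\nu(z)=z+bz^{1/2}$, $0\le\arg b\le\pi/2$, which matches your $K_\nu(z)=\gamma+c\sqrt{z}$ under $c=-b$), compute the Voiculescu transform (your $\phi_\nu(w)=(\gamma+c^2/2)+c\sqrt{w+\gamma+c^2/4}$ is the paper's $\phi_\nu(z)=\frac{b^2}{2}-\sqrt{b^2z+b^4/4}$), verify $\im \phi_\nu\le 0$ by reducing to the real boundary, and finish with the stable-law dichotomy $\phi(\nu)\in\{0,\infty\}$. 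Where you genuinely diverge is in the two technical steps, and both of your versions are arguably cleaner. For the reduction to the boundary, the paper extends $\phi_\nu$ homeomorphically to $(\comp_+\cup\real)\cup\{\infty\}$ and runs a Jordan-curve argument, while you invoke the minimum principle for the harmonic function $\im(-c\sqrt{w+D})$, $D=\gamma+c^2/4$. This works, but your justification at infinity is imprecise: with $\beta=\arg(-c)\in[0,\pi/2]$, the function tends to $+\infty$ uniformly as $|w|\to\infty$ only when $\beta\in(0,\pi/2)$; in the degenerate cases $\beta\in\{0,\pi/2\}$ (equivalently $\im(c^2)=0$) it can tend to $0$ along paths hugging the real axis. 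That is easily repaired: in those cases the target inequality is just $\im(-c\sqrt{w+D})\ge 0$, immediate since $\arg(-c\sqrt{w+D})\in[\beta,\beta+\pi/2]\subset[0,\pi]$; alternatively, since your harmonic function is non-negative (hence bounded below), a Phragm\'en--Lindel\"of form of the minimum principle applies without any growth claim. For the boundary inequality itself, the paper parametrizes by $x\in\real$ and minimizes a function $f(x)$ by calculus, whereas your polar parametrization collapses it to the identity $\sin^2(\beta+\theta/2)-\sin 2\beta\,\sin\theta=\sin^2(\beta-\theta/2)\ge 0$, which checks out and exhibits the equality locus $\theta=2\beta$ transparently. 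So: same overall strategy and the same dichotomy ending, but two slicker local arguments on your side, modulo the small repair at infinity noted above.
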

\begin{proof}
Thanks to Proposition \ref{prop47}, we may assume that $\nu$ is $\uplus$-strictly stable.
The reciprocal Cauchy transform of $\nu$ is given by~\cite{S-W}%
\[
F_{\nu }(z)=z+bz^{1/2},~~~~0\leq \arg b\leq \frac{\pi}{2}.
\]%
The Voiculescu transform then becomes
\[
\phi _{\nu }(z)=\frac{b^{2}}{2}-\sqrt{b^{2}z+b^{4}/4}.
\]%
$\phi _{\nu }(z)$ can be extended to a continuous function from $(\mathbb{C}^+\cup\mathbb{R}) \cup \{\infty \}$ to $\comp \cup \{\infty\}$. In addition, this mapping is homeomorphic. To understand the situation, it helps us to define $\psi:= J\circ \phi_\nu \circ J^{-1}$ as a function in $\overline{\disc}=\{|z| \leq 1 \}$, where $J:\comp^+ \to \disc$ is an analytic isomorphism. To prove $\nu$ is freely infinitely divisible, it is sufficient to prove that $\im \phi _{\nu }(x)$ takes non-positive values on $\mathbb{R}$. Indeed, let us suppose $\im \phi _{\nu }(x)\leq 0$ for $x\in \mathbb{R}$. This assumption is equivalent to $\psi(\partial \disc) \subset \disc^c = \{z \in \comp:|z| \geq 1\}$. From the homeomorphic property, $\psi(\partial \disc)$ is equal to $\partial \psi(\disc)$, and moreover, is a Jordan curve. Therefore,  $\psi(\partial \disc)$ divides $\comp$ into two simply connected open sets. One does not intersect with $\overline{\disc}$ and the other includes $\disc$. We can easily observe that $\lim_{y\to \infty}\im \phi_\nu(iy) = -\infty$, which implies that $\psi(\disc)$ coincides with the first one.



Now, we prove that $\im \phi _{\nu }(x)\leq 0$ for any $x\in \mathbb{R}$. Let $c=b^{2}$ and suppose, without loss of generality,  that $c=e^{i\theta }$ $(0\leq \theta \leq \pi)$. Then
\[
\im \phi _{\nu }(x)=\frac{\sin \theta }{2}-\frac{1}{\sqrt{2}}\left[
\left(x^{2}+\frac{\cos \theta }{2}x+\frac{1}{16}\right)^{1/2}-x\cos \theta -\frac{\cos
2\theta }{4}\right] ^{1/2}.
\]%
Let us define
\[
f(x):=\left(x^{2}+\frac{\cos \theta }{2}x+\frac{1}{16}\right)^{1/2}-x\cos \theta -\frac{%
\cos 2\theta }{4}.
\]%
It is easy to prove that $f$ decreases in $(-\infty ,0)$ and increases in $%
(0,\infty )$ with its minimum value $f(0)=\frac{\sin^{2}\theta }{2}.$ Therefore, $%
\im \phi _{\nu }(x)$ takes the maximum value $\im\phi _{\nu }(0)=0$, and hence  $\im\phi _{\nu }(x)\leq 0$ for any $x\in
\mathbb{R}$ as we wanted.
The fact $\phi (\nu)=\infty$ now follows from the discussion prior to Remark \ref{rem95}.
\end{proof}

\begin{cor}
Any free $1/2$-stable distribution $\sigma$ satisfies $\phi (\sigma )=\infty$.
\end{cor}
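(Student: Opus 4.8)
The plan is to realize any free $1/2$-stable law as the image under the Bercovici--Pata bijection $\Lambda_B=\mathbb{B}_1$ of a $\uplus$-stable law of the same index, and then read off the indicator from the preceding theorem together with Theorem \ref{thm001}(3). Concretely, given a free $1/2$-stable law $\sigma$, I would first recall that free stable laws are $\boxplus$-infinitely divisible (\cite{Be-Vo}), so that $\phi(\sigma)\geq 1$ by Theorem \ref{thm001}(2); in particular $\sigma$ lies in the range of $\Lambda_B$, and there is a unique $\nu$ with $\mathbb{B}_1(\nu)=\sigma$, namely $\nu=\Lambda_B^{-1}(\sigma)$.

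Next I would transfer the stability functional equation through $\Lambda_B$, for which three compatibility properties suffice. First, $\Lambda_B$ is a homomorphism from $\uplus$ to $\boxplus$ by definition. Second, $\Lambda_B$ fixes every point mass: since $\phi_{\Lambda_B(\rho)}(z)=K_\rho(z)$ and $K_{\delta_b}(z)=z-F_{\delta_b}(z)=b=\phi_{\delta_b}(z)$, we get $\Lambda_B(\delta_b)=\delta_b$. Third, $\Lambda_B$ commutes with dilations, because $\phi_{\Lambda_B(\rho)}=K_\rho$ together with the common scaling rules $K_{D_s\rho}(z)=sK_\rho(z/s)$ and $\phi_{D_s\tau}(z)=s\phi_\tau(z/s)$ forces $\phi_{\Lambda_B(D_s\rho)}=\phi_{D_s\Lambda_B(\rho)}$. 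Applying $\Lambda_B^{-1}$ to the defining relation $\sigma\boxplus\sigma=D_{1/\sqrt2}(\sigma)\boxplus\delta_b$ then yields $\nu\uplus\nu=D_{1/\sqrt2}(\nu)\uplus\delta_b$, so that $\nu$ is $\uplus$-stable of index $1/2$. By the preceding theorem, $\phi(\nu)=\infty$.

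Finally I would conclude with the indicator arithmetic. The cleanest way to avoid manipulating $\infty$ formally is to note that $\phi(\nu)=\infty$ means $\mathbb{B}_s(\nu)$ is a probability measure for every $s\in\real$; then $\mathbb{B}_s(\sigma)=\mathbb{B}_s(\mathbb{B}_1(\nu))=\mathbb{B}_{s+1}(\nu)$ is defined for all $s$, whence $\phi(\sigma)=\infty$ (equivalently $\phi(\sigma)=\phi(\nu)+1=\infty$ by Theorem \ref{thm001}(3)). I expect the only delicate point to be the bookkeeping in the middle paragraph: verifying that $\Lambda_B$ intertwines dilations and fixes point masses, which is exactly what allows the stability equation---including the shift by $\delta_b$---to pass unchanged through the bijection. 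Everything else is immediate from the results already established, so no separate reduction to the strictly stable case is needed.
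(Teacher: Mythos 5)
Your proof is correct and follows essentially the same route as the paper: the paper's own proof is exactly the identification $\sigma = \Lambda_B(\nu)$ with $\nu$ a $\uplus$-stable law of index $1/2$, followed by $\phi(\sigma) = \phi(\nu) + 1 = \infty + 1 = \infty$. The only difference is that the paper asserts the correspondence between free and Boolean $1/2$-stable laws as a known fact, whereas you verify it explicitly through the homomorphism, point-mass and dilation compatibilities of $\Lambda_B$ (and handle the $\infty$ arithmetic via the semigroup), which is a harmless and careful elaboration of the same argument.
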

\begin{proof}
 A $\boxplus$-stable law $\sigma$ of index $1/2$ is just $\Lambda_B(\nu)$ where $\nu$ is a $\uplus$-stable law of index $1/2$. Therefore, we see that $\phi (\sigma )=\phi (\nu )+1=\infty +1=\infty$.
\end{proof}

\section{Composition semigroups for multiplicative convolutions}\label{sec3}
In this section, we prove that many results on additive convolutions have counterparts for multiplicative convolutions.
A transformation $f_\mu(z) = \log(\eta_\mu(e^z))$ is useful to understand such results intuitively.  Indeed,  in terms of $f_\mu$, multiplicative convolutions can be characterized in a way analogous to the additive ones. For instance, the multiplicative free and Boolean convolutions can be characterized by
\[
f^{-1}_{\mu \boxtimes \nu} (z) = f^{-1}_\mu(z) + f^{-1}_\nu(z) -z,~~~f_{\mu \hutimes \nu}(z) = f_\mu(z) + f_\nu(z)-z.
\]
Therefore, $f_\mu(z)$, $f_\mu^{-1}(z)-z$ and $f_\mu(z)-z$ play the same roles as the reciprocal Cauchy transform, the Voiculescu transform and the energy function, respectively. From this observation, we can expect many results on multiplicative convolutions.

However, we cannot avoid the following problems.
\begin{itemize}
\item[(A)] On the unit circle, $\mu^{\hutimes t}$ for $t > 0$ and $\mu^{\boxtimes t}$ for $t > 1$ can be defined for any $\putimes$-infinitely divisible measure $\mu$, but these powers are not unique \cite{Bel3,Fra2}.
\item[(B)] On the positive real line,  Boolean powers of a generic probability measure can be defined only for a finite time \cite{Ber2}.
\end{itemize}
We discuss the above problems in this section.

The essence of the problem (A) can be understood in terms of the universal covering of the Riemannian surface $\disc \backslash \{0\}$. We, however, do not use such a concept to avoid introducing many terminologies.

 \subsection{A composition semigroup on the unit circle}
Let $\id (\putimes; \tor)$ be the set of $\putimes$-infinitely divisible distributions on $\tor$.
Results in this subsection are often trivial for the normalized Haar measure $\omega$, so that we define the set $\id (\putimes; \tor)_0:= \id (\putimes; \tor)\backslash\{\omega \}$. This set can be written as \cite{Fra2}
$$
\id (\putimes; \tor)_0= \{\mu\in \mathcal{P}(\tor): \eta_\mu \text{~does not vanish in~}\disc \text{~and~}m_1(\mu)=\eta_\mu'(0) \neq 0 \},  
$$
on which a multiplicative free power can be defined \cite{Bel3}. Hence this set is important for multiplicative free convolution as well as for Boolean one. 

 On the unit circle $\tor$, the multiplicative Bercovici-Pata bijection from $\putimes$ to $\boxtimes$ was considered in a paper \cite{Wang1}.  We denote that map by $\Lambda_{MB}$ and then  it satisfies
\begin{equation}\label{eq03}
k_{\mu}(z)= \Sigma_{\Lambda_{MB}(\mu)}(z).
\end{equation}
The bijection $\Lambda_{MB}$ is a homeomorphism from $\id (\putimes; \tor)$ to the set of the $\boxtimes$-infinitely divisible distributions.

For $\mu \in \id (\putimes; \tor)_0$, both $\mu^{\boxtimes t}$ for $t > 1$ and $\mu^{\hutimes t}$ for $t >0$ can be defined, but they are not unique. For the Boolean case, this ambiguity is due to the rotational freedom \cite{Fra2}. Because of this ambiguity, Boolean powers do not work well in some situations \cite{Has4}. However, we can overcome this difficulty by introducing a countable family of free and Boolean powers. Let $u_\mu^{(0)}$ be the function satisfying $k_\mu(z)=e^{u_\mu^{(0)}(z)}$ with $-\pi < \im u_\mu^{(0)}(0) < \pi$; see Eq.\ (\ref{levymb}). If one needs to consider $\im u_\mu^{(0)}(0) = -\pi$ or $\pi$, one can approximate $u_\mu^{(0)}$ using a sequence of probability measures $\mu_n$ satisfying $\im u_{\mu_n}^{(0)}(0) \searrow  -\pi$ or $\im u_{\mu_n}^{(0)}(0) \nearrow  \pi$. We can define a family of convolution semigroups $\mu^{\hutimes_n t}$ ($n \in \mathbb{Z}$) from the relation
\[
k_{\mu^{\sutimes_n t}}(z)=e^{tu_\mu^{(n)}(z)},
\]
where $u_\mu^{(n)} = -2\pi ni + u_\mu ^{(0)}$.

Also in the free case, the ambiguity comes from the rotational freedom. We define free powers in terms of subordination functions.
Let $\Phi_t^{(n)}(z):= z e^{(t-1)u_\mu^{(n)}(z)}$ for $\mu \in \id (\putimes; \tor)_0$ and $t >1$. An analytic map $\omega_t^{(n)}: \disc \to \disc$ exists for any $t >1$ satisfying the following properties (see Theorem 3.5 of \cite{Bel3}):
\begin{itemize}
\item[($\Omega 1$)] $\Phi_t^{(n)}(\omega_t^{(n)}(z)) = z$ for $z \in \disc$,
\item[($\Omega 2$)] $|\omega_t^{(n)}(z)| \leq |z|$ in $\disc$, 
\item[($\Omega 3$)] $\omega_t^{(n)}$ is univalent.
\end{itemize}
The condition ($\Omega 1$) implies the uniqueness of $\omega_t^{(n)}$. From ($\Omega2$) and Proposition \ref{prop0}(2), 
a probability measure $\mu^{\boxtimes_n t} \in \mathcal{P}(\tor)$ ($n \in \mathbb{Z}$) exists satisfying 
\[
\eta_{\mu^{\boxtimes_n t}}(z) = \eta_\mu(\omega_t^{(n)}(z)).
\]
Because of this formula, we call $\omega_t^{(n)}$ an \textit{$n$-th subordination function}. 
From the property ($\Omega3$), $\eta_{\mu^{\boxtimes_n t}}$ does not vanish in $\disc \backslash\{0\}$ and  from ($\Omega3$), $\eta_\mu'(0)\neq0$.  
Therefore, $\mu^{\boxtimes_n t}$ also belongs to $\id (\putimes; \tor)_0$. If $\mu$ is the normalized Haar measure $\omega$, then $\omega^{\hutimes_n t}$ and $\omega^{\boxtimes_n t}$ are simply defined by $\omega$ itself. 

We define an analogue of the semigroup $\mathbb{B}_t$, paying attention to the rotational freedom.
\begin{defi}\label{def41}
For a real number $\varphi$, we define $[\varphi]$ to be an integer determined as follows:  
\begin{itemize}
\item[(1)] $[\varphi] =0$ if $\varphi \in I_0:= (-\pi, \pi)$; 
\item[(2)] $[\varphi] = n$ if $\varphi \in I_n:= [n\pi, (n+2)\pi)$ and $n \geq 1$; 
\item[(3)] $[\varphi]= n$ if $\varphi \in I_n:= ((n-2)\pi, n\pi]$ and $n \leq -1$. 
\end{itemize}
In particular, $[-\varphi] = -[\varphi]$. 
\end{defi}
Using this, we define continuous families of Boolean and free powers:
\[
\mu^{\hutimes_\varphi t}:=\mu^{\hutimes_{[\varphi]}t},~~~~\mu^{\boxtimes_\varphi t}:=\mu^{\boxtimes_{[\varphi]}t}
\]
for $\varphi \in \real$.
\begin{defi}\label{def12}
A family of maps $\{\mathbb{M}_t^{(n)} \}_{t\geq 0}$ from $\id (\putimes; \tor)$ into itself is defined by
\begin{equation}
\mathbb{M}_t^{(n)}(\mu) = (\mu^{\boxtimes_{\arg m_1(\mu)} (t+1)})^{\hutimes_{(t+1)\arg m_1(\mu)} \frac{1}{t+1}},
\end{equation}
where $m_1(\mu)=\int_{\tor} \zeta~\mu(d\zeta)$ and $\arg m_1(\mu)$ is taken to satisfy $n = [\arg m_1(\mu)]$.
If $\arg m_1(\mu) =  n\pi$, we can define $\mathbb{M}_t^{(n)}(\mu)$ to be the limit $\lim_{r \searrow 0}\mathbb{M}_t^{(n)}(\mu \boxtimes \delta_{e^{ir\text{sign}(n)}})$, where $\text{sign}(x) = 1, x \geq 0$ and $\text{sign}(x) = -1, x < 0$.
\end{defi}
\begin{rem} The essence of the above definition is to make the function $m_1(\mathbb{M}_t^{(n)}(\mu))$ constant with respect to $t \in \real_+$. 
\end{rem}

$\mathbb{M}_t^{(n)}$ is injective on $\id (\putimes; \tor)$ for $t \geq 0$, but not continuous on $\id (\putimes; \tor)$ wrt weak convergence for $t \notin \mathbb{N}$. This discontinuity has the same origin as the function $m_1(\mu)^t$. We take a branch such that $\mathbb{M}_t^{(n)}$ is continuous in the subset $\{\mu \in \id (\putimes; \tor): m_1(\mu) \in \overline{\disc}\backslash [-1,0] \}$.  $\mathbb{M}_1^{(n)}$ coincides with the Bercovici-Pata bijection $\Lambda_{MB}$ for any $n$, as in the additive case. Therefore, $\mathbb{M}_k^{(n)}$ is also continuous on $\id (\putimes; \tor)$ for any $k \in \nat$ and does not depend on $n$, since $\mathbb{M}_k^{(n)}$ is the iteration of $\Lambda_{MB}$ by $k$ times (see Theorem \ref{thm141}).

The following is a key to the semigroup property of $\mathbb{M}_t^{(n)}$.
\begin{prop}\label{prop131} Let $\mu \in \id (\putimes; \tor)_0$ and $\arg m_1(\mu) \in \real$ be an arbitrary argument of $m_1(\mu)$. \\
(1) Let $p, q$ be two real numbers such that $p \geq 1$ and $1-\frac{1}{p} < q$. Then we have
\begin{equation}
(\mu^{\boxtimes_{\arg m_1(\mu)} \,\, p})^{\hutimes_{p \arg m_1(\mu)} \,q}= (\mu^{\hutimes_{\arg m_1(\mu)}\, q'})^{\boxtimes_{q'\arg m_1(\mu)}\,\, p'},
\end{equation}
where  $p', q'$ are defined by $p' := pq/(1 - p + pq)$, $q' := 1 - p + pq$. \\
(2) $(\mu^{\boxtimes_{\arg m_1(\mu)} \,\,t})^{\boxtimes_{t \arg m_1(\mu)} \,\,s} = \mu^{\boxtimes_{\arg m_1(\mu)}\,\, ts}$ for $t,s \geq 1$. \\
(3) $(\mu^{\hutimes_{\arg m_1(\mu)}\,\, t})^{\hutimes_{t \arg m_1(\mu)} \,\,s} = \mu^{\hutimes_{\arg m_1(\mu)} \,\,ts}$ for $t,s \geq 0$.
\end{prop}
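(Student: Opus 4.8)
The plan is to transport the real-line identities to the circle through the transform $f_\mu(z)=\log\eta_\mu(e^z)$, whose defining relations
\[
f^{-1}_{\mu\boxtimes\nu}(z)=f^{-1}_\mu(z)+f^{-1}_\nu(z)-z,\qquad f_{\mu\hutimes\nu}(z)=f_\mu(z)+f_\nu(z)-z
\]
make $f_\mu$, $f_\mu-z$ and $f^{-1}_\mu-z$ play the roles of the reciprocal Cauchy transform, the energy function and the Voiculescu transform. Writing $f^{(n)}_\mu:=f_\mu+2\pi ni$ for the branch attached to $u^{(n)}_\mu$ (so that $u^{(n)}_\mu(e^w)=w-f^{(n)}_\mu(w)$), I would first record the two affine power rules
\[
f_{\mu^{\sutimes_n t}}(z)=(1-t)z+tf^{(n)}_\mu(z),\qquad f^{-1}_{\mu^{\boxtimes_n t}}(z)=(1-t)z+t\bigl(f^{(n)}_\mu\bigr)^{-1}(z).
\]
The first is immediate from $k_{\mu^{\sutimes_n t}}=e^{tu^{(n)}_\mu}$. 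For the second I would substitute $z=e^w$ into $\Phi^{(n)}_t(z)=ze^{(t-1)u^{(n)}_\mu(z)}$ to get $\log\Phi^{(n)}_t(e^w)=tw-(t-1)f^{(n)}_\mu(w)$; since $\eta_{\mu^{\boxtimes_n t}}=\eta_\mu\circ\omega^{(n)}_t$ with $\omega^{(n)}_t=(\Phi^{(n)}_t)^{-1}$, the subordination function realizes precisely the functional inverse appearing on the right. These two rules are the verbatim additive rules with $F$ replaced by $f$.

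With the dictionary in place, parts (2) and (3) reduce to the statement that, on a common branch, composing the affine rules is scalar multiplication: iterating the Boolean rule with exponents $t$ then $s$ multiplies $f^{(n)}_\mu-z$ by $ts$, and iterating the free rule multiplies $\bigl(f^{(n)}_\mu\bigr)^{-1}-z$ by $ts$; the constraints $t,s\ge 0$ (Boolean) and $t,s\ge 1$ (free) are exactly what guarantees the intermediate powers exist. Part (1) is then the transcription of Proposition \ref{prop5}: running its algebraic manipulation with $F,K,\phi$ replaced by $f,\,f-z,\,f^{-1}-z$, the choice $p'=pq/(1-p+pq)$, $q'=1-p+pq$ forces the two iterated affine compositions to coincide as analytic functions, and injectivity of $\mu\mapsto f_\mu$ on $\id(\putimes;\tor)_0$ upgrades this to equality of measures.

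The genuinely new work, and the main obstacle, is the branch bookkeeping that the subscripts $\arg m_1(\cdot)$ are designed to control. I would compute how the argument of the first moment, tracked through the chosen branches, evolves: from $\log k_\mu(0)=-\log m_1(\mu)$ one reads $\im u^{(0)}_\mu(0)=-\arg m_1(\mu)$, and evaluating at $z=0$ gives the clean Boolean law $k_{\mu^{\sutimes_n t}}(0)=e^{tu^{(n)}_\mu(0)}$, whereas, using $(\omega^{(n)}_t)'(0)=e^{-(t-1)u^{(n)}_\mu(0)}$, a direct calculation gives $\Sigma_{\mu^{\boxtimes_n t}}=\Sigma_\mu^{\,t}\,e^{-2\pi ni(t-1)}$. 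The appearance of the rotation $e^{-2\pi ni(t-1)}$ in the free power, and its absence in the Boolean power, is exactly the phenomenon that the continuous families $\hutimes_\varphi$, $\boxtimes_\varphi$ and the integer bracket $[\,\cdot\,]$ are built to absorb: I must verify that the outer subscripts $p\arg m_1(\mu)$, $t\arg m_1(\mu)$ and $q'\arg m_1(\mu)$ select branches consistent with the intermediate measures, so that the multivalued powers hidden in $e^{tu^{(n)}_\mu}$ are evaluated on matching sheets on the two sides of each identity. This is the same mechanism, anticipated in the remark following Definition \ref{def12}, that keeps $m_1$ under control. A last, minor point is the degenerate case $\arg m_1(\mu)=n\pi$, where the branch is ambiguous; there I would pass to the limit $\lim_{r\searrow0}(\,\cdot\,\boxtimes\delta_{e^{ir\,\text{sign}(n)}})$ used in Definition \ref{def12} and invoke continuity of the powers in $\mu$.
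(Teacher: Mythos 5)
Your dictionary $f_\mu(z)=\log\eta_\mu(e^z)$ and the two affine power rules are the right intuition --- indeed the paper itself offers exactly this analogy as motivation at the start of Section \ref{sec3} --- but your proposal stops precisely where the actual proof has to begin. The heart of the proposition is the branch consistency that you explicitly defer (``I must verify that the outer subscripts \dots select branches consistent with the intermediate measures''): in the multiplicative setting the affine rules hold only modulo $2\pi i\mathbb{Z}$, and whether the nested powers on the two sides of (1), taken with the prescribed subscripts $p\arg m_1(\mu)$ and $q'\arg m_1(\mu)$, are evaluated on matching sheets \emph{is} the statement to be proved, not an afterthought to be checked. The paper resolves it concretely: it sets up the branch calculus $z^t_{\arg z}$ with the product rule (\ref{eq567}); proves the key identity (\ref{eq114}) expressing $\omega_t(z)/z$ as a specified branch power of $\eta_{\mu^{\boxtimes_{\arg m_1(\mu)}t}}(z)/z$ and conversely; computes both sides of (1) as the \emph{same} branch power $\bigl(\,\cdot\,\bigr)^{pq/(p-1)}_{(p-1)\arg m_1(\mu)}$ of the subordination functions $\omega_p$ and $\sigma_{p'}$ respectively; and then reduces everything to the single identity $\omega_p=\sigma_{p'}$, obtained by showing $\sigma_{p'}$ is a right inverse of $\Phi_p(z)=z\bigl(z/\eta_\mu(z)\bigr)^{p-1}_{\arg m_1(\mu)}$ and invoking the uniqueness of right inverses guaranteed by $(\Omega 1)$. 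None of these steps, nor any substitute for them, appears in your proposal; asserting that ``the choice $p'=pq/(1-p+pq)$, $q'=1-p+pq$ forces the two iterated affine compositions to coincide as analytic functions'' is a restatement of the claim, not an argument, and it cannot be outsourced to Proposition \ref{prop5}, whose proof is not even reproduced in this paper.

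There is a second, related flaw in the formalism you do write down: you treat $\omega_t^{(n)}$ as the functional inverse $(\Phi_t^{(n)})^{-1}$, and correspondingly use $f_\mu^{-1}$ and $(f_\mu^{(n)})^{-1}$ as globally defined maps. For $\mu\in\id(\putimes;\tor)_0$ neither $\Phi_t^{(n)}$ nor $\eta_\mu$ need be injective on $\disc$ ($\putimes$-infinite divisibility only forbids zeros of $\eta_\mu(z)/z$); what exists is a univalent \emph{right} inverse $\omega_t^{(n)}$ satisfying $\Phi_t^{(n)}\circ\omega_t^{(n)}=\mathrm{id}$, and every manipulation with ``inverses'' must be routed through the uniqueness of such right inverses --- exactly the mechanism by which the paper concludes $\omega_p=\sigma_{p'}$, and also the way it handles your parts (2) and (3), which it calls ``analogous and simpler.'' Moreover, your claimed free-power rule $f^{-1}_{\mu^{\boxtimes_n t}}(z)=(1-t)z+t\bigl(f^{(n)}_\mu\bigr)^{-1}(z)$ is not even branch-correct as written: tracing the shift $f^{(n)}_\mu=f_\mu+2\pi ni$ through the inversion produces displacements of the argument by $2\pi n i(t-1)$ (the same rotation $e^{-2\pi ni(t-1)}$ you notice in $\Sigma_{\mu^{\boxtimes_n t}}$) that your formula ignores. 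So: correct strategy, correctly located obstacle, but the obstacle itself is left unresolved, and the inverse-function shortcuts would need to be recast in terms of subordination functions before the argument could be completed.
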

\begin{proof}
We use the notation $z^t_{\arg z}$ to distinguish branches. More precisely, $z^t_{\arg z}$ is defined to be $e^{it\arg z  + t\log |z|}$ for any argument of $z\neq 0$. We note that
\begin{equation}\label{eq567}
(zw)^t_{\arg z + \arg w} = z^t_{\arg z} w^t_{\arg w}
\end{equation}
for any $z, w \neq 0$.

We first prove the following fact:  for the $[\arg m_1(\mu)]$-th subordination function $\omega_t$ associated to a probability measure $\mu \in \id (\putimes; \tor)_0$, one gets
\begin{equation}\label{eq114}
\frac{\omega_t(z)}{z} = \Big(\frac{\eta_{\mu^{\boxtimes_{\arg m_1(\mu)} \,\,t}}(z)}{z}\Big)^{1-1/t}_{t\arg m_1(\mu)},~~~~ \frac{\eta_{\mu^{\boxtimes_{\arg m_1(\mu)}\,\, t}}(z)}{z}= \Big(\frac{\omega_t(z)}{z}\Big)^{\frac{t}{t-1}}_{(t-1)\arg m_1(\mu)}.
\end{equation}
This is proved as follows. For simplicity, let $n:=[\arg m_1(\mu)]$. From the relation $\Phi_t^{(n)}\circ \omega_t^{(n)}(z) =z$ and (\ref{eq567}), we have
\[
\begin{split}
1 &= \frac{\omega_t^{(n)}(z)}{z} \Big( \frac{\omega_t^{(n)}(z)}{z} \frac{z}{\eta_{\mu^{\boxtimes_n t}}(z)} \Big)^{t-1}_{-\arg m_1(\mu)} \\
 &= \frac{\omega_t^{(n)}(z)}{z} \Big(\frac{\omega_t^{(n)}(z)}{z}\Big)^{t-1}_{(t-1)\arg m_1(\mu)} \Big(\frac{z}{\eta_{\mu^{\boxtimes_n t}}(z)} \Big)^{t-1}_{-t\arg m_1(\mu)}  \\
  &= \Big(\frac{\omega_t^{(n)}(z)}{z}\Big)^{t}_{(t-1)\arg m_1(\mu)} \Big(\frac{z}{\eta_{\mu^{\boxtimes_n t}}(z)} \Big)^{t-1}_{-t\arg m_1(\mu)}, 
\end{split}
\]
from which the desired relations follow.

For simplicity, let us introduce the notations $\lambda:=(\mu^{\boxtimes_{\arg m_1(\mu)} \,\,p})^{\hutimes_{p\arg m_1(\mu)}\, q}$ and $\nu:= (\mu^{\hutimes_{\arg m_1(\mu)}\,\, q'})^{\boxtimes_{q'\arg m_1(\mu)}\,\, p'}$. If the $n$-th subordination function for $\mu$ is denoted simply by $\omega_t$, one obtains
\[
\frac{\eta_\lambda(z)}{z} = \Big( \frac{\eta_{\mu^{\boxtimes_{\arg m_1(\mu)} \,\,p}(z) }}{z}\Big)^q_{p \arg m_1(\mu)} = \Big( \frac{\omega_p(z)}{z}\Big)^{\frac{pq}{p-1}}_{(p-1)\arg m_1(\mu)},
\]
where (\ref{eq114}) was applied.
On the other hand, if the $[q' \arg m_1(\mu)]$-th subordination function for $\rho:= \mu^{\hutimes_{\arg m_1(\mu)}\,\, q'}$ is denoted by $\sigma_t$, then ($\Omega 1$) implies that $\frac{\eta_\rho (\sigma_{p'}(z))}{z}= \Big(\frac{\sigma_{p'}(z)}{z}\Big)^{\frac{1}{p'-1}}_{q'(p'-1)\arg m_1(\mu)}$. We note that $q'(p'-1) = p-1$, and therefore,
\[
\begin{split}
\frac{\eta_\nu(z)}{z}
&= \frac{\eta_\rho(\sigma_{p'} (z))}{z} = \frac{\eta_{\rho}(\sigma_{p'} (z))}{\sigma_{p'}(z)} \frac{\sigma_{p'}(z)}{z} \\
&= \Big( \frac{\sigma_{p'}(z)}{z}\Big)^{\frac{p'}{p'-1}}_{(p-1)\arg m_1(\mu)} = \Big( \frac{\sigma_{p'}(z)}{z}\Big)^{\frac{pq}{p-1}}_{(p-1)\arg m_1(\mu)}.
\end{split}
\]
The above calculations have reduced the problem to proving $\omega_p = \sigma_{p'}$. Let us prove this. The second identity of (\ref{eq114}), $\mu$ and $\omega_t$ replaced by $\rho = \mu^{\hutimes_{\arg m_1(\mu)}\, q'}$ and $\sigma_t$ respectively, leads to
\[
\frac{\sigma_{p'}(z)}{z} = \Big( \frac{\eta_\mu(\sigma_{p'}(z))}{\sigma_{p'}(z)}\Big)^{p-1}_{\arg m_1(\mu)}.
\]
This relation says that $\sigma_{p'}$ is exactly the right inverse of $\Phi_p(z):= z \big( \frac{z}{\eta_\mu(z)}\big)^{p-1}_{\arg m_1(\mu)}$. Therefore,  $\sigma_{p'} = \omega_p$ from the uniqueness.

(2) and (3) follow easily from analogous and simpler arguments.
\end{proof}

The semigroup property of $\mathbb{M}_t^{(n)}$ is immediate from Proposition \ref{prop131}.
\begin{thm}\label{thm141}
$\mathbb{M}_{t+s}^{(n)} = \mathbb{M}_t^{(n)} \circ \mathbb{M}_s^{(n)} \text{~on~} \id (\putimes; \tor),~~t,s \geq 0,~n\in \mathbb{Z}$.
\end{thm}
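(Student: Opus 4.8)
The plan is to imitate, step for step, the proof of the semigroup property of the additive map $\mathbb{B}_t$, reading Proposition \ref{prop131} as the multiplicative substitute for Proposition \ref{prop5}. Recall how the additive identity $\mathbb{B}_{s+t}=\mathbb{B}_t\circ\mathbb{B}_s$ runs: writing $\mathbb{B}_s(\mu)=(\mu^{\boxplus(1+s)})^{\uplus\frac1{1+s}}$ and raising it to the $\boxplus(1+t)$ power, one commutes the free power past the Boolean one by Proposition \ref{prop5}, then merges the two free powers and the two Boolean powers to reach $(\mu^{\boxplus(1+s+t)})^{\uplus\frac1{1+s+t}}=\mathbb{B}_{s+t}(\mu)$; the whole computation is a short piece of parameter arithmetic. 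For $\mathbb{M}^{(n)}_t$ the same arithmetic should go through verbatim, the only genuinely new issue being to keep the branch subscripts (the value $\arg m_1$ and its integer class $n$) consistent across the composition.

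First I would record how the first moment behaves under the two powers, since this controls every subscript. Expanding at the origin, $\eta_\mu(z)=m_1(\mu)z+O(z^2)$, so $\eta_\mu'(0)=m_1(\mu)$ and $k_\mu(0)=1/m_1(\mu)$. A subordination function fixes the origin, and differentiating $\Phi^{(n)}_t\circ\omega^{(n)}_t=\mathrm{id}$ at $0$ gives $(\omega^{(n)}_t)'(0)=e^{-(t-1)u^{(n)}_\mu(0)}$; together with $\eta_{\mu^{\boxtimes_n t}}(z)=\eta_\mu(\omega^{(n)}_t(z))$ this yields $m_1(\mu^{\boxtimes_n t})=m_1(\mu)\,e^{-(t-1)u^{(n)}_\mu(0)}$. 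Likewise $k_{\mu^{\sutimes_n t}}(0)=e^{tu^{(n)}_\mu(0)}$ gives $m_1(\mu^{\hutimes_n t})=e^{-tu^{(n)}_\mu(0)}$. Since $\im u^{(n)}_\mu(0)=-\arg m_1(\mu)$ for the branch selected by $n=[\arg m_1(\mu)]$, both formulas reduce to the single scaling law $\arg m_1(\mu^{\boxtimes_\theta t})=\arg m_1(\mu^{\hutimes_\theta t})=t\,\theta$, where $\theta:=\arg m_1(\mu)$. In particular $\arg m_1(\mathbb{M}^{(n)}_s(\mu))=\tfrac1{1+s}(1+s)\theta=\theta$, so $\mathbb{M}^{(n)}_s$ preserves both $\arg m_1$ and its class $n$; this is exactly the invariance anticipated in the Remark, and it is what allows the \emph{same} superscript $n$ to be used in both factors of $\mathbb{M}^{(n)}_t\circ\mathbb{M}^{(n)}_s$.

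With this in hand the composition is a direct calculation. Put $\theta=\arg m_1(\mu)$, $\rho=\mu^{\boxtimes_\theta(1+s)}$ (so $\arg m_1(\rho)=(1+s)\theta$) and $\lambda=\mathbb{M}^{(n)}_s(\mu)=\rho^{\hutimes_{(1+s)\theta}\frac1{1+s}}$ with $\arg m_1(\lambda)=\theta$. Then
\[
\mathbb{M}^{(n)}_t(\lambda)=\bigl(\lambda^{\boxtimes_\theta(1+t)}\bigr)^{\hutimes_{(1+t)\theta}\frac1{1+t}},\qquad \lambda^{\boxtimes_\theta(1+t)}=\bigl(\rho^{\hutimes_{(1+s)\theta}\frac1{1+s}}\bigr)^{\boxtimes_\theta(1+t)}.
\]
Applying Proposition \ref{prop131}(1) to the base $\rho$ with $q'=\frac1{1+s}$ and $p'=1+t$ forces $p=\frac{1+s+t}{1+s}$ and $q=\frac{1+t}{1+s+t}$; one checks $p\geq1$ and $1-\frac1p=\frac{t}{1+s+t}<q$, so the hypotheses hold and $\lambda^{\boxtimes_\theta(1+t)}=(\rho^{\boxtimes_{(1+s)\theta}\,p})^{\hutimes_{p(1+s)\theta}\,q}$. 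Merging the two free powers by Proposition \ref{prop131}(2) gives $\rho^{\boxtimes_{(1+s)\theta}p}=\mu^{\boxtimes_\theta(1+s)p}=\mu^{\boxtimes_\theta(1+s+t)}$, whose first-moment argument $(1+s+t)\theta$ matches the remaining Boolean subscript $p(1+s)\theta$. Substituting back and merging the two Boolean powers by Proposition \ref{prop131}(3) collapses the exponent to $q\cdot\frac1{1+t}=\frac1{1+s+t}$, leaving $\bigl(\mu^{\boxtimes_\theta(1+s+t)}\bigr)^{\hutimes_{(1+s+t)\theta}\frac1{1+s+t}}=\mathbb{M}^{(n)}_{s+t}(\mu)$, as required.

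It remains to dispose of the degenerate cases. For the Haar measure $\omega$ all convolution powers are unique and equal to $\omega$, so $\mathbb{M}^{(n)}_t(\omega)=\omega$ and the identity is trivial. When $\arg m_1(\mu)=n\pi$ the powers are defined through the limit in Definition \ref{def12}; here I would pass to the limit $r\searrow0$ in $\mu\boxtimes\delta_{e^{ir\,\mathrm{sign}(n)}}$, for which $\arg m_1\neq n\pi$, and use continuity of the powers in this regime to transport the identity just established. I expect the parameter algebra of the third paragraph to be entirely routine; the real content, and the only place where the multiplicative setting genuinely differs from the additive one, is the branch bookkeeping of the second paragraph—verifying that $\arg m_1$ scales linearly under both powers and is fixed by $\mathbb{M}^{(n)}_s$, so that a single branch index $n$ threads consistently through the whole composition.
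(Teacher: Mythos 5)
Your proof is correct and follows essentially the same route as the paper: compute the composition by applying Proposition \ref{prop131}(1) with the appropriate parameters (your $p=\tfrac{1+s+t}{1+s}$, $q=\tfrac{1+t}{1+s+t}$ is the paper's choice with the roles of $s$ and $t$ interchanged), then merge the free powers via (2) and the Boolean powers via (3). Your second paragraph, verifying that $\arg m_1$ scales linearly under both powers and is fixed by $\mathbb{M}^{(n)}_s$, makes explicit the branch bookkeeping that the paper leaves implicit in Definition \ref{def12} and the remark following it, and is consistent with it.
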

\begin{proof}
Let us assume that $n = [\arg m_1(\mu)]$.
Let us take $p = \frac{t+s+1}{t+1}$ and $q = \frac{s+1}{t+s+1}$ in Proposition \ref{prop131} (1) and replace $\mu$ by $\mu^{\boxtimes_{\arg m_1(\mu)}(t+1)}$. Then  Proposition \ref{prop131} (1) and (2) say that
\begin{equation}\label{eq56}
\begin{split}
&\left( (\mu^{\boxtimes_{\arg m_1(\mu)}(t+1)})^{\hutimes_{(t+1)\arg m_1(\mu)} \frac{1}{t+1}}\right)^{\boxtimes_{\arg m_1(\mu)} (s+1)} \\
&~~~~~~~~~~~=  \left((\mu^{\boxtimes_{\arg m_1(\mu)}(t+1)})^{\boxtimes_{(t+1)\arg m_1(\mu)} \frac{t+s+1}{t+1}}\right)^{\hutimes_{(t+s+1)\arg m_1(\mu)} \frac{s+1}{t+s+1}} \\
&~~~~~~~~~~~ =  (\mu^{\boxtimes_{\arg m_1(\mu)}(t+s+1)})^{\hutimes_{(t+s+1)\arg m_1(\mu)} \frac{s+1}{t+s+1}}.
\end{split}
\end{equation}
Therefore $\mathbb{M}_s^{(n)} \circ \mathbb{M}_t^{(n)}$ can be calculated as
\[
\begin{split}
\mathbb{M}_s^{(n)} \circ \mathbb{M}_t^{(n)}(\mu)
&= \left( \left((\mu^{\boxtimes_{\arg m_1(\mu)} (t+1)})^{\hutimes_{(t+1)\arg m_1(\mu)} \frac{1}{t+1}}\right)^{\boxtimes_{\arg m_1(\mu)} (s+1)}\right) ^{\hutimes_{(s+1)\arg m_1(\mu)} \frac{1}{s+1}} \\
&=  (\mu^{\boxtimes_{\arg m_1(\mu)} (t+s+1)})^{\hutimes_{(t+s+1)\arg m_1(\mu)} \frac{1}{t+s+1}} \\
&= \mathbb{M}_{t+s}^{(n)}(\mu),
\end{split}
\]
where Proposition \ref{prop131} (3) and (\ref{eq56}) were applied in the second line.
\end{proof}

An analogue of the free additive divisibility indicator can be defined for the multiplicative case as follows.
\begin{defi}
We define a multiplicative free divisibility indicator $\theta^{(n)}(\mu)$ to be
\[
\theta^{(n)}(\mu)= \sup \{t \geq 0: \mu \in \mathbb{M}_t^{(n)}(\id(\putimes;\tor)) \}
\]
for $\mu \in \id(\putimes;\tor)$ and write $\theta(\mu):=\theta^{(0)}(\mu).$
\end{defi}
$\mathbb{M}_t^{(n)}(\mu)$ is well defined for $t \geq - \theta^{(n)}(\mu)$ for the same reason as the additive case. 
The quantity $\theta^{(n)}$ in fact is independent of $n$ as we state in the following proposition for further reference. 
\begin{prop}\label{di}
For all $n\in\mathbb{Z}$ and $\mu \in \id (\putimes; \tor)$, we have $\theta^{(n)}(\mu)=\theta(\mu)$.
\end{prop}
The proof  is passed to Appendix for fluent reading, because it contains complicated notation. 

The free convolution power $\mu^{\boxtimes_n \, t}$ was introduced for $\mu \in \id(\putimes; \tor)$ and for $t \geq 1$ in terms of subordination functions. As it is called a free power, it is also characterized by 
\begin{equation}\label{sigma}
\Sigma_{\mu^{\boxtimes_n  t}}(z) = \Sigma_\mu(z)^t. 
\end{equation}
The branch of $\Sigma_\mu(z)^t$ depends on $n$. Using the expansion $\Sigma_\mu(z)= \frac{1}{m_1(\mu)}(1+O(z))$, we define $\Sigma_\mu(z)^t$ to be $e^{-it \arg m_1(\mu) - t \log|m_1(\mu)|}(1+O(z))^t$, where $\arg m_1(\mu) \in I_n$; see Definition \ref{def41}. We do not indicate a specific $n$ in $\Sigma_\mu(z)^t$, but that will be always clear from the context. Even for $t <1$, if a measure $\mu^{\boxtimes_n  t} \in \id(\putimes; \tor)$ exists  satisfying Eq.\ (\ref{sigma}), let us call it a $t$-th free power.

Now we prove counterparts of Theorems \ref{thm001} and \ref{thm01}.
\begin{thm}\label{thm87} We consider a probability measure $\mu \in \id (\putimes; \tor)$. \\
(1) $\mu^{\boxtimes_n\, t}$ exists for any $n \in \mathbb{Z}$ and $t \geq \max\{1-\theta(\mu),0 \}$. \\
(2) $\mu$ is $\boxtimes$-infinitely divisible if and only if $\theta(\mu) \geq 1$. \\
(3) $\theta(\mathbb{M}_t^{(n)}(\mu)) = \theta(\mu)+t$ for any $n \in \mathbb{Z}$ and $t \geq -\theta(\mu)$. \\
(4) $\theta(\mu^{\hutimes_n \,t}) = \frac{1}{t}\theta(\mu)$ for any $n \in \mathbb{Z}$ and $t > 0$. \\
(5) $\theta(\mu^{\boxtimes_n\, t})-1 = \frac{1}{t}(\theta(\mu)-1)$ for any $n \in \mathbb{Z}$ and $t > \max \{1- \theta(\mu), 0 \}$.
\end{thm}
\begin{proof} All the proofs are similar to the additive case with slight modification. The reader is referred to Section 5 of \cite{Bel2} and Theorem \ref{thm01} of this paper. For instance, (4) can be proved as follows. Let us take $\arg m_1(\mu)$ such that $n = [\arg m_1(\mu)]$ and suppose $\theta^{(n)}(\mu)=t$. By definition, there exists $\nu$ such that $\mathbb{M}_t^{(n)}(\nu)=\mu$. We note that $\arg m_1(\mu) = \arg m_1(\nu)$. Then
\[
\begin{split}
\mu^{\hutimes_{\arg m_1(\mu)} s}  &= \Big((\nu^{\boxtimes_{\arg m_1(\mu)} (1+t)})^{\hutimes_{(t+1)\arg m_1(\mu)} \frac{s+t}{1+t}}\Big) ^{\hutimes_{(t+s)\arg m_1(\mu)} \frac{s}{s+t}} \\
&= \Big((\nu^{\hutimes_{\arg m_1(\mu)} s})^{\boxtimes_{s \arg m_1(\mu)} \frac{s+t}{s}}\Big) ^{ \hutimes_{(t+s)\arg m_1(\mu)} \frac{s}{s+t}} \\
&= \Big((\nu^{\hutimes_{\arg m_1(\mu)} s})^{\boxtimes_{s \arg m_1(\mu)} (1 +t/s)}\Big) ^{\hutimes_{(t+s)\arg m_1(\mu)} \frac{1}{1+t/s}} \\
&= \mathbb{M}_{t/s}^{([s\arg m_1(\mu)])}(\nu^{\hutimes_{\arg m_1(\mu)} s}),
\end{split}
\]
where we used Proposition \ref{prop131} (1) with $p =1+t$, $q = \frac{s+t}{1+t}$. Therefore, $\theta^{([s \arg m_1(\mu)])}(\mu^{\hutimes_n s}) \geq t/s = \frac{\theta^{(n)}(\mu)}{s}$. As in Theorem \ref{thm001}, let us replace $s$ by $1/s$ and $\mu$ by $\mu^{\hutimes_{\arg m_1(\mu)} s}$. Using Proposition \ref{prop131}(2), we obtain the converse inequality.
\end{proof}
We can immediately characterize the free divisibility indicator with Boolean multiplicative powers on the unit circle.
\begin{cor}
$\theta(\mu) = \sup \{t \geq 0: \mu^{\hutimes_n t} \text{~is~} \text{$\boxtimes$-infinitely divisible}\}$ for $\mu \in \id (\putimes; \tor)$.
\end{cor}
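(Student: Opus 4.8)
The plan is to mirror exactly the way Corollary \ref{cor90} was deduced from Theorem \ref{thm01} in the additive setting, now using its multiplicative counterpart Theorem \ref{thm87}. The statement is trivial for the Haar measure $\omega$ (every power is again $\omega$, which is $\boxtimes$-infinitely divisible, and both sides are $\infty$), so I would dispose of that case first and assume $\mu \in \id(\putimes;\tor)_0$. Then I would fix $n$ and choose an argument $\arg m_1(\mu) \in \real$ with $[\arg m_1(\mu)] = n$, so that by the definition $\mu^{\hutimes_\varphi t} := \mu^{\hutimes_{[\varphi]}t}$ we have $\mu^{\hutimes_n t} = \mu^{\hutimes_{\arg m_1(\mu)}\,t}$ for every $t > 0$. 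This identification is the only bookkeeping needed to bring Theorem \ref{thm87}(4) to bear.

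The core of the argument is a single equivalence chain. For a fixed $t > 0$, Theorem \ref{thm87}(4) gives
\[
\theta^{([t\arg m_1(\mu)])}\big(\mu^{\hutimes_n t}\big) = \frac{1}{t}\,\theta^{(n)}(\mu).
\]
Next I would invoke Theorem \ref{thm87}(2) together with the remark immediately following it: $\boxtimes$-infinite divisibility of a measure is equivalent to its indicator $\theta^{(m)}$ being at least $1$ for \emph{any} fixed integer $m$. Applying this with the fixed integer $m = [t\arg m_1(\mu)]$ to the measure $\mu^{\hutimes_n t}$, I conclude that $\mu^{\hutimes_n t}$ is $\boxtimes$-infinitely divisible if and only if $\theta^{([t\arg m_1(\mu)])}(\mu^{\hutimes_n t}) \geq 1$, that is, if and only if $\tfrac{1}{t}\theta^{(n)}(\mu) \geq 1$, i.e. $t \leq \theta^{(n)}(\mu)$. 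Hence the set $\{t \geq 0 : \mu^{\hutimes_n t} \text{ is } \boxtimes\text{-infinitely divisible}\}$ is precisely $[0,\theta^{(n)}(\mu)]$ (interpreted as $[0,\infty)$ when $\theta^{(n)}(\mu) = \infty$), whose supremum is $\theta^{(n)}(\mu)$, as claimed.

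The step I expect to require the most care is the matching of superscripts. Unlike the additive case, the index of the indicator on the left-hand side of Theorem \ref{thm87}(4) is $[t\arg m_1(\mu)]$, which \emph{drifts with} $t$, so one cannot simply read off $\boxtimes$-infinite divisibility from the normalized version $\theta^{(0)} \geq 1$ of Theorem \ref{thm87}(2); it is exactly the remark after that theorem --- that the criterion is independent of which fixed branch $m$ one uses --- that makes the chain above legitimate. A secondary subtlety is the boundary values where $t\arg m_1(\mu) \in \pi\mathbb{Z}$, at which the power and the associated $\mathbb{M}^{(\cdot)}_\bullet$ are defined only through the limiting prescription of Definition \ref{def12}; here I would note that the equivalence $t \leq \theta^{(n)}(\mu)$ survives by approximating $\mu$ by measures with $\arg m_1$ off the critical set, exactly as in the additive treatment, so these exceptional $t$ do not affect the supremum.
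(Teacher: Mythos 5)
Your proposal is correct and follows exactly the route the paper intends: the paper gives no separate proof, presenting the corollary as an immediate consequence of Theorem \ref{thm87}, and your chain --- part (4) giving $\theta^{([t\arg m_1(\mu)])}\big(\mu^{\hutimes_n t}\big) = \tfrac{1}{t}\theta^{(n)}(\mu)$, then part (2) together with the remark following it, which makes the criterion $\theta^{(m)}\geq 1$ independent of the fixed branch index $m$ --- is precisely that deduction, mirroring how Corollary \ref{cor90} follows from Theorems \ref{thm001} and \ref{thm01} in the additive case. Your explicit attention to the superscript drifting with $t$ and to the boundary set $t\arg m_1(\mu)\in\pi\mathbb{Z}$ simply spells out bookkeeping the paper leaves tacit.
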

An analogue of Bo\.zejko's conjecture for multiplicative convolutions is also the case on the unit circle.
\begin{prop} If $\mu \in \id (\putimes; \tor)$ is $\boxtimes$-infinitely divisible, then so is $\mu^{\hutimes_n t}$ for $0 \leq t \leq 1$ and $n \in \mathbb{Z}$. Moreover, $\mu^{\hutimes_{n} t} = \Lambda_{MB}((\mu^{\boxtimes_{n} (1-t)})^{\hutimes_{(1-t)\arg m_1(\mu)} t/(1-t)})$ for $0 < t < 1$, where $n=[\arg m_1(\mu)]$.
\end{prop}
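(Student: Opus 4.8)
The plan is to transcribe the proof of Proposition~\ref{Boz} into the multiplicative setting on $\tor$, with the single extra bookkeeping task of tracking the argument of the first moment through every power. Throughout write $\alpha := \arg m_1(\mu)$ with $n = [\alpha]$, and record the propagation rule that under $\boxtimes_\varphi s$ and $\hutimes_\varphi s$ the argument of the first moment gets multiplied by $s$; this is exactly what the subscripts in Proposition~\ref{prop131} and Definition~\ref{def12} are designed to encode, and it guarantees that the branch appearing at each stage below is the correct one.

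For the infinite divisibility assertion I would argue as follows. Since $\mu$ is $\boxtimes$-infinitely divisible, Theorem~\ref{thm87}(2) (in the branch-independent form noted in the remark) gives $\theta^{(n)}(\mu)\geq 1$. By Theorem~\ref{thm87}(4), $\theta^{([t\alpha])}(\mu^{\hutimes_\alpha t}) = \tfrac1t\,\theta^{(n)}(\mu)\geq \tfrac1t\geq 1$ for $0<t\leq 1$, and Theorem~\ref{thm87}(2) then yields that $\mu^{\hutimes_n t}$ is $\boxtimes$-infinitely divisible. The endpoints $t=0$ and $t=1$ are immediate.

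For the explicit formula I would use $\mathbb{M}_1^{(n)} = \Lambda_{MB}$ and expand its definition. Setting $\lambda := (\mu^{\boxtimes_n (1-t)})^{\hutimes_{(1-t)\alpha}\, t/(1-t)}$, whose first moment has argument $t\alpha$ by the propagation rule, I would write
\[
\Lambda_{MB}(\lambda)
= \Big(\big((\mu^{\boxtimes_\alpha (1-t)})^{\hutimes_{(1-t)\alpha}\, t/(1-t)}\big)^{\boxtimes_{t\alpha}\, 2}\Big)^{\hutimes_{2t\alpha}\, 1/2},
\]
then apply Proposition~\ref{prop131}(1) with $p = 1-t$, $q = t/(1-t)$ (so that $p' = 1/2$, $q' = 2t$) to replace the innermost factor by $(\mu^{\hutimes_\alpha 2t})^{\boxtimes_{2t\alpha}\, 1/2}$. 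The free semigroup relation Proposition~\ref{prop131}(2) then collapses $\big((\mu^{\hutimes_\alpha 2t})^{\boxtimes_{2t\alpha}\, 1/2}\big)^{\boxtimes_{t\alpha}\, 2} = \mu^{\hutimes_\alpha 2t}$, and the Boolean semigroup relation Proposition~\ref{prop131}(3) finishes with $(\mu^{\hutimes_\alpha 2t})^{\hutimes_{2t\alpha}\, 1/2} = \mu^{\hutimes_\alpha t} = \mu^{\hutimes_n t}$, which is the claimed identity.

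The main obstacle, exactly as in the additive proof, is that the exponent $p = 1-t$ is less than $1$ and the collapsing step uses a free exponent $1/2<1$, both lying outside the literal hypotheses of Proposition~\ref{prop131}. I would justify them using the very divisibility already established: $\theta^{([2t\alpha])}(\mu^{\hutimes_\alpha 2t}) = \tfrac{1}{2t}\theta^{(n)}(\mu) \geq \tfrac{1}{2t} > \tfrac12$ for $t\in(0,1)$, so that by Theorem~\ref{thm87}(1) the power $(\mu^{\hutimes_\alpha 2t})^{\boxtimes_{2t\alpha}\, 1/2}$ exists, and the identities of Proposition~\ref{prop131} persist by analytic continuation of the defining relations for the $\eta$- and $k$-transforms (equivalently, they hold for all orders once the relevant powers of a $\boxtimes$-infinitely divisible measure are defined). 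The one genuinely new subtlety relative to the real-line case is checking at each step that the subscript written is the true $\arg m_1$ of the intermediate measure; this is precisely the propagation rule recorded in the first paragraph, and it is what makes the branches in Proposition~\ref{prop131} line up.
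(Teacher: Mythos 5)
Your proof is correct and takes essentially the same route as the paper, whose proof of this proposition is literally ``similar to Proposition~\ref{Boz}'': free infinite divisibility via Theorem~\ref{thm87}(2),(4), and the identity via $\Lambda_{MB}=\mathbb{M}_1^{(n)}$ combined with Proposition~\ref{prop131}(1) for $p=1-t$, $q=t/(1-t)$ (giving $p'=1/2$, $q'=2t$) and the semigroup relations, with the branch subscripts propagated exactly as you record. The parameter-range issue you flag ($p=1-t<1$, free exponent $1/2<1$) is equally present, and silently passed over, in the paper's own additive proof of Proposition~\ref{Boz}; your resolution is sound, and can be made fully precise by instead applying Proposition~\ref{prop131}(1) to $\rho:=\mu^{\boxtimes_{\alpha}(1-t)}$ (which exists since $\mu$ is $\boxtimes$-infinitely divisible) with $p=1/(1-t)\geq 1$, $q=2t$, which yields $\lambda^{\boxtimes_{t\alpha}2}=\mu^{\hutimes_{\alpha}2t}$ without ever invoking the commutation relation outside its stated hypotheses.
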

\begin{proof}
The proof is similar to Proposition \ref{Boz}.
\end{proof}

\begin{exa} In the literature, known examples are not many whose free power and Boolean power can be explicitly computed. Finding more examples may be a fruitful question in future. Here is shown one simple example.   
For $a \geq 0$ and $b \in \real$, let $\mu$ be a probability measure on $\tor$ defined by
\[
\mu(d\theta) = \frac{1}{2\pi}\frac{1 - e^{-2a}}{1 + e^{-2a}-2e^{-a}\cos(\theta - b)}d\theta, ~~0 \leq \theta < 2\pi.
\]
This is an analogue of the Cauchy distribution on $\real$ since $\mu$ is identical to the density of the Poisson kernel. Let $c:=e^{-a+ib}$, then $\eta_\mu(z) = cz$ and
\[
\Sigma_\mu(z) = k_{\mu}(z) = c^{-1}= e^{-ib}\exp\left(a\int_{\tor}\frac{1+\zeta z}{1-\zeta z}\omega(d\zeta)\right),
\]
where $\omega(d\theta)$ is the normalized Haar measure.
Therefore, $\mathbb{M}^{(n)}_t(\mu) = \mu$ for any $n \in \mathbb{Z}$ and any $t \geq 0$.  The free divisibility indicator $\theta(\mu)$ is equal to $\infty$.
\end{exa}

\subsection{A composition semigroup on the positive real line}
 From Proposition \ref{prop0}(i), a logarithm $\log k_\mu(z) = \log (z/\eta_\mu(z))$ can be defined in $\comp \backslash \real_+$ with values in $\comp$ for $\mu \neq \delta_0 \in \mathcal{P}(\real_+)$. The function $\log k_\mu(z)$ maps $\comp^+$ to $\comp^- \cup \real$, and therefore, it has the Pick-Nevanlinna representation
\begin{equation}\label{eq01}
\log k_\mu(z) = -a_\mu z + b_\mu + \int_{0}^\infty \frac{1+xz}{z-x}\tau_\mu(dx)
\end{equation}
for $a_\mu \geq 0$, $b_\mu \in \real$ and a non-negative finite measure $\tau_\mu$ on $\real_+$. This is, in a sense, a L\'{e}vy-Khintchine formula for the multiplicative Boolean convolution on $\real_+$. To understand a Bercovici-Pata bijection, we have to know when a function $K(z) = -a z + b + \int_{0}^\infty \frac{1+xz}{z-x}\tau(dx)$ can be written as $\log k_\mu(z)$ for a probability measure $\mu$ on $\real_+$. For instance, Proposition \ref{prop0} implies that $\im(\log k_\mu(z)) \in (- \pi + \arg z, 0]$ in $\comp^+$. In particular, $-\pi \leq \im (\log k_\mu) \leq 0$. Therefore, $a =0$. Moreover, $\tau$ cannot contain the singular part: if the singular part were non-zero, a point $x_0 \geq 0$ would exist such that $\im K(x_0+i0) = \infty$. These conditions, however, are far from a complete characterization.

In spite of the above, we can still construct an injective mapping $\Lambda_{MB}$ from $\mathcal{P}(\real_+)$ to the set of $\boxtimes$-infinitely divisible distributions with the relation
\begin{equation}\label{eq04}
k_{\mu}(z)= \Sigma_{\Lambda_{MB}(\mu)}(z).
\end{equation}
Let us call this map $\Lambda_{MB}$ a Bercovici-Pata map from $\utimes$ to $\boxtimes$ for probability measures on the positive real line.
As explained above, $a_{\Lambda_{MB}(\mu)} =0$ and $\tau_{\Lambda_{MB}(\mu)}$ is absolutely continuous with respect to the Lebesgue measure, where $a_\nu$ and $\tau_\nu$ have been defined in  (\ref{eq76}). Therefore, $\Lambda_{MB}$ is not surjective.

Now we define an analogue of the semigroup $\mathbb{B}_t$ for the multiplicative convolutions.
\begin{defi}\label{def122}
A family of maps $\{\mathbb{M}_t \}_{t\geq 0}$ from $\mathcal{P}(\real_+)$ into itself is defined by
\begin{equation}
\mathbb{M}_t(\mu) = (\mu^{\boxtimes (t+1)})^{\hutimes \frac{1}{t+1}}.
\end{equation}
For $\delta_0$, $\mathbb{M}_t(\delta_0)$ is defined to be just $\delta_0$. 
\end{defi}
As proved in \cite{Ber2}, $\mu^{\hutimes t} \in \mathcal{P}(\real_+)$ is defined for any probability measure $\mu \in \mathcal{P}(\real_+)$ and $0 \leq t \leq 1$. Therefore, $\mathbb{M}_t$ is well defined. The map $\mathbb{M}_t$ on the positive real line is simpler than on the unit circle, since a Boolean power is unique if exists.

The following result is essentially the same as the additive case,   except for the restriction $q \leq 1$.
\begin{prop}\label{prop13}
Let $p, q$ be two real numbers such that $p \geq 1$ and $1-\frac{1}{p} < q \leq 1$. We have
\begin{equation}
(\mu^{\boxtimes p})^{\hutimes q}= (\mu^{\hutimes q'})^{\boxtimes p'},
\end{equation}
where $\mu \in \mathcal{P}(\real_+)$ and $p', q'$ are defined by $p' := pq/(1 - p + pq)$, $q' := 1 - p + pq$.
All the convolution powers are well defined under the above assumptions.
\end{prop}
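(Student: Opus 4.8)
The plan is to transport the entire statement into the coordinate $f_\mu(z)=\log(\eta_\mu(e^z))$ introduced at the start of this section. In this coordinate the two multiplicative convolutions become formally additive: the ``multiplicative Voiculescu transform'' $f^{-1}_\mu(z)-z$ is additive under $\boxtimes$, and the ``multiplicative energy function'' $f_\mu(z)-z$ is additive under $\hutimes$. Reading off the corresponding power laws,
\[
f^{-1}_{\mu^{\boxtimes t}}(z)=t\,f^{-1}_\mu(z)-(t-1)z,\qquad f_{\mu^{\hutimes t}}(z)=t\,f_\mu(z)-(t-1)z,
\]
one sees these are term-by-term identical to the relations used in the additive Proposition \ref{prop5} with $F_\mu$ replaced by $f_\mu$. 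Consequently the identity $(\mu^{\boxtimes p})^{\hutimes q}=(\mu^{\hutimes q'})^{\boxtimes p'}$ becomes exactly the same functional equation as in the additive case, and I would prove it by the identical computation.

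The algebraic core I would carry out at the level of inverse transforms. Writing $A=\mu^{\boxtimes p}$ and $B=\mu^{\hutimes q'}$, I would combine $f_{A^{\hutimes q}}(z)=q f_A(z)+(1-q)z$ with $f^{-1}_A(z)=p f^{-1}_\mu(z)-(p-1)z$, and $f_{B}(z)=q' f_\mu(z)+(1-q')z$ with $f^{-1}_{B^{\boxtimes p'}}(z)=p' f^{-1}_B(z)-(p'-1)z$. Parametrizing both sides through the single function $f^{-1}_\mu$, one checks that the two inverse transforms $f^{-1}_{(\mu^{\boxtimes p})^{\hutimes q}}$ and $f^{-1}_{(\mu^{\hutimes q'})^{\boxtimes p'}}$, evaluated along the common curve $z(s)=(1-q)p\,f^{-1}_\mu(s)+[\,q-(1-q)(p-1)\,]s$, both take the value $p\,f^{-1}_\mu(s)-(p-1)s$, precisely when $q'=1-p+pq$ and $p'=pq/q'$. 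These are the asserted values, so the two inverse transforms agree as analytic functions and hence the two measures coincide; in practice I would simply invoke Proposition \ref{prop5} through the dictionary $F_\mu\leftrightarrow f_\mu$ rather than rewrite the computation.

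The genuinely new point, and the step requiring the most care, is the well-definedness clause, which is exactly the reason for the extra hypothesis $q\le1$. On $\real_+$, in contrast to the real line, a Boolean power $\mu^{\hutimes t}$ is guaranteed to exist only for $0\le t\le1$ \cite{Ber2}, whereas free powers $\mu^{\boxtimes t}$ need $t\ge1$. I would therefore check, from $p\ge1$ and $1-\tfrac1p<q\le1$ alone, that every power appearing lands in its admissible range: the inner free power has exponent $p\ge1$; the outer Boolean power has exponent $q\in(0,1]$, where $q\le1$ is used and $q>0$ follows because $1-\tfrac1p\ge0$; the inner Boolean power has exponent $q'=1-p+pq$, which satisfies $q'>0$ by $1-\tfrac1p<q$ and $q'\le1$ by $q\le1$; and the outer free power has exponent $p'=pq/q'\ge1$, an inequality equivalent to $p\ge1$. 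I would also confirm, as noted after Definition \ref{def122}, that $\mu^{\hutimes t}$ remains supported on $\real_+$, so that $k$ and $f$ are again of the correct type.

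A final technical layer is the analytic bookkeeping. I would fix the single-valued branch of $f_\mu$ arising from the Pick--Nevanlinna representation of $\log k_\mu$ on $\comp\setminus\real_+$ supplied by Proposition \ref{prop0}(i), verify that all the composite inverse functions above are analytic and univalent on a common domain approaching the relevant boundary point, and then upgrade the pointwise identity of transforms to equality of probability measures via the uniqueness in Proposition \ref{prop0}(i). None of this is deep, but since it is precisely where the positive half-line departs from the additive line, I expect the management of domains and of admissible exponents, rather than the algebra, to be the true obstacle.
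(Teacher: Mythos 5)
Your proposal is correct: the exponent bookkeeping is exactly what the well-definedness clause requires ($q>1-\tfrac{1}{p}\geq 0$ and $q\leq 1$ give $q\in(0,1]$; $q'=1-p(1-q)\in(0,1]$; $p'\geq 1$ is equivalent to $p\geq 1$), so every power exists on $\real_+$ by \cite{Ber2}, and your curve computation checks out via the identities $1-q'=p(1-q)$, $(p'-1)q'=p-1$ and $p'-(p'-1)(1-q')=p$, which make both inverse transforms equal $p\,f^{-1}_\mu(s)-(p-1)s$ along $z(s)$. However, your route is genuinely different from the paper's. The paper gives no separate argument here: it states that the proof is that of Proposition \ref{prop131} (the circle case) minus the branch bookkeeping, i.e.\ a subordination argument --- with $\lambda=(\mu^{\boxtimes p})^{\hutimes q}$ and $\nu=(\mu^{\hutimes q'})^{\boxtimes p'}$, one shows that $\eta_\lambda(z)/z$ and $\eta_\nu(z)/z$ are both the $\frac{pq}{p-1}$-th power of a subordination function ($\omega_p$ for $\mu^{\boxtimes p}$ over $\mu$, resp.\ $\sigma_{p'}$ for $\nu$ over $\mu^{\hutimes q'}$), and then $\omega_p=\sigma_{p'}$ because both are right inverses of $\Phi_p(z)=z\left(z/\eta_\mu(z)\right)^{p-1}$ and such an inverse is unique. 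You instead upgrade the $f_\mu$-dictionary, which the paper offers only as intuition, into a proof: you rerun the additive computation with inverse transforms and close with the identity theorem. One caveat on your phrasing: you cannot literally ``invoke Proposition \ref{prop5} through the dictionary,'' because $f_\mu$ is not the reciprocal Cauchy transform of any probability measure on $\real$; the dictionary transports the computation, not the statement. Since you write the computation out, this is harmless. As for what each approach buys: the subordination route never needs to invert the transforms of the composite measures (the subordination functions are defined on all of $\comp\setminus\real_+$) and is uniform with the circle case, whereas your route is more elementary precisely because Proposition \ref{prop0}(i) furnishes a single global branch of $\log k_\mu$ on $\comp\setminus\real_+$ --- the exact sense in which the paper calls the half-line case ``easier'' --- at the cost of the domain and univalence bookkeeping you flag as the true obstacle; restricting $s$ to a real interval in $(-\infty,0)$, where all transforms involved are real-valued and strictly increasing and your curve $z(s)$ is strictly monotone (hence non-discrete), is the standard way to discharge it.
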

The proof is easier than that of Proposition \ref{prop131}; we do not have to pay attention to branches of analytic mappings.
The semigroup property holds also in this case.
\begin{thm}\label{thm142}
$\mathbb{M}_{t+s} = \mathbb{M}_t \circ \mathbb{M}_s \text{~on~} \mathcal{P}(\real_+),~~t,s \geq 0$.
\end{thm}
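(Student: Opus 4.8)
The plan is to mirror the proof of Theorem \ref{thm141} for the unit circle, which reduces the semigroup identity to three ingredients: the commutation relation of Proposition \ref{prop13}, the semigroup law for pure multiplicative free powers $(\mu^{\boxtimes p})^{\boxtimes r} = \mu^{\boxtimes pr}$ for $p,r \geq 1$, and the semigroup law for pure multiplicative Boolean powers $(\mu^{\hutimes a})^{\hutimes b} = \mu^{\hutimes ab}$ for $a,b \in [0,1]$. The latter two follow at once from the transform characterizations $\log\Sigma_{\mu^{\boxtimes p}} = p\log\Sigma_\mu$ and $\log k_{\mu^{\hutimes a}} = a\log k_\mu$; unlike the unit-circle case there are no branches to track, which is exactly why the argument is cleaner here. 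The one genuinely new point, forced by the constraint recalled in problem (B), is that every intermediate Boolean power must have exponent in $[0,1]$ so that it remains inside $\mathcal{P}(\real_+)$.

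First I would expand the composition directly from Definition \ref{def122}:
\[
\mathbb{M}_s \circ \mathbb{M}_t(\mu) = \left(\left((\mu^{\boxtimes(t+1)})^{\hutimes \frac{1}{t+1}}\right)^{\boxtimes(s+1)}\right)^{\hutimes \frac{1}{s+1}}.
\]
The decisive move is to turn the inner free-of-Boolean power into a Boolean-of-free power. I would apply Proposition \ref{prop13} to $\nu := \mu^{\boxtimes(t+1)}$ with $p := \frac{t+s+1}{t+1}$ and $q := \frac{s+1}{t+s+1}$; a short check gives $q' = 1-p+pq = \frac{1}{t+1}$ and $p' = pq/(1-p+pq) = s+1$, so Proposition \ref{prop13} reads
\[
\left(\nu^{\hutimes \frac{1}{t+1}}\right)^{\boxtimes(s+1)} = \left(\nu^{\boxtimes \frac{t+s+1}{t+1}}\right)^{\hutimes \frac{s+1}{t+s+1}}.
\]
Collapsing the two successive free powers by the free semigroup law sends $\nu^{\boxtimes \frac{t+s+1}{t+1}} = (\mu^{\boxtimes(t+1)})^{\boxtimes \frac{t+s+1}{t+1}}$ to $\mu^{\boxtimes(t+s+1)}$. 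Substituting back and then collapsing the two Boolean powers $\frac{s+1}{t+s+1}$ and $\frac{1}{s+1}$, whose product is $\frac{1}{t+s+1}$, gives
\[
\mathbb{M}_s \circ \mathbb{M}_t(\mu) = (\mu^{\boxtimes(t+s+1)})^{\hutimes \frac{1}{t+s+1}} = \mathbb{M}_{t+s}(\mu),
\]
which is the assertion.

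The step I expect to require the most care is admissibility: confirming that Proposition \ref{prop13} genuinely applies and that no power escapes $\mathcal{P}(\real_+)$. For the chosen $p,q$ one has $p = \frac{t+s+1}{t+1} \geq 1$ and $q = \frac{s+1}{t+s+1} \leq 1$ precisely because $s,t \geq 0$, while $1-\frac{1}{p} = \frac{s}{t+s+1} < \frac{s+1}{t+s+1} = q$ holds trivially, so the hypotheses $p \geq 1$, $1-\frac{1}{p} < q \leq 1$ are met. Tracing the exponents, every Boolean power that occurs, namely $\frac{1}{t+1}$, $\frac{1}{s+1}$, $\frac{s+1}{t+s+1}$ and the collapsed $\frac{1}{t+s+1}$, lies in $[0,1]$, so by Bercovici's theorem (recalled after Definition \ref{def122}) each intermediate measure stays in $\mathcal{P}(\real_+)$; the free exponents $t+1$, $s+1$, $\frac{t+s+1}{t+1}$, $t+s+1$ are all $\geq 1$, where multiplicative free powers are defined. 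Hence the full chain of equalities is legitimate, and the boundary cases $t=0$ or $s=0$ reduce to $\mathbb{M}_0 = \mathrm{id}$.
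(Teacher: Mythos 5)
Your proof is correct and is essentially the paper's own argument: the paper states Theorem \ref{thm142} without a written proof, implicitly deferring to the proof of Theorem \ref{thm141}, and your proposal is exactly that argument transplanted to $\mathcal{P}(\real_+)$ --- same expansion of $\mathbb{M}_s\circ\mathbb{M}_t$, same choice $p=\frac{t+s+1}{t+1}$, $q=\frac{s+1}{t+s+1}$ in Proposition \ref{prop13}, and the same collapsing of free and Boolean powers. Your added admissibility checks (that $1-\frac1p<q\le 1$ holds and that every Boolean exponent stays in $[0,1]$ so all intermediate measures remain in $\mathcal{P}(\real_+)$) are precisely the points the paper glosses over, and they are verified correctly.
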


We can also define a free divisibility indicator:
\[
\theta(\mu)= \sup \{t \geq 0: \mu \in \mathbb{M}_t(\mathcal{P}(\real_+)) \}
\]
for $\mu \in \mathcal{P}(\real_+)$.
Some results of Theorem \ref{thm87} have no counterparts for probability measures on the positive real line. This is because the Bercovici-Pata map is not surjective and a Boolean power cannot be defined for a large time. We can, however, still prove the following.
\begin{prop} Let $\mu$ be a probability measure on $\mathcal{P}(\real_+)$. Then \\
(1) $\mu^{\boxtimes t}$ exists for $t \geq \max\{1-\theta(\mu),0 \}$, \\
(2) $\mu$ is $\boxtimes$-infinitely divisible if $\theta(\mu) \geq 1$, \\
(3) $\theta(\mathbb{M}_t(\mu)) = \theta(\mu)+t$ for $t \geq -\theta(\mu)$.
\end{prop}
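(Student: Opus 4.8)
The plan is to transport the additive argument of Theorem~\ref{thm001} (and its unit-circle version, Theorem~\ref{thm87}) to $\mathcal{P}(\real_+)$, substituting Proposition~\ref{prop13} for the commutation relation and Theorem~\ref{thm142} for the semigroup law, while keeping constant track of the admissible exponents: free powers $\boxtimes s$ exist only for $s\ge 1$, Boolean powers $\hutimes s$ only for $0\le s\le 1$, and Proposition~\ref{prop13} carries the extra restriction $q\le 1$. The foundation for part~(3) is the claim that the supremum defining $\theta(\mu)$ is \emph{attained}: there is a measure $\nu$ with $\mathbb{M}_{\theta(\mu)}(\nu)=\mu$, unique because each $\mathbb{M}_t$ is injective. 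Granting this, one defines $\mathbb{M}_t(\mu):=\mathbb{M}_{t+\theta(\mu)}(\nu)$ for $-\theta(\mu)\le t<0$, which is what makes the range $t\ge-\theta(\mu)$ in part~(3) meaningful.

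For part~(3) with $t\ge 0$, I would argue that for $s\ge t$ the membership $\mathbb{M}_t(\mu)\in\mathbb{M}_s(\mathcal{P}(\real_+))$ is equivalent, via $\mathbb{M}_s=\mathbb{M}_t\circ\mathbb{M}_{s-t}$ and injectivity of $\mathbb{M}_t$, to $\mu\in\mathbb{M}_{s-t}(\mathcal{P}(\real_+))$; since every $s\le t$ trivially lies in the range, taking suprema gives $\theta(\mathbb{M}_t(\mu))=\theta(\mu)+t$. For $-\theta(\mu)\le t<0$, write $\mu=\mathbb{M}_{\theta(\mu)}(\nu)$; the nonnegative case applied to $\nu$ forces $\theta(\nu)=0$, and then $\theta(\mathbb{M}_t(\mu))=\theta(\mathbb{M}_{t+\theta(\mu)}(\nu))=\theta(\nu)+t+\theta(\mu)=t+\theta(\mu)$, valid precisely when $t\ge-\theta(\mu)$.

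For part~(1), set $\theta=\theta(\mu)$. If $\theta\ge 1$ then $\max\{1-\theta,0\}=0$, and by part~(2) the measure $\mu$ is $\boxtimes$-infinitely divisible, so $\mu^{\boxtimes t}$ exists for all $t\ge 0$. If $0\le\theta<1$, write $\mu=\mathbb{M}_\theta(\nu)=(\nu^{\boxtimes(\theta+1)})^{\hutimes\frac{1}{\theta+1}}$ and apply Proposition~\ref{prop13} with $p=\theta+1$, $q=\frac{1}{\theta+1}$; the hypotheses $p\ge1$ and $1-\frac1p=\frac{\theta}{\theta+1}<\frac{1}{\theta+1}=q\le 1$ hold exactly because $\theta<1$, and one computes $q'=1-\theta$, $p'=\frac{1}{1-\theta}$, so $\mu=\rho^{\boxtimes\frac{1}{1-\theta}}$ with $\rho:=\nu^{\hutimes(1-\theta)}\in\mathcal{P}(\real_+)$. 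Since $\frac{1}{1-\theta}\ge 1$ and free powers of order $\ge 1$ exist on $\real_+$, one has $\Sigma_\mu=\Sigma_\rho^{\,1/(1-\theta)}$, whence $\mu^{\boxtimes t}:=\rho^{\boxtimes\frac{t}{1-\theta}}$ is a genuine probability measure for $\frac{t}{1-\theta}\ge 1$, i.e.\ for $t\ge 1-\theta$, and satisfies $\Sigma_{\mu^{\boxtimes t}}=\Sigma_\mu^{\,t}$ as required.

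For part~(2), I would first verify $\mathbb{M}_1=\Lambda_{MB}$ on $\mathcal{P}(\real_+)$ by exactly the transform computation that proves $\mathbb{B}_1=\Lambda_B$ additively, now carried out with $f_\mu(z)=\log\eta_\mu(e^z)$ in place of $F_\mu$; by (\ref{eq04}) the image of $\mathbb{M}_1$ consists of $\boxtimes$-infinitely divisible measures. If $\theta(\mu)\ge 1$, then $\mathbb{M}_{\theta(\mu)}=\mathbb{M}_1\circ\mathbb{M}_{\theta(\mu)-1}$ yields $\mu\in\mathbb{M}_1(\mathcal{P}(\real_+))$, so $\mu$ is $\boxtimes$-infinitely divisible; only the ``if'' direction survives, reflecting the non-surjectivity of $\Lambda_{MB}$ recorded after (\ref{eq04}). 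The genuinely delicate step, and the one I expect to be the main obstacle, is the attainment of the supremum underlying part~(3). Unlike $\mathcal{P}(\real)$ in the Belinschi--Nica setting, here $\mathcal{P}(\real_+)$ is not weak-$*$ compact, $\Lambda_{MB}$ is not onto, and Boolean powers survive only up to time $1$; I would therefore establish closedness of $\mathbb{M}_{\theta(\mu)}(\mathcal{P}(\real_+))$ by a Pick--Nevanlinna normal-families argument applied to the representation (\ref{eq01}) of $\log k_\mu$ together with the $\Sigma$-functions, showing that the preimages $\mathbb{M}_{t}^{-1}(\mu)$ along $t\nearrow\theta(\mu)$ converge to the desired $\nu$.
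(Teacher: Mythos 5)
Your proposal is correct and takes essentially the same route as the paper, whose own proof is a one-line deferral (``all the proofs are similar to the previous cases,'' citing Theorem \ref{thm87} and Section 5 of \cite{Bel2}, plus the remark that only the ``if'' direction in (2) survives because $\Lambda_{MB}$ is not surjective): your exponent bookkeeping in Proposition \ref{prop13} (with $p=\theta+1$, $q=\tfrac{1}{\theta+1}$, hence $q'=1-\theta$, $p'=\tfrac{1}{1-\theta}$), the semigroup-plus-injectivity argument for (3), and the identification $\mathbb{M}_1=\Lambda_{MB}$ are precisely the intended adaptations of the additive and unit-circle arguments. The attainment of the supremum defining $\theta(\mu)$, which you rightly single out as the delicate point and only sketch, is likewise left unproved by the paper --- it is inherited from the Belinschi--Nica compactness argument in \cite{Bel2} --- so flagging it explicitly makes your write-up, if anything, more careful than the original.
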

\begin{proof}
All the proofs are similar to the previous cases (see Theorem \ref{thm87} of this paper and Section 5 of \cite{Bel2}). A remark on (2) is that the $\boxtimes$-infinite divisibility of $\mu \in \mathcal{P}(\real_+)$ does not imply $\theta(\mu) \geq 1$ since the Bercovici-Pata map is not surjective, see Example \ref{booleanstable}.
\end{proof}

\begin{exa}\label{booleanstable}
Let $\mathbf{b}_\alpha$ be a positive $\uplus$-strictly stable law with index $0<\alpha\leq1$ which is characterized by $\eta_  {\mathbf{b}_\alpha}(z)=-(-z)^\alpha$. 
We also have $\Sigma_{\mathbf{b}_\alpha}=(-z)^{\frac{1-\alpha}{\alpha}}$. So
$\Sigma_{(\mathbf{b}_\alpha)^{\boxtimes t}}=(-z)^{\frac{1-\alpha}{\alpha}t}$
and $$\eta_{(\mathbf{b}_\alpha)^{\boxtimes t}}=-(-z)^{\frac{\alpha}{(1-\alpha)t+\alpha}}.$$ This means, on one hand,  that $(\mathbf{b}_\alpha)^{\boxtimes t} = \mathbf{b}_{\frac{\alpha}{(1-\alpha)t+\alpha}}$ and hence $\mathbf{b}_\alpha$ is $\boxtimes$-infinitely divisible. 

On the other hand, 
$$\eta_{\left((\mathbf{b}_\alpha)^{\boxtimes t}\right)^{\sutimes 1/t}}=-(-z)^{\frac{2\alpha-1+(1-\alpha)t}{(1-\alpha)t+\alpha}}.$$
So, for $\beta={\frac{2\alpha-1+(1-\alpha)t}{(1-\alpha)t+\alpha}}$ or equivalently $\alpha = \frac{\beta - (1-\beta)t}{1- (1-\beta)t}$, we have $\mathbb{M}_t(\mathbf{b}_\alpha)=\mathbf{b}_{\beta}$. This means, given $\beta \in (0,1]$, $\mathbf{b}_{\beta}\in\mathbb{M}_t(\mathcal{P}(\real_+))$ if and only if $0<t<\frac{\beta}{1-\beta}$ and then 
$$
\theta(\mathbf{b}_\alpha)=\frac{\alpha}{1-\alpha}.
$$ 
To summarize, if $\alpha<1/2 $,  $\theta(\mathbf{b}_\alpha)<1$ but
$\mathbf{b}_\alpha$ is $\boxtimes$-infinitely divisible, showing that $\boxtimes$-infinite divisibility of $\mu$ does not imply that $\theta(\mu)\geq1.$
\end{exa}

\section{Commutation relations between Boolean and free convolution powers}\label{sec4}
We have seen that convolution powers for $\boxtimes$ and $\putimes$ on $\real_+$ satisfy the same commutation relation as the additive case (see Propositions \ref{prop5}, \ref{prop13}). Moreover, there are other commutation relations involving free and Boolean powers. We will prove such relations in this section. These are useful to construct new examples of $\boxtimes$-infinitely divisible distributions.

\begin{prop}\label{comm1}
The following commutation relations hold for $\mu \in \mathcal{P}(\real_+)$. \\
(1) $(\mu^{\boxplus t})^{\boxtimes s} = D_{t^{s-1}}(\mu^{\boxtimes s})^{\boxplus t}$ for $t\geq 1$ and $s \geq 1$. \\
(2) $(\mu^{\uplus t})^{\boxtimes s} = D_{t^{s-1}}(\mu^{\boxtimes s})^{\uplus t}$ for $t \geq 0$ and $s \geq 1$. \\
(3) $(\mu^{\uplus t})^{\hutimes s} = (\mu^{\hutimes s})^{\uplus t^s}$ for $t \geq 0$ and $s \leq 1$. 
\end{prop}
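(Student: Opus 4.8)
The plan is to prove all three commutation relations by translating each identity into the appropriate transform and verifying the resulting functional equation directly. The guiding principle is that additive powers act linearly on $\phi_\mu$ (free) or $K_\mu$ (Boolean), multiplicative powers act multiplicatively on $\Sigma_\mu$ (free) or $k_\mu$ (Boolean), and a dilation $D_r$ rescales the argument of these transforms in a controlled way. The main bookkeeping task is to track how dilation interacts with each convolution, so I would first record the scaling rules $\Sigma_{D_r\mu}(z) = r\,\Sigma_\mu(z)$ and $k_{D_r\mu}(z) = r\,k_\mu(z)$ for $r>0$, together with $\eta_{D_r\mu}(z) = \eta_\mu(rz)$, since these are the facts that make the factors $t^{s-1}$ appear.

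For part (2), which I would prove first as it is the cleanest, I would work with the transform $f_\mu(z) = \log(\eta_\mu(e^z))$ introduced in Section \ref{sec3}, under which the Boolean power $\mu^{\uplus t}$ and the multiplicative free power $\mu^{\boxtimes s}$ each become a simple linear operation on the corresponding transform. The Boolean additive power multiplies the energy-type function $f_\mu(z)-z$ by $t$, while the multiplicative free power scales $f^{-1}_\mu(z)-z$ by $s$; a dilation shifts $f_\mu$ by an additive constant $\log r$. I would then compute both $f_{(\mu^{\uplus t})^{\boxtimes s}}$ and $f_{D_{t^{s-1}}(\mu^{\boxtimes s})^{\uplus t}}$ and check that the two agree, with the constant $\log(t^{s-1}) = (s-1)\log t$ from the dilation exactly absorbing the discrepancy between first performing the free multiplicative scaling and first performing the Boolean additive scaling. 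Part (3) follows the same strategy using $f_\mu(z)-z$ for both $\uplus$ and $\hutimes$; here both operations are of additive/linear type after passing to $f_\mu$, the power $t^s$ on the right arises because the Boolean multiplicative power $\hutimes s$ rescales the variable by $s$ inside the exponent while the Boolean additive power contributes a factor $t$, and the restriction $s \leq 1$ is precisely the range in which $\mu^{\hutimes s}$ is guaranteed to stay in $\mathcal{P}(\real_+)$ by Bercovici's result \cite{Ber2}.

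For part (1), which mixes the free additive and free multiplicative powers, I would use the Voiculescu transform $\phi_\mu$ together with $\Sigma_\mu$, exploiting the fact that for $t\geq 1$ the free additive power $\mu^{\boxplus t}$ is always defined and scales $\phi_\mu$ by $t$, while $\mu^{\boxtimes s}$ for $s \geq 1$ scales $\log\Sigma_\mu$ by $s$. The cleanest route is again through $f_\mu$ and its inverse, verifying that both sides have the same $f^{-1}_\mu(z)-z$. The role of the dilation factor $t^{s-1}$ is to correct the fact that scaling the mean (implicit in the free additive power, which does not fix the first moment) commutes with the multiplicative free power only up to this explicit rescaling. I expect the verification to reduce, after substitution, to the elementary identity relating the two orders of scaling.

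The main obstacle I anticipate is \emph{not} the algebra of the transforms, which is routine once the scaling rules are in place, but rather justifying that every convolution power appearing on both sides is actually well defined as a probability measure in the stated ranges, and that the formal transform identities genuinely characterize the measures (i.e.\ hold on a common domain where the transforms are injective). In particular, for part (1) one must ensure $t\geq 1, s\geq 1$ keeps all free powers in their domain of definition, and for part (3) the constraint $s\leq 1$ must be invoked to keep the Boolean multiplicative powers inside $\mathcal{P}(\real_+)$; I would state these domain checks explicitly and lean on the fact, noted after Definition \ref{def122}, that $\mu^{\hutimes t}\in\mathcal{P}(\real_+)$ for $0\leq t\leq 1$.
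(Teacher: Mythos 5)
Your treatment of part (3) is correct and is essentially the paper's own proof: the verification using $\eta_{\mu^{\uplus t}}(z)=t\,\eta_\mu(z)$ and $\eta_{\mu^{\sutimes s}}(z)/z=(\eta_\mu(z)/z)^s$ is exactly what the paper writes out, and your $f_\mu$ formulation is the same computation after taking logarithms (the dilation never even enters there). For parts (1) and (2), however, there are two genuine problems. First, the scaling rules you propose to record at the outset are wrong: from your own (correct) identity $\eta_{D_r\mu}(z)=\eta_\mu(rz)$ one derives $\Sigma_{D_r\mu}(z)=\frac{1}{r}\Sigma_\mu(z)$ and $k_{D_r\mu}(z)=\frac{1}{r}k_\mu(rz)$, not $r\,\Sigma_\mu(z)$ and $r\,k_\mu(z)$; as stated, your three rules are mutually inconsistent. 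This is not cosmetic: in part (2) one computes $\Sigma_{(\mu^{\uplus t})^{\boxtimes s}}(z)=t^{-s}\,\Sigma_\mu(z/t)^s$ while $\Sigma_{(\mu^{\boxtimes s})^{\uplus t}}(z)=t^{-1}\,\Sigma_\mu(z/t)^s$, so the dilation $D_{t^{s-1}}$ must contribute the factor $t^{-(s-1)}$; with your rule it contributes $t^{s-1}$ instead, and the two sides then agree only when $s=1$.

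Second, and more seriously, your plan for part (1) would not go through as described. The quantity $f_\mu^{-1}(z)-z$ linearizes $\boxtimes$ and $f_\mu(z)-z$ linearizes $\putimes$, but the free \emph{additive} power has no simple action on $f_\mu$, $\eta_\mu$, or their inverses: the relation $\phi_{\mu^{\boxplus t}}=t\,\phi_\mu$ lives on $F_\mu^{-1}$, a different transform on a different domain, so ``verifying that both sides have the same $f_\mu^{-1}(z)-z$'' is not routine algebra. One needs a substantive bridge between the additive and multiplicative calculi --- subordination functions, or the relation between the $R$- and $S$-transforms --- and this is precisely the content of Proposition 3.5 of \cite{Bel2}, which the paper cites for (1) and (2) rather than reproving; your sketch is missing that key ingredient. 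Note also that for $(\mu^{\boxplus t})^{\boxtimes s}$ even to be meaningful one needs $\supp \mu^{\boxplus t}\subset \real_+$ for $t\geq 1$, which the paper quotes from Subsection 2.4 of \cite{Bel2}; your generic remark about ``domain checks'' should be sharpened to this specific support-preservation fact. Your appeal to \cite{Ber2} for the restriction $s\leq 1$ in part (3) is correct and matches the paper.
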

\begin{proof} We note that $\mu^{\boxplus t}$ is supported on $\real_+$ for $t \geq 1$ (see discussions in Subsection 2.4 of \cite{Bel2}).  (1) and (2) were essentially proved in Proposition 3.5 of \cite{Bel2}.

(3) We recall the relations $\eta_{\mu^{\uplus t}}(z) = t \eta_\mu(z)$ and $\eta_{\mu^{\hutimes s}}(z)/z = (\eta_\mu(z)/z)^s$. For the left hand side, we have
\[
\frac{\eta_{(\mu^{\uplus t})^{\sutimes s}}(z)}{z} = \left(\frac{\eta_{\mu^{\uplus t}}(z)}{z}\right)^s = t^s \left(\frac{\eta_\mu(z)}{z}\right)^s,
\]
and for the right hand side,
\[
\frac{\eta_{(\mu^{\sutimes s})^{\uplus t^s}}(z)}{z} = t^s\frac{\eta_{\mu^{\sutimes s}}(z)}{z} = t^s \left(\frac{\eta_\mu(z)}{z}\right)^s.
\]
Therefore, they coincide.
\end{proof}
\begin{rem}
(i) The parameter $t$ in (1) may not be extended to $t \geq 0$ even if $\mu$ is $\boxplus$-infinitely divisible. This is because $\text{supp~} \mu \subset [0,\infty)$ does not imply $\text{supp~} \mu^{\boxplus t} \subset [0,\infty)$ for every $t \geq 0$. We will discuss this problem in another paper \cite{AHS}.  \\
(ii) In Propositions \ref{prop5}, \ref{prop13} and \ref{comm1}, we have derived five commutation relations among $\boxplus, \boxtimes, \uplus$ and $\putimes$. The only missing relation is for the pair $\boxplus$ and $\putimes$. The question of if there is an algebraic relation between these two convolutions is an open problem.
\end{rem}

The following result is immediate.
\begin{cor}\label{thm05}
If $\mu \in \mathcal{P}(\real_+)$ is $\boxtimes$-infinitely divisible, then so are $\mu^{\boxplus t}$ for $t \geq 1$ and $\mu^{\uplus s}$ for $s \geq 0$.
\end{cor}

Moreover, from (2) and (3) of Proposition \ref{comm1} we see that the multiplicative divisibility indicator does not change under the action of Boolean additive powers.
\begin{cor}
If $\mu \in \mathcal{P}(\real_+)$ then $\theta(\mu^{\uplus s})=\theta(\mu)$ for any $s>0$.
\end{cor}

It is well known that the free Poisson distribution $\pi$, characterized by $\phi_\pi(z) = \frac{z}{z-1}$, is both $\boxplus$ and $\boxtimes $-infinitely divisible. The following example generalizes this fact for different powers of $\pi$.
\begin{exa}
Let $\pi_{t,s,r} :=(({\pi }^{\boxplus t} )^{\boxtimes s} )^{\uplus r}$ for $r,s,t \geq 0$. It is clear from Corollary \ref{thm05} that $\pi_{t,s,r} $ is $\boxtimes $-infinitely divisible for $r,s \geq 0$ and $t \geq 1$.  Moreover, since  ${\pi }^{\boxplus t}$ is supported on $\real_+$ for every $t>0$, combining Propositions~\ref{Boz} and \ref{comm1}, we see that $\pi_{t,s,r} $ is $\boxplus$-infinitely divisible for $r \leq 1$, $s \geq 1$ and $t \geq 0$.
In particular, if $r \leq 1$ and $s,t \geq 1$, $\pi_{t,s,r}$ is infinitely divisible with respect to both $\boxplus$ and $\boxtimes$.

On the other hand, ${\pi }^{\boxplus t}$ is not $\boxtimes$-infinitely divisible for $t<1$ as shown by P\'{e}rez-Abreu and Sakuma; see Proposition 10 of \cite{PA-S}. This shows that we cannot extend Corollary~\ref{thm05} to $t<1$, even if ${\mu }^{\boxplus t}$ exists.
\end{exa}
\appendix

\section{Appendix} 
\subsection{Fixed points of $\mathbb{B}_t$}\label{sec5}
As we have shown, some measures have free divisibility indicators infinity as well as Cauchy distributions. So one may ask if this is because of some fixed point property. 
In this section we determine all the fixed points of the Boolean-to-free Bercovici-Pata bijection and more generally of $\mathbb{B}_t$ for each $t >0$. The key is the following functional equation for analytic maps.
\begin{lem}\label{lema1}
Let $F: \comp^+ \rightarrow \mathbb{C}^+$ be an analytic map such that
\begin{equation}\label{mastereq}
 F(z)-z=F(F(z))-F(z),~~z\in \mathbb{C}^+. 
\end{equation}
Then $F(z)=z+c$ for some $c \in \mathbb{C}^+ \cup \real$.
\end{lem}
\begin{proof} 
If $F$ is identity, (\ref{mastereq}) is trivially satisfied. So we may assume that $F$ is non identical. 
Note that $F$ is injective. Indeed, if $F(z)=F(w)$ then $F(F(z))=F(F(w))$ and then from Equation (\ref{mastereq}) $z=2F(z)-F(F(z))=2F(w)-F(F(w))=w$.

Now take $z_0\in \comp^+$ so that $F(z_0) \neq z_0$ and define $c:=F(z_0)-z_0$. Moreover, suppose that  $F$ is not identically equal to $z+c$. Let $D$ be a bounded domain such that $\overline{D}\subset\mathbb{C}^+$. Then, by the Identity Theorem, $F(z)=z+c$ for at most a finite number of points inside $D$. Hence, there exists a radius $r$, such that the ball $B_r(z_0)$ satisfies that $F(z)-z\neq c$ for all $z\in \overline{B_r(z_0)}\setminus \{z_0\}$. Moreover we may assume that $F(z_0) \notin \overline{B_r(z_0)}$  since $F(z_0)\neq z_0$. Since $F$ is injective, $F^{\circ n+1}(z_0)\notin F^{\circ n}(\overline{B_r(z_0)})$ for any $n\geq 1$.  Let us consider the curve $C=\partial B_r(z_0)$. Since $C$ is compact, there exists $t>0$ such that $|F(z)-z-c|>t$ for all $z\in C$. 

Take an arbitrary $z\in C$. If we write $F(z)-z=d$, then $|d-c|>t$. From the iterative use of (\ref{mastereq}), we have $F^{\circ n}(z)=z+nd$ and $F^{\circ n}(z_0)=z_0+nc$ and then we see that $|F^{\circ n}(z)-F^{\circ n}(z_0)|>tn-|z_0|-|z|>tn-2|z_0|-r$ which tends to $\infty$ as $n\to\infty$. Thus for all  $R>0$, there is $N$ large enough such $|F^{\circ n}(z)-F^{\circ n}(z_0)|>R$ for all $n>N$ and all $z\in C$. 

Finally, for each $n>0$, $F^{\circ n}(C)$ is a simple curve surrounding $F^{\circ n}(z_0)$. Hence, by the considerations above, for $n$ large enough $F^{\circ n}(C)$ encloses $B_{2|c|}(F^{\circ n}(z_0))$. In particular, since $F^{\circ n+1}(z_0)-F^{\circ n}(z_0)=c$, the curve $F^{\circ n}(C)$ must surround $F^{\circ n+1}(z_0)$, contradicting the fact $F^{\circ n+1}(z_0)\notin F^{\circ n}(\overline{B_r(z_0)})$.
\end{proof}

\begin{thm}
Let $t>0$ be real and $\mu$ be a fixed point of $\mathbb{B}_t$, i.e. $\mathbb{B}_t(\mu)=\mu$. Then $\mu$ is a point measure or a Cauchy distribution $\gamma_{a,b}$ with density
$$
\gamma_{a,b}(x)=\frac{b}{\pi[(x-a)^2+b^2]},~~~ x\in\mathbb{R}
$$ 
for some $a \in \mathbb{R}$, $b >0$. 
\end{thm}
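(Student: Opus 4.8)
The plan is to translate the fixed-point equation $\mathbb{B}_t(\mu)=\mu$ into a functional equation for the Voiculescu transform $\phi_\mu$ and then invoke Lemma \ref{lema1}. First I would record that a fixed point is automatically freely infinitely divisible: since $\phi(\mathbb{B}_t(\mu))=\phi(\mu)+t$ by (\ref{eq7}), the equality $\mathbb{B}_t(\mu)=\mu$ forces $\phi(\mu)=\phi(\mu)+t$, which is possible only when $\phi(\mu)=\infty$. By Theorem \ref{thm001}(2) this gives $\phi(\mu)\ge 1$, so $\mu$ is $\boxplus$-infinitely divisible; in particular $\phi_\mu$ extends analytically to all of $\comp_+$ with $\im\phi_\mu\le 0$, and every power $\mu^{\boxplus s}$ is available.

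Next I would unwind the definition $\mathbb{B}_t(\mu)=(\mu^{\boxplus(1+t)})^{\uplus\frac{1}{1+t}}$. Writing $\rho:=\mu^{\boxplus(1+t)}$, the identity $\phi_{\rho}=(1+t)\phi_\mu$ becomes $F_\rho^{-1}(z)=(1+t)F_\mu^{-1}(z)-tz$, while the Boolean-power relation $K_\mu=\frac{1}{1+t}K_\rho$, which is precisely the fixed-point condition $\rho^{\uplus 1/(1+t)}=\mu$, becomes $F_\rho(z)=(1+t)F_\mu(z)-tz$. Thus $F_\rho$ and its inverse are the same affine combination of $F_\mu$, resp.\ $F_\mu^{-1}$, with the identity. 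I would then eliminate $\rho$ by substituting these two relations into $F_\rho^{-1}\circ F_\rho=\mathrm{id}$. Setting $\tilde G:=\phi_\mu=F_\mu^{-1}-\mathrm{id}$ and simplifying, the $(1+t)$-factors cancel and one is left with the clean functional equation
\[
\phi_\mu(w)=\phi_\mu\big(w-t\,\phi_\mu(w)\big),\qquad w\in\comp_+.
\]

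With this in hand I would introduce the auxiliary map $F(w):=w-t\,\phi_\mu(w)$, so that the displayed equation reads $\phi_\mu(F(w))=\phi_\mu(w)$, whence $F(w)-w=-t\phi_\mu(w)=-t\phi_\mu(F(w))=F(F(w))-F(w)$. This is exactly equation (\ref{mastereq}) of Lemma \ref{lema1}. It remains to verify the hypotheses. Since $\im\phi_\mu\le 0$ we have $\im F(w)\ge\im w$; to get strictness I would invoke the maximum principle: $\im\phi_\mu$ is harmonic and $\le 0$ on $\comp_+$, so either it attains $0$ at an interior point and hence vanishes identically, or $\im\phi_\mu<0$ everywhere. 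In the first case $\phi_\mu$ is a real constant, $F_\mu(z)=z-\gamma$, and $\mu=\delta_\gamma$ is a point measure. In the second case $\im F(w)>\im w$, so $F:\comp_+\to\comp_+$ satisfies all the hypotheses of Lemma \ref{lema1}, giving $F(w)=w+a+ib$ with $b>0$. Then $\phi_\mu\equiv -(a+ib)/t$ is constant with negative imaginary part, so $F_\mu(z)=z-c$ with $\im(-c)>0$; this is precisely the reciprocal Cauchy transform of a Cauchy distribution $\gamma_{a',b'}$.

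The main obstacle I expect is the elimination step that produces the functional equation in exactly the form required by Lemma \ref{lema1}: one must choose the correct auxiliary function $F(w)=w-t\phi_\mu(w)$, rather than working with $F_\mu$ or $F_\rho$ directly, and carry out the substitution carefully so that all $t$-dependence collapses into the single self-composition identity. The secondary point requiring care is the strict-inequality hypothesis $\im F>\im(\cdot)$, which fails exactly on the degenerate point-mass branch; isolating that branch via the maximum principle is what splits the conclusion into the two asserted families.
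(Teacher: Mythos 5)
Your proposal is correct, and its core is the same as the paper's: reduce the fixed-point condition to the self-composition identity (\ref{mastereq}) and invoke Lemma \ref{lema1}, whose conclusion $F(z)=z+a+ib$ then forces the transform to be a constant shift, i.e.\ $\mu$ Cauchy (or a point mass in the degenerate branch). The implementations differ in a ``dual'' way. You first force $\phi(\mu)=\infty$ from the indicator relation (\ref{eq7}), so that $\mu$ is $\boxplus$-infinitely divisible and $\phi_\mu$ is globally defined with $\im\phi_\mu\le 0$; you then carry the master equation on the free-side map $F(w)=w-t\phi_\mu(w)$, splitting off the point-mass case by the maximum principle applied to $\im\phi_\mu$. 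The paper never needs infinite divisibility: it carries the equation on the Boolean-side map $W_\mu(z)=tF_\mu(z)-(t-1)z=F_{\mu^{\uplus t}}(z)$ (your $F$ with $\phi_\mu$ replaced by $-K_\mu$), which is defined for \emph{every} probability measure, and it splits off point masses at the outset via the standard dichotomy that $\im F_\mu(z)>\im z$ unless $\mu$ is degenerate. Your route buys the cleaner functional equation $\phi_\mu(w)=\phi_\mu(w-t\phi_\mu(w))$ and a pleasant conceptual first step, at the cost of importing Theorem \ref{thm001}; the paper's version is more self-contained and avoids handling $\phi_\mu$ globally. One shared informality (not a gap specific to you): the elimination through $F_\rho^{-1}\circ F_\rho=\mathrm{id}$ is a priori valid only on a truncated cone where the inverses genuinely invert, so the resulting functional equation must be propagated to all of $\comp_+$ by the identity theorem — both sides being analytic there — exactly as the paper implicitly does when it applies $F_\mu$ to both sides and then works on $\comp_+$.
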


\begin{proof}
From the basic properties of free and boolean convolutions,  
\begin{equation} \label{eq3}
F_{\mu^{\boxplus (t+1)}}^{-1}(z)=(t+1)F_{\mu}^{-1}(z)-tz 
\end{equation}
and
\begin{equation} \label{eq4}
F_{\mu^{\uplus (t+1)}}(z)=(t+1)F_{\mu}(z)-tz. 
\end{equation}
Recall that $\mathbb{B}_t(\mu)=\mu$ is  equivalent to $\mu^{\boxplus (t+1)}=\mu^{\uplus (t+1)}$. Plugging (\ref{eq4}) into (\ref{eq3}) we have 
$$
z=F_{\mu^{\boxplus (t+1)}}^{-1}(F_{\mu^{\uplus (t+1)}}(z))=
(t+1)F_{\mu}^{-1}((t+1)F_{\mu}(z)-tz)-t\left((t+1)F_{\mu}(z)-tz\right),
$$
from which
 $$
 F_{\mu}^{-1}((t+1)F_{\mu}(z)-tz)=tF_{\mu}(z)-(t-1)z.
 $$
Applying $F_\mu$ to both sides of the previous equation we get
$$
(t+1)F_{\mu}(z)-tz=F_\mu(tF_{\mu}(z)-(t-1)z)
$$
or
$$
F_\mu(z)-z=F_\mu(tF_{\mu}(z)-(t-1)z)-tF_{\mu}(z)+(t-1)z. 
$$
Now, let $W_\mu(z)=F_{\mu^{\uplus t}}(z)=tF_\mu(z)-(t-1)z$ then 
$$
F_\mu(z)-z=F_\mu(W_\mu(z))-W_\mu(z). 
$$ 
Multiplied by $t$, this equation becomes 
$$
W_\mu(z)-z=W_\mu(W_\mu(z))-W_\mu(z). 
$$
This equation is exactly Equation (\ref{mastereq}) for $F=W_\mu$ which satisfies the assumptions of Lemma \ref{lema1}. So, $F_{\mu^{\uplus t}}$ is of the form $z-a_0+ib_0$ for some $a_0\in\mathbb{R}$ and $b_0 \geq 0$. This, in turn, implies that  $F_\mu(z)=z-a+ib$, where $a=\frac{a_0}{t}$ and $b=\frac{b_0}{t}$. If $b=0$, then $\mu=\delta_{a}$ and if $b>0$, $\mu=\gamma_{a,b}.$ 
\end{proof}
\begin{cor}The Boolean-to-free Bercovici-Pata bijection $\Lambda_B$ has no periodic points of order greater than one.
\end{cor}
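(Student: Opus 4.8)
The plan is to reduce this periodicity statement to the fixed-point classification of the preceding theorem by exploiting the semigroup structure of $\{\mathbb{B}_t\}_{t \geq 0}$. The essential inputs are the Belinschi--Nica semigroup property $\mathbb{B}_{t+s} = \mathbb{B}_t \circ \mathbb{B}_s$ together with the identification $\Lambda_B = \mathbb{B}_1$. First I would combine these to compute the $n$-fold iterate of the bijection: for every $n \geq 1$ one has $\Lambda_B^{\circ n} = \mathbb{B}_1^{\circ n} = \mathbb{B}_n$.

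Next, suppose $\mu$ is a periodic point, so that $\Lambda_B^{\circ n}(\mu) = \mu$ for some $n \geq 1$. By the previous step this reads $\mathbb{B}_n(\mu) = \mu$; that is, $\mu$ is a fixed point of $\mathbb{B}_n$. Since $n > 0$, the preceding theorem applies with parameter $t = n$ and forces $\mu$ to be a point measure or a Cauchy distribution $\gamma_{a,b}$.

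Finally I would observe that point measures and Cauchy distributions are fixed points of $\mathbb{B}_t$ for \emph{every} $t > 0$; this is recalled in the Introduction, where they are identified as the fixed points of the homomorphism semigroup. In particular $\Lambda_B(\mu) = \mathbb{B}_1(\mu) = \mu$, so the minimal period of any periodic $\mu$ is already one. Consequently there can be no periodic point of order strictly greater than one, which is the assertion.

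I do not anticipate a genuine obstacle here, since all the analytic content has been packaged into the preceding theorem and Lemma~\ref{lema1}. The only point requiring mild care is the interpretation of ``order greater than one'': the argument in fact shows that any $\mu$ satisfying $\Lambda_B^{\circ n}(\mu) = \mu$ for some $n \geq 1$ must already satisfy $\Lambda_B(\mu) = \mu$, so the statement is equivalent to the assertion that every periodic point of $\Lambda_B$ is a genuine fixed point.
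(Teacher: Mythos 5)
Your argument is correct and is precisely the one the paper intends: the semigroup property gives $\Lambda_B^{\circ n}=\mathbb{B}_1^{\circ n}=\mathbb{B}_n$, the preceding theorem forces any such fixed point to be a point measure or a Cauchy distribution, and these are already fixed points of $\mathbb{B}_1$, so every periodic point has period one. Nothing is missing; your care about the meaning of ``order greater than one'' matches the intended reading.
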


\subsection{Proof of Proposition \ref{di}}
\begin{lem} \label{ap}
For any $m, n\in\mathbb{Z}, c \in \tor, t>0$ and $\mu \in \id (\putimes; \tor)_0$, the following are equivalent: \\ 
(1) $\mu^{\boxtimes_m t}$ exists in $\id (\putimes; \tor)_0$; \\
(2) $(\mu\putimes\delta_c)^{\boxtimes_n t}$ exists in $\id (\putimes; \tor)_0$. 
\end{lem}
\begin{proof}
Note that  $\Sigma_{\mu \hutimes \delta_c}(z)=\frac{1}{c}\Sigma_{\mu}(\frac{z}{c})$ and $\Sigma_{\mu \boxtimes \delta_c}(z)=\frac{1}{c}\Sigma_\mu(z)$.
Hence $u=u(\mu,c,m,n)$ exists such that 
$$
\Sigma_{(\mu \hutimes\delta_c)^{\boxtimes_n  t}}(z)=\Sigma_{\mu \hutimes \delta_c}(z)^{t}=u\Sigma_{\mu ^{\boxtimes_m t}}\left(\frac{z}{c}\right)=cu \Sigma_{\mu ^{\boxtimes_m t}\hutimes \delta_c}(z)=
\Sigma_{\left(\mu ^{\boxtimes_m t}\hutimes \delta_c\right)\boxtimes  \delta_{cu} }(z).
$$
Thus $\left(\mu \putimes \delta_c\right)^{\boxtimes_n t}=\left(\mu ^{\boxtimes_m t}\putimes \delta_c\right)\boxtimes  \delta_{cu}$, which implies the equivalence between (1) and (2). 
\end{proof}
Also the following properties are useful.  
\begin{enumerate}[\rm(1)]
\item For any $\mu \in \id (\putimes; \tor)_0, m,n \in \mathbb{Z},  t\geq 1$, the measure $\left(\mu ^{\boxtimes_m t}\right)^{\boxtimes_n 1/ t} \in\id (\putimes; \tor)_0$ exists and equals 
$\mu\boxtimes\delta_c$ for some $c\in\tor$. 
\item For any $\mu \in \id (\putimes; \tor)_0, m,n \in \mathbb{Z},  t >0$, the measure $\left(\mu ^{\hutimes_m t}\right)^{\hutimes_n 1/t} \in\id (\putimes; \tor)_0$ exists and equals $\mu\putimes\delta_c$ for some $c\in\tor$.
\end{enumerate}

Let us go to the following which clearly implies Proposition \ref{di}. 
\begin{prop}\label{image}
$\mathbb{M}_t^{(n)}(\id (\putimes; \tor))=\mathbb{M}_t^{(0)}(\id (\putimes; \tor))$ for any $n$ and $t >0$.  
\end{prop}
\begin{proof}
Suppose $n \in \mathbb{Z}$, $t >0$ and $\mu \in \id(\putimes; \tor)_0$ and define $n':= (t+1)\arg_{(n)} m_1(\mu)$ with the notation that $\arg_{(n)} m_1(\mu)$ is the argument in $I_n$. Let $\nu \in \id(\putimes; \tor)_0$ and $c \in \tor$ be defined by 
$$
\nu:= \mathbb{M}^{(n)}_t(\mu)^{\hutimes_0(t+1)}= \left( \left(\mu ^{\boxtimes_n (t+1)}\right)^{\hutimes_{n'} 1/(t+1)}\right)^{\hutimes_0(t+1)} = \mu^{\boxtimes_n (t+1)} \putimes \delta_c. 
$$
From Lemma \ref{ap}, the measure $\nu^{\boxtimes_m 1/(t+1)}=\left(\mu ^{\boxtimes_n (t+1)} \putimes \delta_c \right)^{\boxtimes_m 1/(t+1)}$ exists in $\id(\putimes; \tor)_0$, where $m:= [(t+1) \arg_{(0)}m_1(\mu)]$. Note that $\arg_{(m)} m_1(\nu) \in I_m$ and $\frac{1}{t+1} \arg_{(m)} m_1(\nu) \in I_0$. Therefore
$$
 \mathbb{M}^{(0)}_t(\nu^{\boxtimes_m 1/(t+1)}) =  \nu^{\hutimes_m 1/(t+1)} = \mathbb{M}^{(n)}_t(\mu),  
$$
implying that $\mathbb{M}^{(0)}_t(\id(\putimes; \tor)_0) \supset \mathbb{M}^{(n)}_t(\id(\putimes; \tor)_0)$. The converse inclusion  is similar. 
\end{proof}

\section*{Acknowledgement}
The authors express sincere thanks to Professor Marek Bo\.zejko for informing them his conjecture and would also like to thank Dr.\ Serban Belinschi for useful discussions regarding this paper. They are grateful to Erwin Schr\"odinger Institute (ESI) in Vienna because part of this work was done during the authors' stay at ESI, within the program "Bialgebras in Free Probability" in February 2011. Professor V\'ictor P\'erez-Abreu made useful comments to improve the organization of this paper.

\end{document}